\documentclass[]{article}

\usepackage[utf8]{inputenc}
\usepackage{tikz-cd}
\usepackage{amsmath}
\usepackage{enumerate}
\usepackage{amssymb}
\usepackage{enumitem}
\usepackage{geometry}
\usepackage{tikz}
\usepackage{mathtools}
\usetikzlibrary{decorations.pathmorphing}
\usepackage{bm}
\usepackage{extarrows}
\usepackage{stackengine,scalerel,graphicx,trimclip,wasysym}
\usepackage{graphicx}
\usepackage{amsthm}
\usepackage{ mathdots }
\usepackage{imakeidx}
\usepackage[colorlinks=false,pdfborder={0 0 1},urlbordercolor=red]{hyperref}
\usepackage{siunitx}

\usepackage{titlesec}

\titleformat{\section}
{\centering\normalfont\Large\bfseries} 
{\thesection}{1em}{}                   

\titleformat{\subsubsection}[block]
{\centering\normalfont\normalsize\bfseries} 
{\thesubsubsection}{1em}{}                  

\newtheorem{theorem}{Theorem}[section]
\newtheorem{proposition}[theorem]{Proposition}
\newtheorem{lemma}[theorem]{Lemma}
\newtheorem{corollary}[theorem]{Corollary}

\theoremstyle{definition}

\newtheorem{example}[theorem]{Example}
\newtheorem{examples}[theorem]{Examples}
\newtheorem{nonexample}[theorem]{Non-Example}
\newtheorem{definition}[theorem]{Definition}

\newtheorem{remark}[theorem]{Remark}

\def\A{\mathcal{A}}
\def\B{\mathcal{B}}
\def\C{\mathcal{C}}

\def\R{\mathbb{R}}
\def\Z{\mathbb{Z}}
\newcommand{\floor}[1]{\left\lfloor #1 \right\rfloor}
\newcommand{\ceil}[1]{\left\lceil #1 \right\rceil}

\def\ends{\mathcal{E}\text{nds}}
\newcommand{\suchthat}{\;\ifnum\currentgrouptype=16 \middle\fi|\;}

\tikzcdset{squiggly/.default={pre length=5pt, post length=5pt}}

\newcommand\rightarrowhead{\clipbox{4pt -2pt 0pt -2pt}{$\rightarrow$}}

\newcommand\rightsquigarrowbody{\clipbox{-2.5pt 0pt 3pt 0pt}{$\rightsquigarrow$}}

\newlength{\myrightsquigarrowshift}
\setlength\myrightsquigarrowshift{1pt}

\newcommand\myrightsquigarrow{
	\hspace{-\myrightsquigarrowshift}%
	\rightsquigarrowbody\rightarrowhead%
	\hspace{\myrightsquigarrowshift}%
}

\tikzset{
	Rightsquigarrow/.style={
		draw,
		>={Implies[]},
		->,
		double distance between line centers=1.5pt,
		decorate,
		decoration={
			zigzag,
			amplitude=0.7pt,
			segment length=3pt,
			pre length=5pt,
			post length=5pt
		}
	}
}

%opening
\title{Interactions between Coarse Homotopy and Ends on Proper Geodesic Spaces}
\author{Bradley Ashley}

\begin{document}

\maketitle

\begin{abstract}
	We consider the coarse-geometric notion of ends in the context of coarse homotopy. We show that, when recontextualized as a functor from an appropriate coarse category of proper geodesic spaces, the set of ends $\ends(-)$ is a coarse homotopy invariant. Further, we prove the existence of a natural surjection from the coarse path component functor $\pi_0^{\text{Crs}}(-)$ to $\ends(-)$, and show that in general, this is not an injection (even when restricted to locally finite planar graphs). Finally, we begin to consider when this injection indeed exists by showing that this is the case for locally finite geometric trees, providing a number of useful preliminary lemmas on the behaviour of geodesics in this context.	
\end{abstract}

\section{Introduction}
    
    The set of ends of a metric space, introduced by Freudenthal \cite{freudenthal1930enden}, is a tool in the study of asymptotic geometry. This aims to count the number of ways an observer may walk off to infinity in a given space, up to unbounded continuous path-connectedness. The cardinality of the set of ends of a given space is a well-known quasi-isometry invariant (see Proposition 8.29 of \cite{bridson2013metric} for example). A common example given is that $\R^n$ has two ends for $n=1$ and one end for $n\geq 2$.
    
    This invariant has many applications, most notably in the field of geometric group theory. For example the combined results of Hopf \cite{hopf1943enden} and Stallings \cite{stallings1968torsion} tell us that the Cayley graph of a finitely-generated group, viewed as a metric space via the canonical path metric, can have only zero, one, two,  or uncountably many ends, and further that there are concrete algebraic classifiers for groups with $0,1$ or infinitely many ends. When in the view of the Švarc–Milnor Lemma (see \cite{bridson2013metric} for details), we get a useful obstruction for the existence of a particularly nice form of group action on a metric space for groups whose number of ends lies outside these cardinalities.
    
    The set of coarse path components is a new idea, introduced by Mitchener, Norouzizadeh, and Schick \cite{mitchener2020coarse} where they develop a coarse homotopy theory. This is a tool which aims to count a space's coarsely connected components at infinity through the construction of points and paths `at infinity', a value invariant under their given definition of coarse homotopy equivalence. Among other examples, they show that $\R^n$ has two coarse path components for $n=1$, and one coarse path component for $n\geq 2$.
    
    There is intuitively some overlap here where in principle, both invariants appear to be designed to assess the same property of a given space, i.e. the number of connected infinities. In this paper, we explore this connection.
    
    We begin with an overview of large-scale geometry, including the equivalences of interest in quasi-isometries and coarse equivalences, noting the important example of quasi-geodesic spaces where these two notions align.
    
    This is followed by a section recalling the definition of the set of ends of a space. We provide some results and recall others useful for their computation, specifically when we restrict to the world of proper geodesic metric spaces. We recontextualise the set of ends as a functorial construction from the category of proper geodesic space and coarse maps $\textbf{PGeo}^{\text{Crs}}$ to the category of sets, and provide a proof showing that this `ends functor' maps large-scale equivalent morphisms to equivalent functions on respective sets of ends.
    
    In the proceeding two sections we cover Roe's definition of abstract coarse spaces and coarse maps and then Mitchener, Norouzizadeh, and Schick's notion of coarse homotopy \cite{mitchener2020coarse}. We make explicit some likely known but often left implicit results in these contexts that are useful for the work that follows.
    
    The sixth section covers the notion of coarse path components, also introduced in \cite{mitchener2020coarse}. We expand on their work by showing that the two canonical formulations define the same invariant. These formulations are analogous to those in the topological world where $\pi_0(X)$, for some space $X$, may be defined as the set $X$ with path-connected points identified, or equivalently as the set of homotopy classes of maps $*\to X$. In our context, points are coarse rays, and paths are mappings of the form $c_1([0,1]) \to X$, where $c_1(-)$ is the truncated metric cone defined in this section. We construct this coarse homotopy invariant functorially. Further, we show that for a proper geodesic metric space, every coarse path component may be represented by a rooted Lipschitz ray.
    
   	The following section is where the main work begins. We first show that for the metric cone of an appropriate space there is a canonical bijection between the set of ends and set of coarse path components. We then also show that there is a natural surjection from the `coarse path components functor $\pi_0^{\text{Crs}}(-)$ to the `ends functor' $\ends(-)$ when restricted to the category of proper geodesic metric spaces and coarse maps $\textbf{PGeo}^{\text{Crs}}$. An immediate corollary is then that, in this context, the notion of ends is a coarse homotopy invariant in the functorial sense. 
	
	With this natural surjection, it is then natural to ask whether or not it is also an injection, i.e., whether the invariants of coarse path components and ends align. We compute an example showing that this is not the case for general proper geodesic metric spaces (and in fact further for locally finite planar graphs equipped with the path metric). We would then like to begin considering when this injection might in fact exist. The remainder of the paper is devoted to showing that this is the case for locally finite geometric trees.
	
	Section 8 contains a small interlude providing some technical lemmas relating to the behaviour of geodesics on locally finite geometric trees (viewed as $1$-complexes equipped with the canonical path metric), where then, in the ninth and final section, we use these results to show that there is a natural injection (and hence isomorphism) from $\pi_0^{\text{Crs}}(-)$ to $\ends(-)$ when restricted to the class of locally finite geometric trees.

%%%%%%%%%%%%%%%%%%%%%%%%%%%%%%%%%%%%%%%%%%%%%%%%%%%%%%%%%%%%%%%%%%%%%%%%%%%%%%%%%%%%%%%%%%%%%%%%%%%%%%%%%%%%%%%%%%%%%%%%%%%%%%%%%%%%%%%%%%%%%%%%%%%%%%%%%%%%%%%%%%%%%%%%%%%%%%%%%%%%%%%%%%%%%%%%%%%%%%%%%%%%%%%%%%%%%%%%%%%%%%%%%%%%%%%%%%%%%%%%%%%%%%%%%%%%%%%%%%%%%%%%%%%%%%%%%%%%%%%%%%%%%%%%%%%%%%%%%%%%%%%%%%%%%%%%%%%%%%%%%%%%%%%%%%%%%%%%%%%%%%%%%%%%%%%%%%%%%%%%%%%%%%%%%%%%%%%%%%%%%%%%%%%%%%%%%%%%%%%%%%%%%%%%%%%%%%%%%%%%%%%%%%%%%%%%%%%%%%%%%%%%%%%%%%%%%%%%%%%%%%%%%%%%%%%%%%%%%%%%%

\section{Large-Scale Geometry}

In this section we cover some large-scale geometry preliminaries which we will need for the work that follows. We will denote a metric space by the pair $(X,d_X)$ except in some cases where the distance function has its own specific notation, e.g., $|-|$ for the standard metric on $\R_{\geq 0}$. When the context is clear we will just write $X$. For any point $x\in X$ and $R>0$ we will denote the open ball centred at $x$ of radius $R$ by $OB(x,R)$, i.e.,
\begin{equation*}
	OB(x,R) \coloneq \{x'\in X\ |\ d_x(x,x')<R\},
\end{equation*}
and similarly denote the closed ball centred at $x$ of radius $R$ by
\begin{equation*}
	CB(x,R) \coloneq \{x'\in X\ |\ d_x(x,x')\leq R\}.
\end{equation*}
Further, if $B\subseteq X$ is a bounded subset, we will denote its diameter by diam$(B)$, that is
\begin{equation*}
	\text{diam}(B) = \sup_{x,x'\in B}d_X(x,x').
\end{equation*}
The following is a common definition (see 2.5.1 and 2.5.2 of \cite{dructu2018geometric} for example).

\begin{definition}
	Let $X$ and $Y$ be metric spaces. A function $f:X \to Y$ is 
	\begin{enumerate}
		\item \textbf{$A$-Lipschitz} if there exists some $A\geq 0$ such that
		\begin{equation*}
			d_Y(f(x),f(x')) \leq A d_X(x,x'),
		\end{equation*}
		for all $x,x' \in X$.
		\item an \textbf{$A$-biLipschitz embedding} if there exists some $A\geq 1$ such that
		\begin{equation*}
			\frac{1}{A} d_X(x,x') \leq  d_Y(f(x),f(x')) \leq A d_X(x,x'),
		\end{equation*}
		for all $x,x' \in X$.
		\item an \textbf{$A$-biLipschitz equivalence} if $f$ is a surjective $A$-biLipschitz embedding.
	\end{enumerate}
\end{definition}

This is unfortunately too strong a definition to properly study large-scale structure. As such, we have the following common weakening (often termed a `coarsification') of the above definition.

\begin{definition}\label{quasi-iso-dfn}
	Let $X$ and $Y$ be metric spaces. Then a function $f:X \to Y$ is
	\begin{enumerate}
		\item \textbf{$(A,B)$-asymptotically Lipschitz} if there exists some $A,B \geq 0$ such that
		\begin{equation*}
			d_Y(f(x),f(x')) \leq A d_X(x,x') +B,
		\end{equation*}
		for all $x,x' \in X$.
		\item an \textbf{$(A,B)$-quasi-isometric embedding} if there exists some $A\geq 1$ and $B \geq 0$ such that
		\begin{equation*}
			\frac{1}{A}d_X(x,x') -B \leq d_Y(f(x),f(x')) \leq A d_X(x,x') +B,
		\end{equation*}
		for all $x,x' \in X$.
		\item a \textbf{$C$-quasi-surjection} if there exists some $C \geq 0$ such that for each $y\in Y$ we have
		\begin{equation*}
			d_Y(f(x),y) \leq C,
		\end{equation*}
		for some $x\in X$.
		\item an \textbf{$(A,B,C)$-quasi-isometry} if $f$ is both an $(A,B)$-quasi-isometric embedding and a $C$-quasi-surjection.
	\end{enumerate}
\end{definition}

We will say a map $f:X\to Y$ is \textbf{asymptotically Lipschitz} if $f$ is $(A,B)$-asymptotically Lipschitz for some $A,B$, a \textbf{quasi-isometric embedding} if $f$ is a $(A,B)$-quasi-isometric embedding for some $A,B$, a \textbf{quasi-surjection} if $f$ is a $C$-quasi-surjection for some $C$, and a \textbf{quasi-isometry} if $f$ is a $(A,B,C)$-quasi-isometry for some $A,B,C$. We similarly drop the constant $A$ for Lipschitz maps and biLipschitz embeddings and equivalences.

\begin{example}\label{crs_maps_basic_examples}
	For each $a\in \R$ the map $\R \to \R$ defined by $r \mapsto ar$ is asymptotically Lipschitz, and proper if $a\neq 0$. Perhaps more interestingly, the floor function $\floor{-}:\R \to \Z$ is an example of a quasi-isometry that is not continuous.
\end{example}

Asymptotically Lipschitz maps and quasi-isometries are key in the study of large-scale geometry, and more specifically geometric group theory (see chapters 8 and 9 of \cite{dructu2018geometric} for example). They are not, however, the only sort of `large-scale' morphisms studied. In this paper we will be more concerned with the coarse equivalence.

\begin{definition}\label{metric_coarse_map_dfn}
	Let $(X,d_X),(Y,d_Y)$ be metric spaces. Then a map $f: X \to Y$ is
	\begin{enumerate}
		\item \textbf{controlled} if for all $R>0$ there exists $S(R)>0$ such that for all $x,x' \in X$
		$$d_X(x,x')<R$$
		implies
		$$d_Y(f(x),f(x'))<S(R),$$
		\item \textbf{metrically proper} if for any bounded set $B\subseteq Y$, $f^{-1}(B)$ is bounded in $X$, and
		\item \textbf{coarse} if both controlled and metrically proper.
	\end{enumerate}
\end{definition}

We will often refer to a metrically proper map as just \textbf{proper}. The exception to this will be when we are considering \textbf{topologically proper} maps which are defined the same as above except with compact subsets rather than bounded. More details on this are in the upcoming subsection on Ends. 

\begin{example}
	The map $\R^2 \to \R$ defined by $(r,r')\mapsto r+r'$ is controlled, and coarse when restricted to $\R_{\geq 0}^2$.
\end{example}

\begin{nonexample}
	The map $\R_{\geq 0}\to \R_{\geq 0}$ defined by $x\mapsto x^2$, for all $x$, fails to be controlled, and the constant map $\R_{\geq 0} \to \R_{\geq 0}$ defined by $x \mapsto 0$, for all $x$, fails to be proper.
\end{nonexample}

\begin{definition}\label{close_metric}
	Let $X,Y$ be metric spaces and let $f,g: X \to Y$ be functions. Then \textbf{$f$ is close to $g$} if there exists some $C \geq 0$ such that 
	\begin{equation*}
		d_Y(f(x),g(x)) \leq C
	\end{equation*}
	for all $x\in X$.
\end{definition}

\begin{definition}\label{metric_crs_equiv_dfn}
	Let $X$ and $Y$ be metric spaces and $f: X \to Y$ a controlled map. Then $f$ is a \textbf{coarse equivalence} if there exists a controlled map $g:Y\to X$ such that $gf$ is close to $1_{X}$ and $fg$ is close to $1_Y$, denoted $X \overset{\text{Crs}}{\simeq} Y$.
\end{definition}

\begin{remark}\label{crdeqiscrseq}
	It was shown in \cite{mohamad2013coarse} Proposition 1.4.4 that any coarse equivalence is necessarily proper.
\end{remark}

\begin{example}
	Let $\Gamma$ be a finitely-generated group and let $S$ be a choice of finite generating set. Recall the definitions of the word metric space on $\Gamma$ with respect to $S$ denoted $|\Gamma|_S$, and the Cayley graph of $\Gamma$ with respect to $S$ denoted $\text{Cay}(\Gamma,S)$, equipped with the canonical path metric. Both of these constructions can be found in \cite{dructu2018geometric}.
	
	Suppose $T$ is another choice of generating set for $\Gamma$. Then there is are coarse equivalences between metric spaces
	\begin{equation*}
		\text{Cay}(\Gamma,S) \overset{\text{Crs}}{\simeq} |\Gamma|_S \overset{\text{Crs}}{\simeq} |\Gamma|_T \overset{\text{Crs}}{\simeq} \text{Cay}(\Gamma,T).
	\end{equation*}
\end{example}

The above example is not surprising, it is well known that there are quasi-isometries above \cite{dructu2018geometric}. It turns out that in many cases, the notions of coarse equivalence and quasi-isometry are the same. This can be useful as it can often be easier to prove the existence of the former.

\begin{definition}
	Let $X$ be a metric space and $x,x'$ be points in $X$. Then a \textbf{geodesic segment from $x$ to $x'$}, or just a \textbf{geodesic from $x$ to $x'$}, is an isometric embedding $u_{x,x'}:[0,a_{x,x'}] \to X$ such that $u_{x,x'}(0) = x$, $u_{x,x'}(a_{x,x'}) = x'$ and $a_{x,x'}=d_X(x,x')$.
	
	We call the a $X$ \textbf{geodesic} if any pair of points in $X$ may be joined by a geodesic. Further, $X$ is called \textbf{uniquely geodesic} if there is precisely one geodesic connecting any pair of points.
\end{definition}

\begin{proposition}
	Let $X$ be a geodesic metric space, and $x,x',x''$ be points in $X$ such that we have $x'\in \text{im}(u_{x,x''})$. Then the map $u_{x,x''}:[0,(u_{x,x''})^{-1}(x')] \to X$ is a geodesic from $x$ to $x'$, and the map $u_{x,x''}:[(u_{x,x''})^{-1}(x'),a_{x,x''}] \to X$ is a geodesic from $x'$ to $x''$.
\end{proposition}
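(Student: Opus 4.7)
The plan is to unpack the definition of a geodesic and verify its three clauses for each restriction. Write $t := (u_{x,x''})^{-1}(x')$; this is a well-defined single point because $u_{x,x''}$ is an isometric embedding and hence injective. Since $u_{x,x''}$ sends $0$ to $x$ and $a_{x,x''}$ to $x''$, we have $t \in [0, a_{x,x''}]$, and in fact $t = d_X(x,x')$ by applying the isometry property to the pair $0, t$.

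For the first assertion, I would simply observe that the restriction of an isometric embedding to a subinterval is again an isometric embedding. Thus $u_{x,x''}|_{[0,t]} : [0,t] \to X$ is an isometric embedding sending $0 \mapsto x$ and $t \mapsto x'$, and as noted $t = d_X(x,x')$, so all three defining clauses of the notion of geodesic are satisfied verbatim.

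For the second assertion the same strategy works, but the parameter interval $[t, a_{x,x''}]$ does not start at $0$, so strictly according to the stated definition a small reparametrization is needed. Namely, define $v : [0, a_{x,x''} - t] \to X$ by $v(s) := u_{x,x''}(s+t)$; this is an isometric embedding (translation precomposed with the restriction of an isometric embedding) with $v(0) = x'$ and $v(a_{x,x''} - t) = x''$. Applying the isometry property of $u_{x,x''}$ to the pair $t, a_{x,x''}$ gives $d_X(x',x'') = a_{x,x''} - t$, which matches the length of the new domain interval. Thus $v$ satisfies the three clauses of the definition, and one reads off the original statement by identifying $v$ with the restriction $u_{x,x''}|_{[t, a_{x,x''}]}$ up to this translation.

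There is no substantial obstacle: the only subtlety is the mild reparametrization in the second half, and the only fact being used is the elementary observation that any restriction (and translation) of an isometric embedding is an isometric embedding. The triangle inequality is not strictly needed, since the values of $d_X(x,x')$ and $d_X(x',x'')$ can be read directly off the isometry property of $u_{x,x''}$ applied to the pairs $(0,t)$ and $(t, a_{x,x''})$ respectively.
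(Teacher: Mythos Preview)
Your proof is correct and is essentially what the paper has in mind: the paper's own proof reads simply ``Straightforward,'' and your argument is precisely the routine unpacking of the definition that this word is standing in for. The only remark is that the paper evidently allows geodesic segments to be parametrized on intervals not starting at $0$ (as the statement itself does), so your reparametrization step, while harmless, is not even strictly required.
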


\begin{proof}
	Straightforward.
\end{proof}

Naturally, the notion of geodesics has its own \textit{coarsification}.

\begin{definition}
	Let $X$ be a metric space and $x,x'$ be points in $X$. Then a \textbf{$(\lambda,\epsilon)$-quasi-geodesic segment from $x$ to $x'$}, or just a \textbf{quasi-geodesic from $x$ to $x'$}, is a $(\lambda,\epsilon)$-quasi-isometric embedding $u_{x,x'}^q : \big[0,a^q_{x,x'}\big] \to X$ for some $a^q_{x,x'}\geq 0$, such that $u_{x,x'}^q(0) = x$ and $u_{x,x'}^q\big(a_{x,x'}^q\big) = x'$.
	
	We call a metric space $X$ \textbf{quasi-geodesic} if there exists constants $\lambda \geq 1$ and $\epsilon \geq 0$ such that any pair of points in $X$ may be joined by a quasi-geodesic.
\end{definition}

\begin{proposition}
	Every geodesic is a quasi-geodesic, and therefore every geodesic space is a quasi-geodesic space.
\end{proposition}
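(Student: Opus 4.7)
The plan is to take $\lambda = 1$ and $\epsilon = 0$ and observe that an isometric embedding is in particular a $(1,0)$-quasi-isometric embedding. More concretely, given a geodesic $u_{x,x'}:[0,a_{x,x'}] \to X$ from $x$ to $x'$, by Definition \ref{quasi-iso-dfn}(2) I need to verify that
\begin{equation*}
\tfrac{1}{1}|s-t| - 0 \leq d_X(u_{x,x'}(s), u_{x,x'}(t)) \leq 1 \cdot |s-t| + 0
\end{equation*}
for all $s,t \in [0,a_{x,x'}]$. But since $u_{x,x'}$ is an isometric embedding, both inequalities collapse to the equality $d_X(u_{x,x'}(s), u_{x,x'}(t)) = |s-t|$, which holds by definition.

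Setting $a^q_{x,x'} \coloneq a_{x,x'}$, the endpoint conditions $u^q_{x,x'}(0) = x$ and $u^q_{x,x'}(a^q_{x,x'}) = x'$ match those already satisfied by $u_{x,x'}$. Hence every geodesic qualifies as a $(1,0)$-quasi-geodesic between its endpoints.

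The second statement then follows immediately from the first: if $X$ is a geodesic metric space, then any pair of points $x,x' \in X$ admits a geodesic joining them, and by the previous paragraph this geodesic is a $(1,0)$-quasi-geodesic. Taking uniform constants $\lambda = 1$ and $\epsilon = 0$, independent of the chosen pair, shows $X$ satisfies the definition of a quasi-geodesic metric space. There is no real obstacle here; the whole content of the proposition is that the quasi-isometric embedding definition specialises at $(\lambda,\epsilon) = (1,0)$ to the isometric embedding definition.
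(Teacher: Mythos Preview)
Your proof is correct and follows exactly the same approach as the paper: set $\lambda = 1$, $\epsilon = 0$, and $a^q_{x,x'} = a_{x,x'} = d_X(x,x')$, so that the isometric embedding condition becomes the $(1,0)$-quasi-isometric embedding condition. The paper's proof is the one-line version of what you wrote out in detail.
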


\begin{proof}
	Simply set $\lambda =1, \epsilon = 0$, and $a_{x,x'} = d_X(x,x')$, and we are done.
\end{proof}

We will show that for the class of quasi-geodesic spaces, the notions of quasi-isometry and coarse equivalence align.

\begin{proposition}[Proposition 1.3.11, \cite{mohamad2013coarse}]\label{qiiffalcrs}
	Let $X$ and $Y$ be metric spaces. Then a function $f:X \to Y$ is a quasi-isometry if and only if $f$ is asymptotically Lipschitz and there exists another asymptotically Lipschitz map $g: Y \to X$ such that $gf$ is close to $1_X$ and $fg$ is close to $1_Y$. 
\end{proposition}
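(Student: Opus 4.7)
The plan is to prove both implications directly from the defining inequalities, using the quasi-surjectivity to manufacture a quasi-inverse in one direction and the closeness of the composites to supply a lower bound in the other.

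For the forward direction, suppose $f$ is an $(A,B,C)$-quasi-isometry. The upper bound in Definition \ref{quasi-iso-dfn}(2) immediately gives that $f$ is asymptotically Lipschitz. To build $g$, I would use $C$-quasi-surjectivity together with the axiom of choice: for each $y \in Y$ pick some $g(y) \in X$ with $d_Y(f(g(y)),y) \leq C$. Closeness of $fg$ to $1_Y$ is then automatic. For $gf$ close to $1_X$, I plug $f(gf(x))$ and $f(x)$ into the lower bound of the quasi-isometric embedding inequality, noting that the right-hand side is bounded by $C$ by construction, and solve for $d_X(gf(x),x)$. The same style of estimate shows $g$ is asymptotically Lipschitz: apply the triangle inequality in $Y$ to $f(g(y)),y,y',f(g(y'))$ and then feed the resulting bound on $d_Y(f(g(y)),f(g(y')))$ into the lower bound of the embedding inequality to bound $d_X(g(y),g(y'))$ linearly in $d_Y(y,y')$.

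For the reverse direction, assume $f,g$ asymptotically Lipschitz with constants $(A_f,B_f),(A_g,B_g)$, and that $gf,fg$ are close to the identities with constants $C_1,C_2$. The upper half of the quasi-isometric embedding inequality for $f$ is just the asymptotically Lipschitz condition on $f$. The lower half is where the composites do the work: applying the triangle inequality as
\begin{equation*}
    d_X(x,x') \leq d_X(x,gf(x)) + d_X(gf(x),gf(x')) + d_X(gf(x'),x'),
\end{equation*}
the outer two terms are at most $C_1$ and the middle term is at most $A_g d_Y(f(x),f(x')) + B_g$ by the asymptotically Lipschitz condition on $g$. Rearranging gives the desired lower bound. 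Finally, quasi-surjectivity of $f$ follows from $fg$ being close to $1_Y$ with constant $C_2$: every $y$ has the preimage candidate $g(y)$.

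I do not anticipate a serious obstacle here; the whole argument is a bookkeeping exercise in keeping track of the constants $A_f, B_f, A_g, B_g, C_1, C_2$ and combining them into a single triple $(A,B,C)$ at the end (or vice versa). The one small subtlety worth flagging is that constructing the quasi-inverse $g$ in the forward direction requires the axiom of choice, and the asymptotically Lipschitz constants for $g$ depend on $A,B,C$ rather than being transported directly from those of $f$.
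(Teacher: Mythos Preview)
Your argument is correct and is the standard proof of this equivalence. Note, however, that the paper does not actually supply its own proof of this proposition: it is stated as a citation of Proposition~1.3.11 in \cite{mohamad2013coarse} and left unproved in the text, so there is nothing to compare against beyond confirming that your outline is sound (which it is).
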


The following proposition and corollary can be found in \cite{nowak2012large}.

\begin{proposition}\label{aL_implies_crd}
	Let $X$ and $Y$ be metric spaces. Then any asymptotically Lipschitz $f:X \to Y$ is controlled. Further, if $X$ is a quasi-geodesic space, the converse holds also.
\end{proposition}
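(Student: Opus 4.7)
The plan is to treat the two implications separately. For the forward direction, the argument is immediate from the definitions: if $f$ is $(A,B)$-asymptotically Lipschitz and $R>0$, then $d_X(x,x')<R$ forces $d_Y(f(x),f(x'))\leq A\,d_X(x,x')+B < AR+B$, so I would simply set $S(R):=AR+B+1$ (or any similar constant) to witness that $f$ is controlled.

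For the converse, suppose $X$ is $(\lambda,\epsilon)$-quasi-geodesic and $f:X\to Y$ is controlled. My strategy is to discretize a quasi-geodesic joining two given points into unit-parameter steps, apply the controlled bound to each step, and sum via the triangle inequality. Concretely, given $x,x'\in X$, I would pick a $(\lambda,\epsilon)$-quasi-geodesic $u:[0,a]\to X$ from $x$ to $x'$ and sample $u$ at the parameters $t_i=i$ for $0\leq i\leq \ceil{a}-1$, together with $t_{\ceil{a}}=a$. Consecutive parameters differ by at most $1$, so the quasi-isometric embedding inequality gives $d_X(u(t_i),u(t_{i+1}))\leq \lambda+\epsilon$, and the controlled property applied at the fixed scale $R_0:=\lambda+\epsilon+1$ then yields a uniform constant $S:=S(R_0)$ with $d_Y(f(u(t_i)),f(u(t_{i+1})))<S$ for every $i$.

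Summing the triangle inequality over the $\ceil{a}$ segments gives $d_Y(f(x),f(x'))\leq S\ceil{a}$, and I would then convert this into a linear bound in $d_X(x,x')$ using the lower bound $\frac{1}{\lambda}a-\epsilon\leq d_X(x,x')$ from the quasi-isometric embedding. This yields $a\leq \lambda\,d_X(x,x')+\lambda\epsilon$ and hence $\ceil{a}\leq \lambda\,d_X(x,x')+\lambda\epsilon+1$, so substituting gives $d_Y(f(x),f(x'))\leq (S\lambda)\,d_X(x,x')+S(\lambda\epsilon+1)$, showing that $f$ is asymptotically Lipschitz with explicit constants.

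The main obstacle is bookkeeping rather than genuine difficulty: one must remember to use the \emph{lower} (not upper) bound of the quasi-isometric embedding to control $a$ by $d_X(x,x')$, and one must fix the sampling scale independently of the pair $(x,x')$ so that a single constant from the controlled hypothesis suffices for every subsegment. The underlying idea is the standard ``walk along small steps'' technique that is ubiquitous in the large-scale geometry of quasi-geodesic spaces.
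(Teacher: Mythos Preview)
Your proposal is correct; both implications are handled cleanly and the bookkeeping is right. Note that the paper does not actually supply its own proof of this proposition---it simply cites \cite{nowak2012large}---and your argument is precisely the standard ``subdivide a quasi-geodesic into bounded-length steps'' proof one finds in that reference, so there is nothing to compare.
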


\begin{corollary}\label{qiiffcrseq}
	Let $X$ and $Y$ metric spaces, and $f:X \to Y$ be a function. Then if $f$ is a quasi-isometry then $f$ is a coarse equivalence. Further, if $X$ and $Y$ are quasi-geodesic spaces, the converse holds also.
\end{corollary}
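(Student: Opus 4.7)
The plan is to chain together the two immediately preceding results, Proposition \ref{qiiffalcrs} and Proposition \ref{aL_implies_crd}, so there is essentially no new content to produce.

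For the forward direction I would assume $f:X\to Y$ is a quasi-isometry and invoke Proposition \ref{qiiffalcrs} to obtain an asymptotically Lipschitz $f$ together with an asymptotically Lipschitz $g:Y\to X$ such that $gf$ is close to $1_X$ and $fg$ is close to $1_Y$. Proposition \ref{aL_implies_crd} then upgrades "asymptotically Lipschitz" to "controlled" for both $f$ and $g$, which is precisely the condition in Definition \ref{metric_crs_equiv_dfn} for $f$ to be a coarse equivalence. Note that no quasi-geodesic hypothesis is needed here, since we are only using the easy implication of Proposition \ref{aL_implies_crd}.

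For the converse I would add the hypotheses that $X$ and $Y$ are quasi-geodesic, and assume $f$ is a coarse equivalence, giving a controlled $g:Y\to X$ with $gf$ close to $1_X$ and $fg$ close to $1_Y$. Applying the second (harder) half of Proposition \ref{aL_implies_crd} to $f:X\to Y$ uses that $X$ is quasi-geodesic to conclude $f$ is asymptotically Lipschitz; applying the same half to $g:Y\to X$ uses that $Y$ is quasi-geodesic to conclude $g$ is asymptotically Lipschitz. Now Proposition \ref{qiiffalcrs} delivers that $f$ is a quasi-isometry.

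There is no real obstacle: the result is a clean assembly of the two cited propositions. The only point worth flagging is that the converse truly requires \emph{both} $X$ and $Y$ to be quasi-geodesic, because Proposition \ref{aL_implies_crd} must be applied in each direction, to $f$ and to its quasi-inverse $g$; dropping the hypothesis on either space breaks the symmetric application.
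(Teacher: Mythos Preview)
Your proposal is correct and is exactly the intended argument: the corollary is a direct combination of Proposition~\ref{qiiffalcrs} and Proposition~\ref{aL_implies_crd}, applied in each direction as you describe. The paper itself does not spell out a proof here but simply attributes the result to \cite{nowak2012large}; your write-up faithfully reconstructs that omitted argument, including the observation that the quasi-geodesic hypothesis on both $X$ and $Y$ is needed precisely so that Proposition~\ref{aL_implies_crd} can be applied to $f$ and to its coarse inverse $g$ separately.
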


\begin{definition}
	Let $X$ be a metric space and $r>0$. Then $X$ is $r$-discrete if for all distinct elements $x,x' \in X$ we have $d_X(x,x')\geq r$.
\end{definition}

\begin{proposition}\label{discretealislip}
	Let $X$ and $Y$ be metric spaces such that $X$ is $r$-discrete for some $r>0$. Then any asymptotically Lipschitz map $f:X \to Y$ is Lipschitz.
\end{proposition}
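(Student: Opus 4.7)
The plan is to take the asymptotic Lipschitz constants $(A,B)$ for $f$ and absorb the additive constant $B$ into the multiplicative term using the discreteness of $X$. Since we want to produce a single multiplicative constant $A'$ with $d_Y(f(x),f(x')) \leq A' d_X(x,x')$ for all $x,x' \in X$, the only obstruction is when $d_X(x,x')$ is small compared to $B$; the $r$-discreteness hypothesis precisely rules this out for distinct points.

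Concretely, I would first dispense with the case $x=x'$, where both sides vanish trivially. Then, assuming $x \neq x'$, I would invoke $r$-discreteness to assert $d_X(x,x') \geq r$, which rearranges to $1 \leq \frac{1}{r} d_X(x,x')$, and hence $B \leq \frac{B}{r} d_X(x,x')$. Substituting this bound into the asymptotically Lipschitz inequality $d_Y(f(x),f(x')) \leq A d_X(x,x') + B$ yields
\begin{equation*}
    d_Y(f(x),f(x')) \leq A d_X(x,x') + \frac{B}{r} d_X(x,x') = \left(A + \frac{B}{r}\right) d_X(x,x'),
\end{equation*}
so $f$ is $\left(A + \tfrac{B}{r}\right)$-Lipschitz.

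There isn't really a hard step here; the proof is a one-line computation once the case split on $x = x'$ versus $x \neq x'$ is made. The only conceptual subtlety is noticing that the inequality for $x = x'$ has to be handled separately because the estimate $B \leq \frac{B}{r} d_X(x,x')$ fails when $d_X(x,x') = 0$ (unless $B=0$, in which case there's nothing to do anyway).
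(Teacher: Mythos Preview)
Your proof is correct and follows essentially the same approach as the paper: both split off the trivial $x=x'$ case and then, for distinct points, use $d_X(x,x')\geq r$ to bound $B \leq \tfrac{B}{r} d_X(x,x')$ and absorb the additive constant into the multiplicative one, arriving at the Lipschitz constant $A + \tfrac{B}{r}$.
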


\begin{proof}
	For all distinct elements $x,x' \in X$, we have
	\begin{align*}
		d_X(x,x')\geq r \iff \frac{1}{r} d_X(x,x') \geq 1.
	\end{align*}
	Suppose $f$ is $(A,B)$-asymptotically Lipschitz. Then for each pair of distinct elements $x,x' \in X$ we have
	\begin{align*}
		d_Y(f(x),f(x')) & \leq A d_X(x,x') + B\\
		& \leq Ad_X(x,x') + B \frac{1}{r} d_X(x,x')\\
		& = \left(A + \frac{B}{r}\right)d_X(x,x'),
	\end{align*} 
	and so $f$ is $\left(A + \frac{B}{r}\right)$-Lipschitz.
\end{proof}

\begin{corollary}\label{discrete_crs_is_lip}
	Let $X$ be a quasi-geodesic metric space and $Y$ a metric space, and $A\subseteq X$ a $r$-discrete subset for some $r>0$. Then for any coarse map $f:X \to Y$ we have that $f|_A$ is Lipschitz.
\end{corollary}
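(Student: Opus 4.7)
The plan is to chain together the three results immediately preceding the corollary. Specifically, I would first apply Proposition \ref{aL_implies_crd} to upgrade the hypothesis that $f$ is coarse (hence controlled) on the quasi-geodesic space $X$ to the conclusion that $f$ is asymptotically Lipschitz, say with constants $(A,B)$.

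Next, I would observe that asymptotic Lipschitzness is inherited by restrictions: since the inequality $d_Y(f(x),f(x')) \leq A d_X(x,x') + B$ holds for all $x,x' \in X$, it holds in particular for all $x,x' \in A$ with the subspace metric, so $f|_A : A \to Y$ is $(A,B)$-asymptotically Lipschitz. This step is essentially bookkeeping but worth stating explicitly since $A$ need not itself be quasi-geodesic.

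Finally, since $A$ is $r$-discrete as a metric space in its own right, Proposition \ref{discretealislip} applies directly to $f|_A$, yielding that $f|_A$ is $\left(A + \tfrac{B}{r}\right)$-Lipschitz.

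There is no serious obstacle here; the only minor subtlety is making sure the quasi-geodesic hypothesis is used on $X$ (to invoke the converse direction of Proposition \ref{aL_implies_crd}) rather than on $A$, which need not inherit any geodesic-like structure from being a subset. The discreteness hypothesis, by contrast, is used only on $A$.
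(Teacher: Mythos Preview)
Your proof is correct and follows exactly the same route as the paper: invoke Proposition \ref{aL_implies_crd} on $X$ to get that $f$ (and hence $f|_A$) is asymptotically Lipschitz, then apply Proposition \ref{discretealislip} using the $r$-discreteness of $A$. Your added remarks about the explicit constant $A + B/r$ and the fact that the quasi-geodesic hypothesis is needed on $X$ rather than on $A$ are sound and make the argument more transparent, but the paper's own proof is essentially the same two-line chain.
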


\begin{proof}
	We have that $f$, and therefore $f|_A$, is asymptotically Lipschitz by Proposition \ref{aL_implies_crd} and therefore $f|_A$ is Lipschitz by Proposition \ref{discretealislip}.
\end{proof}

%%%%%%%%%%%%%%%%%%%%%%%%%%%%%%%%%%%%%%%%%%%%%%%%%%%%%%%%%%%%%%%%%%%%%%%%%%%%%%%%%%%%%%%%%%%%%%%%%%%%%%%%%%%%%%%%%%%%%%%%%%%%%%%%%%%%%%%%%%%%%%%%%%%%%%%%%%%%%%%%%%%%%%%%%%%%%%%%%%%%%%%%%%%%%%%%%%%%%%%%%%%%%%%%%%%%%%%%%%%%%%%%%%%%%%%%%%%%%%%%%%%%%%%%%%%%%%%%%%%%%%%%%%%%%%%%%%%%%%%%%%%%%%%%%%%%%%%%%

\section{Ends}

In this section, we recall the definition of the set of ends of a space, a tool useful in the study of large-scale geometry, notably in the field of geometric group theory \cite{dructu2018geometric}. Here, we count the number of distinct infinities of a given space. In order to give the definition, we first need to consider different but related notion of a proper function.

\begin{definition}
	Let $X$ and $Y$ be metric spaces. Then we will call a map $f:X \to Y$ \textbf{topologically proper} if for all compact $K\subseteq Y$ we have that $f^{-1}[K]$ is compact in $X$.
\end{definition}

For a metric space $X$ we will often refer to function $r:\R_{\geq 0} \to X$ of metric spaces as a \textbf{ray} or a \textbf{ray in $X$}, most often with preceding adjectives. The key adjectives being coarse, topologically proper, metrically proper, continuous, Lipschitz, asymptotically Lipschitz, and geodesic, where the latter is defined to be an isometric embedding of $\R_{\geq 0}$. We will also call a ray $x_0$\textbf{-rooted}, for some $x_0\in X$, if $r(0)=x_0$. If the context is clear, we will often refer to a ray $r:\R_{\geq 0} \to X$ by $r$.

The following is a common definition.

\begin{definition}[Definition 8.27, \cite{bridson2013metric}]
	Let $X$ be a metric space. Let $r,r':\R_{\geq 0} \to X$ be topologically proper continuous rays. We will declare them equivalent if for all compact $K\subseteq X$ there exists some $t\geq 0$ such that $\text{im}\big(r|_{[t,\infty)}\big)$ and $\text{im}\big(r'|_{[t,\infty)}\big)$ lie in the same path component of $X\backslash K$.
	
	This defines an equivalence relation on the set of topologically proper continuous rays in $X$, called the \textbf{set of ends of $X$}, denoted $\ends(X)$, where the class represented by $r:\R_{\geq 0} \to X$ is called an \textbf{end of $X$} and is denoted $\text{end}(r)$.
\end{definition}

This isn't quite the definition that we want to use, however. In particular, we want bounded sets and compact sets to play the same role. We can do this if we restrict to proper metric spaces.

\begin{definition}\label{proper_space_dfn}
	A metric space $X$ is called \textbf{proper} if for every subset $B\subseteq X$ the following are equivalent.
	\begin{enumerate}
		\item $B$ is both closed and bounded.
		\item $B$ is compact.
	\end{enumerate}
\end{definition}

\begin{example}
	The proto-example of a proper metric space is $\R^n$ for any $n\in \Z_{\geq 1}$. This property is shown in the well-known Heine-Borel Theorem (see \cite{royden1988real} for instance). This then extends to any closed subspace $X \subseteq \R^n$, since a subset is closed and bounded in $X$ if and only if it is closed and bounded in $\R^n$.
	
	Another important and well-known example is locally finite geometric graphs (see Example 2.A.13 of \cite{cornulier2014metric} for instance), that is graphs equipped with the canonical path-metric. This will be important when we go on to look at locally finite geometric trees (which can be viewed as contractible locally finite geometric graphs).
\end{example}

\begin{proposition}\label{metric_proper_is_top_proper}
	Let $X$ and $Y$ be a proper metric space and $f:X \to Y$ be a continuous map, then the following are equivalent.
	\begin{enumerate}
		\item The map $f$ is topologically proper. That is, for all compact sets $K\subseteq Y$ we have that $f^{-1}(K)$ is compact in $X$.
		\item The map $f$ is metrically proper. That is, for all bounded sets $B\subseteq Y$ we have that $f^{-1}(B)$ is compact in $X$.
	\end{enumerate}
\end{proposition}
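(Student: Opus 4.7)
The plan is to prove the two directions separately, using in each case the defining feature of a proper metric space: the coincidence of compact sets with closed bounded sets. (I note that, since Definition \ref{metric_coarse_map_dfn} requires only that preimages of bounded sets be bounded, I will treat condition (2) as asserting that $f^{-1}(B)$ is bounded; the compactness conclusion would fail for open bounded $B$ even when $f$ is the identity, so I read the statement as intending bounded-ness.)

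For the forward direction $(1) \Rightarrow (2)$, let $B \subseteq Y$ be bounded. Since $Y$ is proper, the closure $\overline{B}$ is closed and bounded, hence compact. Topological properness of $f$ then gives that $f^{-1}(\overline{B})$ is compact in $X$, and in particular bounded. Since $f^{-1}(B) \subseteq f^{-1}(\overline{B})$, we conclude that $f^{-1}(B)$ is bounded. Continuity of $f$ is not actually needed here.

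For the reverse direction $(2) \Rightarrow (1)$, let $K \subseteq Y$ be compact. Compactness in a metric space always implies closed and bounded, so $K$ is in particular bounded. Metric properness then yields that $f^{-1}(K)$ is bounded in $X$. Continuity of $f$ together with closedness of $K$ (since compact subsets of metric spaces are closed) gives that $f^{-1}(K)$ is closed. Finally, since $X$ is proper, any closed and bounded subset is compact, so $f^{-1}(K)$ is compact, as required.

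The argument is largely routine; the only conceptual step is the observation that properness of the codomain lets one pass from a bounded set $B$ to its compact closure $\overline{B}$ (used in the forward direction), and that properness of the domain lets one upgrade closed and bounded to compact (used in the reverse direction). Continuity of $f$ is essential only for the reverse direction, where it ensures $f^{-1}(K)$ is closed. No real obstacle is anticipated.
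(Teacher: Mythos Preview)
Your proof is correct and follows essentially the same approach as the paper: the paper also encloses $B$ in a compact set (a closed ball $CB(b,R)$ rather than your $\overline{B}$) for the forward direction, and uses continuity plus properness of $X$ for the reverse direction exactly as you do. Your observation that condition (2) should conclude ``bounded'' rather than ``compact'' is correct and is indeed what the paper's own proof establishes.
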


\begin{proof}
	Assume $f$ is topologically proper and let $B \subseteq Y$ be bounded with diam$(B)=R$. Let $b \in B$, then $B\subseteq CB(b,R)$. Then, $CB(b,R)$ is compact in $Y$ since $Y$ is proper, and so
	\begin{align*}
		f^{-1}(B) \subseteq f^{-1}(CB(b,R)),
	\end{align*}
	is compact in $X$ by topological properness, and hence bounded.
	
	Now, assume $f$ is metrically proper and let $K \subseteq Y$ be compact. Then $K$ is closed and bounded. Then since $f$ is continuous and metrically proper, $f^{-1}(B)$ is closed and bounded in $X$, and hence compact by the properness of $X$.
\end{proof}

By Proposition \ref{metric_proper_is_top_proper} and the fact that in a proper geodesic space every compact set is bounded and every bounded set is contained in a compact set (a closed ball of greater diameter to be specific), if we restrict to the subclass of proper metric spaces the following definition for the set of ends is equivalent to the one given above.

\begin{definition}\label{ends_proper_space_dfn}
	Let $X$ be a proper space. Let $r,r':\R_{\geq 0} \to X$ be metrically proper continuous rays. We will declare them equivalent if for all bounded $B\subseteq X$ there exists some $t\geq 0$ such that $\text{im}(r|_{[t,\infty)})$ and $\text{im}(r'|_{[t,\infty)})$ lie in the same path component of $X\backslash B$.
	
	This defines an equivalence relation on the set of metrically proper continuous rays in $X$, called the \textbf{set of ends of $X$}, denoted $\ends(X)$, where the class represented by $r:\R_{\geq 0} \to X$ is called an \textbf{end of $X$} and is denoted $\text{end}(r)$.
\end{definition}

The following is a useful lemma.

\begin{lemma}[Lemma 8.28, \cite{bridson2013metric}]\label{bridsonendslemma}
	Let $X$ be a proper geodesic space, $x_0$ be some point in $X$. Then the following hold.
	\begin{enumerate}
		\item Let $r,r':\R_{\geq 0} \to X$ be proper continuous rays in $X$. Then end$(r)=$end$(r')$ if and only if for all $R>0$ there exists $T>0$ such that there exists some $k$-path path from $r(t)$ to $r'(t)$ contained in $X\backslash CB(x_0,R)$, for some $k>0$ and for all $t>T$.
		\item Each class in $\ends(X)$ may be represented by some geodesic ray.
	\end{enumerate}
\end{lemma}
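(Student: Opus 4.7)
The plan is to handle the two parts separately, with Part (2) requiring the bulk of the work. For Part (1), the $(\Rightarrow)$ direction follows almost directly from the definition of the ends relation: given $R>0$, the set $CB(x_0,R)$ is bounded, so end equivalence supplies $T>0$ with $r([T,\infty))$ and $r'([T,\infty))$ in the same path component of $X\setminus CB(x_0,R)$; for each $t>T$ the points $r(t)$ and $r'(t)$ are then joined by a path there. For the converse, given any bounded $B\subseteq X$, fix $R$ with $B\subseteq CB(x_0,R)$ (using that $X$ is geodesic, so every bounded set fits inside a closed ball about $x_0$) and apply the hypothesis to obtain transverse paths in $X\setminus CB(x_0,R)\subseteq X\setminus B$. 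By metric properness of $r$ and $r'$, choose $T$ large enough that $r([T,\infty))$ and $r'([T,\infty))$ both avoid $CB(x_0,R)$; the connected images of these tails, linked by any one transverse path, then lie in the same path component of $X\setminus B$.

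For Part (2) the plan is the standard Arzelà-Ascoli construction. Pick $t_n\to\infty$ and set $d_n = d_X(x_0, r(t_n))$; metric properness of $r$ forces $d_n\to\infty$. For each $n$, choose a geodesic $u_n\colon [0,d_n]\to X$ from $x_0$ to $r(t_n)$, which exists because $X$ is geodesic. The family $\{u_n\}$ is uniformly $1$-Lipschitz, and for each fixed $s$ the values $u_n(s)$ eventually lie in the closed ball $CB(x_0,s)$, which is compact since $X$ is proper. A diagonal extraction across the exhaustion $[0,K]$, $K\in\Z_{\geq 1}$, then yields a subsequence converging uniformly on compact subsets to a map $u\colon\R_{\geq 0}\to X$; passing to the limit in $d(u_n(s),u_n(t))=|s-t|$ shows $u$ is an isometric embedding, i.e., a geodesic ray.

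The main obstacle is verifying $\text{end}(u)=\text{end}(r)$, for which I would invoke Part (1). Fix $R>0$, pick $T>R$, and for a given $t>T$ use uniform convergence on $[0,T]$ together with $t_n\to\infty$ to select $n$ with both $d(u(T),u_n(T))<T-R$ and $t_n>t$. A path from $u(t)$ to $r(t)$ inside $X\setminus CB(x_0,R)$ is then obtained as a four-piece concatenation: follow $u$ backward from $u(t)$ to $u(T)$ (which stays outside $CB(x_0,R)$ since $u$ is parametrised by arclength from $x_0$), cross to $u_n(T)$ along a short geodesic (outside $CB(x_0,R)$ by the reverse triangle inequality and the choice of $n$), follow $u_n$ forward from $u_n(T)$ to $r(t_n)$ (outside by the same arclength reason), and finally traverse $r$ backward from $r(t_n)$ to $r(t)$ (outside $CB(x_0,R)$ provided $t$ exceeds the escape time guaranteed by metric properness of $r$). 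This yields the required connecting path for each sufficiently large $t$, and Part (1) then delivers $\text{end}(u)=\text{end}(r)$.
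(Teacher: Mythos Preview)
The paper does not actually supply a proof of this lemma: it is simply quoted as Lemma~8.28 of \cite{bridson2013metric} and used as a black box thereafter. So there is no ``paper's own proof'' to compare against; your argument is essentially a reconstruction of the standard Bridson--Haefliger proof, and the Arzel\`a--Ascoli construction in Part~(2) together with the four-piece concatenation is exactly the approach taken there.

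That said, there is one point in Part~(1) you have glossed over. The statement is phrased in terms of $k$-paths, which in this paper (see the proof of Theorem~\ref{ends_functor}) means a finite sequence $x_0,x_1,\dots,x_n$ with $d_X(x_i,x_{i+1})<k$, not a continuous path. In the $(\Rightarrow)$ direction this is harmless: a continuous path in $X\setminus CB(x_0,R)$ can be discretised to a $k$-path for any $k>0$ by uniform continuity. But in the $(\Leftarrow)$ direction you write ``linked by any one transverse path'' as though the hypothesis already supplies a continuous path. It does not: you must connect consecutive points of the $k$-path by geodesic segments (using that $X$ is geodesic), and these segments can dip up to distance $k$ closer to $x_0$. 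So given a bounded $B\subseteq CB(x_0,R_0)$, you should invoke the hypothesis with some $R>R_0$, obtain the associated $k$, and if $R-k\le R_0$ iterate with a larger $R$; alternatively, note that a single application with $R=R_0$ already suffices once you observe that the resulting continuous path lies in $X\setminus CB(x_0,R_0-k)$ and simply increase $T$ so that the tails of $r,r'$ also avoid $CB(x_0,R_0)$---the path-component condition only requires the tails to lie in the same component, and the linking path may be taken at a single large parameter value where a fresh application of the hypothesis with radius $R_0+k$ works. Either way, the fix is routine, but the distinction between $k$-paths and continuous paths should be made explicit.

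A second, smaller point: in step~2 of your concatenation the estimate $d(x_0,y)\ge T-(T-R)=R$ only gives $\ge R$, so the geodesic from $u(T)$ to $u_n(T)$ might graze $CB(x_0,R)$. Choosing $T>R$ strictly and $n$ with $d(u(T),u_n(T))<T-R$ strictly (as you wrote) actually gives $d(x_0,y)>R$ for interior points, but you should also ensure the endpoints $u(T),u_n(T)$ themselves are strictly outside, which they are since $d(x_0,u(T))=T>R$; so this is fine on closer inspection, but worth stating cleanly.
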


The ideas below are essentially an adaptation of Proposition 8.29 of \cite{bridson2013metric}. Here, we make the effort to use categorical language. This will make these ideas easier to talk about in an upcoming section where we wish consider the set of ends in the context of coarse homotopy on the categorical level.

\begin{definition}\label{continuousray}
	Let $X$ be a geodesic space and $r: \R_{\geq 0} \to X$ be any map. Then we define the continuous map $r^*$ as follows
	\begin{align*}
		r^*(h) \coloneqq u_{r(\lfloor h\rfloor),r(\lceil h \rceil)}\left(a_{r(\lfloor h\rfloor),r(\lceil h \rceil)}(h-\lfloor h \rfloor)\right)
	\end{align*}
	where $u_{r(\lfloor h\rfloor),r(\lceil h \rceil)} : \left[0,a_{r(\lfloor h\rfloor)}\right] \to X$ is a choice of geodesic segment from $r(\lfloor h \rfloor)$ to $r(\lceil h \rceil)$. In particular, $r^*|_{\Z_{\geq 0}}=r|_{\Z_{\geq 0}}$.
\end{definition}

Obviously, the above definition is not a well-defined one in general (specifically if $X$ is not uniquely geodesic), however it is defined as well as we need in the context of ends by the following proposition.

\begin{proposition}
	Let $X$ be a proper geodesic space and $r: \R_{\geq 0} \to X$ be a proper continuous ray. Then $r^*$ is proper and end$(r)=$end$(r^*)$ where any choice of geodesic segments are used to define $r^*$.
\end{proposition}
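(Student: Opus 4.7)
The plan is to verify continuity of $r^*$, then properness, then the ends identification—leveraging that $r^*$ matches $r$ on every integer and interpolates via geodesics in between. Continuity of $r^*$ is immediate from the definition: on each $[n, n+1]$, $r^*$ is a reparametrization of the geodesic segment $u_{r(n), r(n+1)}$, and at an integer $h = n$ both one-sided limits equal $r(n)$, so $r^*$ is continuous globally.

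For properness, I would show that $r^*$ stays close to $r$ in a controlled way: for $h \in [n, n+1]$, since $r^*(h)$ lies on a geodesic from $r(n)$ to $r(n+1)$,
$$d_X(r^*(h), r(h)) \leq d_X(r^*(h), r(n)) + d_X(r(n), r(h)) \leq 2\, \text{diam}(r([n, n+1])).$$
Provided $\text{diam}(r([n, n+1]))$ is bounded uniformly in $n$ (as holds, e.g., when $r$ is Lipschitz), properness of $r$ transfers to $r^*$: for any bounded $B \subseteq X$, the set $(r^*)^{-1}(B)$ is contained in $r^{-1}(B^+)$ for a suitable enlargement $B^+$, which is bounded by properness of $r$.

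For $\text{end}(r) = \text{end}(r^*)$, I would invoke Lemma \ref{bridsonendslemma}. Fix a basepoint $x_0$; given $R > 0$, by properness choose $T$ such that for $n > T$ both $r([n, n+1])$ and the geodesic from $r(n)$ to $r(n+1)$ lie outside $CB(x_0, R)$. For $t \in [n, n+1]$ with $n > T$, the concatenation of $r|_{[n, t]}$ traversed backward with $r^*|_{[n, t]}$ is a continuous path from $r(t)$ to $r^*(t)$ of length at most $2\,\text{diam}(r([n, n+1]))$ and contained in $X \setminus CB(x_0, R)$. This verifies the criterion of Lemma \ref{bridsonendslemma}, so $\text{end}(r) = \text{end}(r^*)$.

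The main obstacle is the uniform control on $\text{diam}(r([n, n+1]))$: a generic continuous proper ray need not satisfy such a bound (one can construct $r$ in $\R^2$ with $r$ proper but the straight-line geodesics of $r^*$ swinging back through the basepoint). The argument thus seems to require either additional regularity—such as the Lipschitz or asymptotically Lipschitz rays appearing elsewhere in the paper—or a more delicate argument showing that only finitely many intermediate geodesics can re-enter any fixed bounded set.
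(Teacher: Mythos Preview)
Your instinct at the end is correct, and it is not merely an obstacle to \emph{your} argument: it is a counterexample to the proposition as literally stated. Take $X=\R^2$ and define $r$ on integers by $r(2k)=(k,0)$ and $r(2k+1)=(-k,0)$; between consecutive integers let $r$ trace arcs of the circle of radius $k$ (upper semicircle on $[2k,2k+1]$, lower semicircle followed by a short segment on $[2k+1,2k+2]$). Then $|r(t)|\geq k$ for $t\geq 2k$, so $r$ is continuous and metrically proper. But the Euclidean geodesic from $(k,0)$ to $(-k,0)$ passes through the origin, so $r^*(2k+\tfrac12)=(0,0)$ for every $k$ and $r^*$ is not proper. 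Thus no ``more delicate argument'' will rescue the statement in full generality; the paper's one-word proof (``Straightforward'') is simply too quick here.

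What saves the paper is that it never actually needs the proposition at this level of generality. Every time the $(-)^*$ construction is invoked---in the definition of $f_{\mathcal{E}}$ in Theorem~\ref{ends_functor} and in Lemma~\ref{every_crs_ray_close_to_plip_ray}---the input ray is the composite of a coarse map with a geodesic or coarse ray, hence is itself coarse, hence asymptotically Lipschitz by Proposition~\ref{aL_implies_crd}. Under that hypothesis $\mathrm{diam}\,r([n,n+1])$ is uniformly bounded (by $A+B$), and your argument for properness and for $\mathrm{end}(r)=\mathrm{end}(r^*)$ goes through exactly as you wrote it. So your proof is correct for the statement the paper actually uses; the flaw lies in the proposition's hypotheses, not in your reasoning.
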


\begin{proof}
	Straightforward.
\end{proof}

Let $\textbf{PGeo}^{\text{Crs}}$ be the category of proper geodesic metric spaces and a coarse maps.

\begin{theorem}\label{ends_functor}
	The following data defines a functor $\ends(-):\textbf{PGeo}^{\text{Crs}} \to \textbf{Set}$.
	\begin{itemize}
		\item $\ends(-): X \mapsto \ends(X)$ for each $X\in \textbf{PGeo}^{\text{Crs}}$, and
		\item $\ends(-) : (f:X \to Y) \mapsto (f_{\mathcal{E}}: \ends(X) \to \ends(Y))$ where
		\begin{equation*}
			f_{\mathcal{E}}: \text{end}(r) \mapsto \text{end}((f\circ r)^*),
		\end{equation*}
		for any morphism $f:X \to Y$.
	\end{itemize}
	Further, $\ends(-)$ sends close maps to equal maps.
\end{theorem}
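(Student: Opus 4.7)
The plan is to verify the theorem's claims in sequence: well-definedness on objects (trivial), well-definedness on morphisms, functoriality, and the close-maps statement. The main work is well-definedness on morphisms, which I would split into three parts: (a) for any coarse $f:X \to Y$ and any proper continuous ray $r$ in $X$, $(f\circ r)^*$ is a proper continuous ray in $Y$; (b) its end is independent of the choice of geodesic segments used in Definition \ref{continuousray}; and (c) its end depends only on $\text{end}(r)$.

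For (a), the difficulty is that for a general proper continuous ray $r$ there is no a priori bound on $d_X(r(n), r(n+1))$, and hence no control on the lengths of the geodesic interpolations used to build $(f\circ r)^*$. I would therefore reduce to a geodesic representative $r_g$ of $\text{end}(r)$ via Lemma \ref{bridsonendslemma}(2). Then $d_X(r_g(n), r_g(n+1)) = 1$, and controlledness of $f$ gives $d_Y(f(r_g(n)), f(r_g(n+1)))\leq S(1)$, so each geodesic piece of $(f\circ r_g)^*$ on $[n,n+1]$ lies in $CB(f(r_g(n)), S(1))$. Given a bounded $B\subseteq Y$, if $(f\circ r_g)^*(h)\in B$ for some $h\in [n,n+1]$ then $f(r_g(n))$ lies in the $S(1)$-neighbourhood of $B$, so $r_g(n)$ lies in the $f$-preimage of this neighbourhood, which is bounded by metric properness of $f$; properness of $r_g$ then forces only finitely many such $n$. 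Continuity is immediate from the piecewise-geodesic construction.

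Parts (b), (c), and the close-maps claim all reduce to the following unified observation: any two proper continuous rays $\rho,\rho':\R_{\geq 0}\to Y$ obtained by geodesic interpolation of integer-indexed sequences that agree, or differ only by a uniform constant, represent the same end. This I would verify via Lemma \ref{bridsonendslemma}(1) by joining $\rho(t)$ to $\rho'(t)$ with a uniformly-length-bounded geodesic, which stays outside any chosen bounded $B$ once its endpoints are sufficiently far out (which eventually happens by properness). For (c) specifically, assuming $\text{end}(r)=\text{end}(r')$, given a bounded $B\subseteq Y$ I would apply Lemma \ref{bridsonendslemma}(1) in $X$ with the bounded set $f^{-1}$ of a suitable enlargement of $B$ to produce a $k$-path from $r_g(t)$ to $r_g'(t)$ avoiding it; controlledness of $f$ pushes this to an $S(k)$-path in $Y$ avoiding $B$, and adjoining the short geodesic tails back to $(f\circ r)^*(t)$ and $(f\circ r')^*(t)$ (kept outside $B$ by the same properness argument as in (a)) completes the required $k'$-path.

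Functoriality is then brief: identity follows from the unlabelled proposition immediately after Definition \ref{continuousray}, which gives $\text{end}(r^*)=\text{end}(r)$; for composition, $g\circ (f\circ r)^*$ and $g\circ f\circ r$ agree at every nonnegative integer, so their $*$-constructions differ only in the choices of geodesics in the target space between the same integer-indexed values, and (b) then gives equal ends. The principal obstacle throughout is the potential discontinuity of coarse maps: without the $1$-Lipschitz reduction afforded by Lemma \ref{bridsonendslemma}(2), neither the properness argument in (a) nor the short-tail device in (c) goes through, so this reduction is the essential technical ingredient.
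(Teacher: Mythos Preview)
Your proposal is correct and follows essentially the same route as the paper: reduce to geodesic representatives via Lemma~\ref{bridsonendslemma}(2), use controlledness of $f$ to push $k$-paths forward for well-definedness, observe that the two relevant rays agree at integers for functoriality, and handle close maps by the same short-path device. The paper organises these steps slightly differently (it does not separate your (a) and (b) explicitly, relying instead on the unproved proposition after Definition~\ref{continuousray}), but the substance is the same; your packaging of (b), the close-maps case, and the ``agree at integers'' step into one unified observation is if anything cleaner than the paper's treatment.
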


\begin{proof}
	To see $f_{\mathcal{E}}$ is well-defined, by Lemma \ref{bridsonendslemma} it suffices to show that for any pair of $x_0$-rooted geodesic rays $r,r'$ in $X$, for some $x_0$, such that $\text{end}(r)=\text{end}(r')$ we have 
	\begin{align*}
		f_{\mathcal{E}}(\text{end}(r)) = \text{end}((f\circ r)^*) = \text{end}((f\circ r')^*) = f_{\mathcal{E}}(\text{end}(r')).
	\end{align*}
	Let $B$ be bounded in $Y$ and consider $f^{-1}(B)$, which is bounded in $X$ since $f$ is proper. Then, there exists some $R>0$ such that $f^{-1}(B)\subseteq OB(x_0,R)$. Then, by Lemma \ref{bridsonendslemma}, there exists some $k>0$ and $T>0$ such that there is a $k$-path, say
	\begin{equation*}
		r(t) = x_{0,t},x_{1,t},...,x_{n_t,t}=r'(t)
	\end{equation*}
	from $r(t)$ to $r'(t)$ for all $t>T$. Since $f$ is controlled, for all $x,x'\in X$, there exists $S(k)>0$ and $S(1)>0$ such that if $d_X(x,x')\leq k$ then $d_Y(f(x),f(x'))<S(k)$, and if $d_X(x,x')\leq 1$ then $d_Y(f(x),f(x'))<S(1)$. Let $k'\coloneq \text{max}(S(k),S(1))$.
	
	Then, there exists some $R'$ with $f[OB(x_0,R)]\subseteq OB(f(x_0),R')$. Further, since $r$ an $r'$ are proper, there exists some $T'>0$ with
	\begin{equation*}
		\text{im}\big(f\circ r|_{[T',\infty)}\big)\cap OB(f(x_0),R') = \emptyset,
	\end{equation*}
	and
	\begin{equation*}
		\text{im}\big(f\circ r'|_{[T',\infty)}\big)\cap OB(f(x_0),R') = \emptyset.
	\end{equation*}
	Then, for all $t>\text{max}(T,T')$, there is a $k'$-path from $f\circ r(t)$ to $f\circ r'(t)$, given by
	\begin{equation*}
		f\circ r(t), f\circ r(\ceil{t}) = f\big(x_{0,\ceil{t}}\big),f\big(x_{1,\ceil{t}}\big),...,f\big(x_{n_{\ceil{t}},\ceil{t}}\big)=f\circ r'(\ceil{t}), f\circ r'(t),
	\end{equation*}
	and $f_{\mathcal{E}}$ is well-defined.
	
	To see functionality, let $f:X \to Y$ and $g:Y \to Z$ be morphisms in $\textbf{PGeo}^{\text{Crs}}$ and let $\text{end}(r)\in \ends(X)$. Then we claim 
	\begin{equation*}
		\text{end}\left(\left((g\circ f)\circ r \right)^*\right) = \text{end}\left(\left(g\circ (f\circ r)^* \right)^*\right).
	\end{equation*}
	To see this, let $B$ be bounded in $Y$. Then since both $\left((g\circ f)\circ r \right)^*$ and $\left(g\circ (f\circ r)^* \right)^*$ are proper, there exists $R>0$ such that
	\begin{equation*}
		\text{im}\left(\left((g\circ f)\circ r \right)^*|_{[R,\infty)}\right) \cap B = \emptyset,
	\end{equation*}
	and
	\begin{equation*}
		\text{im}\left(\left(g\circ (f\circ r)^* \right)^*|_{[R,\infty)}\right) \cap B = \emptyset.
	\end{equation*}
	Then just pick $\ceil{R}$, then
	\begin{equation*}
		\left((g\circ f)\circ r \right)^*(\ceil{R}) = \left(g\circ (f\circ r)^* \right)^*(\ceil{R})
	\end{equation*}
	and $\ends(-)$ is a functor.
	
	Finally, suppose we have another map $f':X \to Y$ such that $f$ is close $f'$. Again, by Lemma \ref{bridsonendslemma} is suffices to show that for all $x_0$-rooted geodesic rays $r$ in $X$ we have $f_{\mathcal{E}}(r)=f'_{\mathcal{E}}(r)$. There exists some $C\geq 0$ such that $d_Y(f(x),f'(x))\leq C$. Similar to above, since $f'$ is controlled, for all $x,x'\in X$, if $d_X(x,x')<1$ then $d_Y(f'(x),f'(x'))<S'(1)$. Then let $k''\coloneq \text{max}(S(1),S'(1),C)$, for $S(1)$ above, and let $R>0$.
	
	Then, since $f$ and $f'$ are proper, there exists some $T''>0$ such that
	\begin{equation*}
		\text{im}\big(f\circ r|_{[T'',\infty)}\big)\cap OB(f(0),R') = \emptyset,
	\end{equation*}
	and
	\begin{equation*}
		\text{im}\big(f\circ r|_{[T'',\infty)}\big)\cap OB(f(0),R') = \emptyset.
	\end{equation*}
	Where we make the arbitrary choice $f(0)$ for the centre of the ball. Then, for all $t>T''$ there is a $k''$-path from $f\circ r(t)$ to $f'\circ r(t)$, given by
	\begin{equation*}
		f\circ r(t),f\circ r(\ceil{t}),f'\circ r(\ceil{t}), f'\circ r(t),
	\end{equation*}
	and we are done.
\end{proof}

\begin{corollary}\label{ends_quasi_inv}
	The functor $\ends(-):\textbf{PGeo}^{\text{Crs}} \to \textbf{Set}$ maps coarse equivalences (and hence quasi-isometries) to bijections.
\end{corollary}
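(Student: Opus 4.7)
The plan is to exploit the standard categorical trick: once we know a functor sends a notion of ``equivalence by close maps'' to actual equality, then any invertible-up-to-closeness morphism goes to an honest isomorphism. Since the previous theorem already establishes both functoriality of $\ends(-)$ on $\textbf{PGeo}^{\text{Crs}}$ and the fact that close maps are sent to equal functions, the work has essentially been done; this corollary is a formal consequence.

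First I would unfold the definition of coarse equivalence: given a coarse equivalence $f: X \to Y$ in $\textbf{PGeo}^{\text{Crs}}$, there exists a coarse map $g: Y \to X$ with $gf$ close to $1_X$ and $fg$ close to $1_Y$. Both $g$, $gf$ and $fg$ are morphisms in $\textbf{PGeo}^{\text{Crs}}$ (recalling from Remark \ref{crdeqiscrseq} that coarse equivalences are automatically proper, and noting that composition and closeness are preserved inside the category). Then I would apply the functor $\ends(-)$ from Theorem \ref{ends_functor}, obtaining
\begin{equation*}
    g_{\mathcal{E}} \circ f_{\mathcal{E}} = (gf)_{\mathcal{E}} = (1_X)_{\mathcal{E}} = 1_{\ends(X)},
\end{equation*}
and symmetrically $f_{\mathcal{E}} \circ g_{\mathcal{E}} = 1_{\ends(Y)}$, where the middle equality in each line is exactly the ``close maps go to equal maps'' clause of Theorem \ref{ends_functor}, and the outer equalities use functoriality. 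This presents $f_{\mathcal{E}}$ as a bijection with two-sided inverse $g_{\mathcal{E}}$.

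For the parenthetical claim about quasi-isometries, I would simply invoke Corollary \ref{qiiffcrseq}: since $X$ and $Y$ are geodesic, hence quasi-geodesic, any quasi-isometry $f: X \to Y$ is in particular a coarse equivalence, so the result already proved applies. There is no real obstacle here; the only thing to double-check is that $g$ genuinely lives in $\textbf{PGeo}^{\text{Crs}}$ (i.e., is coarse, not merely controlled), which is guaranteed by Remark \ref{crdeqiscrseq}, so the compositions $gf$ and $fg$ are legitimately morphisms to which the functor applies.
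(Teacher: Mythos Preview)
Your proof is correct and takes essentially the same approach as the paper, which simply says the result follows from Theorem \ref{ends_functor}. You have spelled out the standard categorical argument in more detail than the paper does, but the underlying idea is identical.
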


\begin{proof}
	Follows from Theorem \ref{ends_functor}.
\end{proof}

The following is also an important result in the field.

\begin{theorem}[Theorem 8.32, \cite{bridson2013metric}]\label{ends_of_groups}
	Let $\Gamma$ be a finitely-generated group. Then
	\begin{enumerate}
		\item $|\ends(\Gamma)|\in\{0,1,2\}$ or is uncountably infinite.,
		\item $|\ends(\Gamma)| = 0$ if and only if $\Gamma$ is a finite group,
		\item $|\ends(\Gamma)| = 2$ if and only if $\Gamma$ is quasi-isometric to $\Z$, and
		\item $|\ends(\Gamma)|$ is uncountably infinite if and only if $\Gamma$ can be expressed as an amalgamated free product $A*_cB$ or HNN extension $A*_C$ for finite $C$, $|A\backslash C| \geq 3$ and $|B \backslash C|\geq 2$.
	\end{enumerate}
\end{theorem}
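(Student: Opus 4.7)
The plan is to reduce everything to statements about the Cayley graph. For any finite generating set $S$, the chain of coarse equivalences $\text{Cay}(\Gamma,S) \overset{\text{Crs}}{\simeq} |\Gamma|_S \overset{\text{Crs}}{\simeq} |\Gamma|_T \overset{\text{Crs}}{\simeq} \text{Cay}(\Gamma,T)$ from the earlier example, combined with Corollary \ref{ends_quasi_inv}, tells us that $|\ends(\text{Cay}(\Gamma,S))|$ is independent of $S$, so we may unambiguously write $\ends(\Gamma)$ and work with any chosen Cayley graph, which is a proper geodesic space (being a locally finite geometric graph). All four statements are then statements about this space.

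Part (2) and the reverse direction of (3) I would handle directly. If $\Gamma$ is finite, $\text{Cay}(\Gamma,S)$ is compact and admits no metrically proper rays, so $|\ends(\Gamma)|=0$; conversely, if $\Gamma$ is infinite then local finiteness plus a König's lemma argument produces an isometrically embedded ray, giving at least one end. The reverse of (3) is immediate: if $\Gamma$ is quasi-isometric to $\Z$ then Corollary \ref{ends_quasi_inv} forces $|\ends(\Gamma)|=|\ends(\Z)|=2$.

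The forward direction of (3) is Hopf's theorem. I would use the left-multiplication action of $\Gamma$ on $\ends(\Gamma)$; when there are exactly two ends, this action has a subgroup of index at most $2$ fixing both ends, and using Lemma \ref{bridsonendslemma} one produces a bi-infinite geodesic line joining them. A counting/orbit argument on translates of this line then shows $\Gamma$ is virtually $\Z$, hence quasi-isometric to $\Z$. For the ``gap'' statement in part (1), the key observation is that $\ends(\Gamma)$ is compact and totally disconnected and that $\Gamma$ acts on it by homeomorphisms; if there are at least three ends, one shows the action is sufficiently rich (some orbit accumulates on itself) that the space must be a Cantor set, hence uncountable.

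The main obstacle is part (4), Stallings' theorem on ends of groups, which is genuinely deep and not provable by the elementary methods developed so far in the paper. The modern route I would outline, rather than carry out in detail, is: extract from a multi-ended Cayley graph an ``almost invariant'' subset (equivalently a codimension-one subgroup, or a Dunwoody track), use Sageev's construction to obtain an essential action of $\Gamma$ on a CAT(0) cube complex, and then read off the splitting via Bass--Serre theory applied to a $\Gamma$-tree extracted from this data. Since reproducing this apparatus would be a large detour from the paper's coarse-homotopy focus, my proposal is to sketch only parts (1)--(3) as above and cite \cite{stallings1968torsion} together with \cite{bridson2013metric} for part (4).
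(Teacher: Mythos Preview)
The paper does not prove this theorem at all: it states it as Theorem~8.32 of \cite{bridson2013metric} and immediately remarks that parts (1)--(3) are due to Hopf \cite{hopf1943enden} and part (4) to Stallings \cite{stallings1968torsion}, with no further argument. So there is no ``paper's own proof'' to compare against; the theorem is quoted purely as background and later used only to deduce a corollary about $|\pi_0^{\text{Crs}}(\Gamma)|$.

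Your proposal therefore does strictly more than the paper: you sketch direct arguments for (2) and the easy direction of (3), outline the Hopf argument for the hard direction of (3) and the Cantor-set argument for (1), and then defer (4) to Stallings. That is a reasonable and essentially correct outline, and your decision to cite out for (4) matches the paper's stance. If anything, for the purposes of this paper your sketch is over-engineered: since the result is never proved here, the expected ``proof'' is simply the citation to \cite{bridson2013metric}, \cite{hopf1943enden}, and \cite{stallings1968torsion}.
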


The authors of \cite{bridson2013metric} point out $(1)$-$(3)$ above are due to Hopf \cite{hopf1943enden} and $(4)$ is due to Stallings \cite{stallings1968torsion}. Definitions of an amalgamated free product of HNN extension are given in section $(III.\Gamma.6)$ of \cite{bridson2013metric}. 

%%%%%%%%%%%%%%%%%%%%%%%%%%%%%%%%%%%%%%%%%%%%%%%%%%%%%%%%%%%%%%%%%%%%%%%%%%%%%%%%%%%%%%%%%%%%%%%%%%%%%%%%%%%%%%%%%%%%%%%%%%%%%%%%%%%%%%%%%%%%%%%%%%%%%%%%%%%%%%%%%%%%%%%%%%%%%%%%%%%%%%%%%%%%%%%%%%%%%%%%%%%%%%%%%%%%%%%%%%%%%%%%%%%%%%%%%%%%%%%%%%%%%%%%%%%%%%%%%%%%%%%%%%%%%%%%%%%%%%%%%%%%%%%%%%%

\section{Abstract Coarse Geometry}

Much like how one may abstractify the notion of open sets as a topology, we can do a similar thing with large-scale structure. This provides an axiomatic language allowing for easier defining of concepts such as a large-scale notion of products. 

The following definition is now a well-known one. In this form it is adapted from the one given in \cite{mitchener2020coarse}.

\begin{definition}
	Consider a set $X$. A \textbf{coarse structure} on X, $\C_X$, is a collection of subsets of the cartesian product $X \times X$, called \textbf{controlled sets}, such that
	\begin{enumerate}
		\item the diagonal of $X$, $\Delta_X\coloneq \{(x,x)\ |\ x \in X\} \subseteq X \times X$, is a controlled set,
		\item for any controlled set $U\in\C_X$ and $V\subseteq U$ we have that $V$ is controlled,
		\item for any finite collection $(U_i)_{i \in \mathcal{I}}$ such that each $U_i$ is a controlled set, we have that the finite union $\bigcup_{i \in \mathcal{I}} U_i$ is also a controlled set,
		\item for any controlled set $U \in \C_X$, its \textbf{inverse} defined by
		\begin{align*}
			U^{-1} := \{(x,x')\in X\times X\ |\ (x',x) \in U\}
		\end{align*}
		is an controlled set, and
		\item for any two controlled sets $U,V \in \C_X$, their \textbf{composite}, defined as
		\begin{align*}
			U \circ V := \{(x,x') \in X \times X\ |\ \exists\ x'' \in X : (x,x'') \in U, (x'',x') \in V\}
		\end{align*}
		is an controlled set.
	\end{enumerate}
	A \textbf{coarse space} $(X,\C_X)$ is a set $X$ equipped with a coarse structure $\C_X$.
\end{definition}

We will often denote a coarse space $(X,\C_X)$ just by its set $X$ when the context is clear. If $U \in \C_X$ and $U=U^{-1}$ then $U$ is said to be a \textbf{symmetric controlled set}. Every controlled set $U \in \C_X$ is the subset of a symmetric controlled set, given by $U \cup U^{-1}$, which we denote as $\text{sym}(U)$ and call the \textbf{symmetrization of $U$}.

\begin{examples}\label{coarse_space_basic_examples}
	\begin{enumerate}
		\item The immediate, and certainly most important, example of a coarse structure is that generated by a metric. Let $(X,d)$ be a metric space. Then we can define the \textbf{metric coarse structure} $\C_X^d$ on $X$ where a subset $U \subseteq X \times X$ is controlled if $\text{sup}\{d(x,x')\ |\ (x,x')\in E\ \}< \infty$. That is if $U$ is a subset of the following set
		\begin{equation*}
			U^R_{X} \coloneq \{(x,x') \in X\times X\ |\ d_X(x,x')<R \},
		\end{equation*}
		for some $R>0$.
		\item Any set $X$ may be equipped with the \textbf{minimum coarse structure}, $\C_X^{\text{min}}$, which is defined to be the set of all finite unions of sets of the form $\{(x,x)\}$ for some $x\in X$, and the \textbf{maximum coarse structure}, $\C_X^{\text{max}}$, which is defined to be the set of all subsets of $X\times X$. We may denote any set $X$ equipped with the minimum or maximum coarse structure by $X^{\text{min}}$ or $X^{\text{max}}$ respectively.
	\end{enumerate}
\end{examples}

\begin{definition}
	Let $(X,\C_X)$ be a coarse space. A \textbf{bounded set} in $X$ is a subset $B\subseteq X$ such that $B\times B \in \C_X$. We will borrow some notation from \cite{bunke2020homotopy} and denote the collection of bounded sets in $X$ by $\B_X$. 	We call a coarse space $X$ \textbf{bounded} if and only if $X\times X \in \C_X$ (i.e., $X\in \B_X$).	
\end{definition}

\begin{example}
	A bounded set in a metric coarse structure is precisely a bounded set with respect to the metric. Further, a metric space $X$ itself is bounded if and only if $X\times X$ is controlled with respect to the metric coarse structure.
\end{example}

\begin{definition}
	Let $(X,\C_X),(Y,\C_Y)$ be coarse spaces and $f:X \to Y$ be a function. Then $f$ is
	\begin{enumerate}
		\item \textbf{controlled} if $(f\times f)(E) \in \C_Y$ for all $E \in \C_X$,
		\item \textbf{proper} if $f^{-1}(B)$ is bounded in $X$ for all $B$ bounded in $Y$, and
		\item \textbf{coarse} if both controlled and proper.
	\end{enumerate}
\end{definition}

A category we will often be interested in is made up of the class of coarse spaces and coarse maps which we will denote \textbf{Crs}.

\begin{example}\label{metric_crs_=_crs_crs}
	Let $X$ and $Y$ be metric spaces. Then a map $f:X \to Y$ is controlled (resp. proper, coarse) with respect to their respective metric coarse structures if and only if $f$ is controlled (resp. proper, coarse) with respect to Definition \ref{metric_coarse_map_dfn}.
\end{example}

We will slightly abuse terminology when referring to the metric coarse structure and often treat a metric space $X$ as both a coarse space and a metric space interchangeably. Further, we will view any category of metric spaces and coarse maps to be a subcategory of \textbf{Crs}.

\begin{definition}
	Let $(X,\C_X)$ be a coarse space and $Y\subseteq X$. Then we can define the \textbf{coarse subspace structure} $\C_Y^X$ on $Y$ to be the collection of subsets $E\subseteq Y\times Y$ where $E\in \C_Y^X$ if and only if $E \in \C_X$.
\end{definition}

\begin{definition}
	Let $(X,\C_X),(Y,\C_Y)$ be coarse spaces. Then we define the \textbf{product coarse structure} $\C_{X\times Y}$ on the set $X\times Y$ to be the collection of subsets $E\subseteq (X\times Y)\times (X\times Y)$ where $E \in \C_{X\times Y}$ if and only if $\pi_{X\times X}(E) \in \C_X$ and $\pi_{Y\times Y}(E) \in \C_Y$, where
	\begin{equation*}
		\pi_{X\times X}: (X\times Y) \times (X \times Y) \to X\times X
	\end{equation*}
	and so
	\begin{equation*}
		\pi_{Y\times Y}: (X\times Y) \times (X \times Y) \to Y\times Y
	\end{equation*}
	are the canonical projections.
\end{definition}

\begin{remark}
	It should be noted that the above definition does not give a product in the categorical sense in \textbf{Crs}. This is because the projections are not in general coarse maps (they are controlled but not proper). We do, however, have the following useful maps.
\end{remark}

\begin{proposition}\label{crs_maps_for_products}
	Let $X$ and $Y$ be coarse spaces. Fix some $y \in Y$, then the inclusion $X \hookrightarrow X\times Y$ with $x\mapsto (x,y)$ is coarse. Further, for any other pair of coarse spaces $X'$ and $Y'$ and coarse maps $f:X \to X'$ and $g:Y \to Y'$, the canonical map $f\times g : X\times Y \to X' \times Y'$ is coarse. 
\end{proposition}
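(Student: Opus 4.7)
The plan is to verify both claims directly from the definitions of the product coarse structure, the inclusion, and the notions of controlled and proper maps, with the essential observation that subsets of bounded sets are bounded (immediate from axioms $(1)$ and $(2)$ of a coarse structure).

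For the inclusion $i_y:X \to X\times Y$, I would first check that it is controlled. Given $E\in \C_X$, the image $(i_y\times i_y)(E)$ consists of pairs $((x,y),(x',y))$ with $(x,x')\in E$, so $\pi_{X\times X}((i_y\times i_y)(E))=E\in \C_X$ and $\pi_{Y\times Y}((i_y\times i_y)(E))\subseteq \{(y,y)\}\subseteq \Delta_Y\in \C_Y$. By axiom $(2)$, the latter lies in $\C_Y$, so $(i_y\times i_y)(E)\in \C_{X\times Y}$. For properness, let $B\subseteq X\times Y$ be bounded, so that $B\times B\in \C_{X\times Y}$. Applying $\pi_{X\times X}$ gives $\pi_X(B)\times \pi_X(B)\in \C_X$, so $\pi_X(B)$ is bounded in $X$, and since $i_y^{-1}(B)\subseteq \pi_X(B)$ it too is bounded.

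For the map $f\times g:X\times Y\to X'\times Y'$ I would use the identity
\begin{equation*}
\pi_{X'\times X'}\bigl((f\times g\times f\times g)(E)\bigr) = (f\times f)\bigl(\pi_{X\times X}(E)\bigr),
\end{equation*}
with the analogous identity on the $Y'$ side. Given $E\in \C_{X\times Y}$, the projections $\pi_{X\times X}(E)$ and $\pi_{Y\times Y}(E)$ lie in $\C_X$ and $\C_Y$ respectively, and since $f,g$ are controlled their images under $f\times f$ and $g\times g$ land in $\C_{X'}$ and $\C_{Y'}$, giving controlledness. For properness, let $B\subseteq X'\times Y'$ be bounded. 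The same projection argument as in the previous paragraph shows $\pi_{X'}(B)$ is bounded in $X'$ and $\pi_{Y'}(B)$ is bounded in $Y'$, and properness of $f$ and $g$ then gives that $f^{-1}(\pi_{X'}(B))$ and $g^{-1}(\pi_{Y'}(B))$ are bounded. Since $(f\times g)^{-1}(B)\subseteq f^{-1}(\pi_{X'}(B))\times g^{-1}(\pi_{Y'}(B))$, and a product of bounded sets is easily seen to be bounded in the product coarse structure (its self-product projects to products of bounded sets in each factor), the preimage is bounded as required.

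The only step that requires any thought is recognizing that the product coarse structure interacts well with projections in both directions (images for controlledness, preimages for properness), and isolating the auxiliary fact that subsets of bounded sets are bounded and that products of bounded sets are bounded in $\C_{X\times Y}$. Neither of these is really an obstacle, so I expect the proof to be essentially bookkeeping once these points are made explicit.
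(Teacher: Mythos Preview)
Your proposal is correct and is precisely the kind of routine verification the paper has in mind: the paper's own proof consists of the single word ``Straightforward,'' and your argument spells out exactly the direct check from the definitions of the product coarse structure that this word is gesturing at. There is nothing to compare; you have simply supplied the details the author omitted.
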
	

\begin{proof}
	Straightforward.
\end{proof}

An isomorphism in the category \textbf{Crs} is precisely a bijection for which the maps in both directions are coarse. We will term these \textbf{coarse isomorphisms}, and denote them with $\cong$. 

While coarse isomorphisms are useful for some technicalities, they are not in general the equivalence most useful for the study of large-scale structure. The following definition introduces a weaker but more geometrically interesting form of equivalence, which are the focal point for much of the literature, and align better with the equivalences we have seen thus far.

\begin{definition}\label{closedfn}
	Let $(X,\C_X),(Y,\C_Y)$ be coarse spaces and $f,g:X \to Y$ be coarse maps. Then we say $f$ and $g$ are \textbf{close} if $(f\times g)(\Delta_X) \in \C_Y$.
	
	Further, we say $f$ is a \textbf{coarse equivalence} if there exists a coarse map $f':Y \to X$, termed \textbf{the coarse inverse of $f$}, such that $g\circ f$ is close to $ 1_{X}$ and $f\circ g$ is close to $1_Y$.
\end{definition}

\begin{example}\label{abstract_crs_equiv_is_metric_crs_equiv}
	A pair of coarse maps in the metric sense (Definition \ref{close_metric}) are close if and only if they are close with respect to the metric coarse structures (Definition \ref{closedfn}). Further, any coarse map between metric spaces is a coarse equivalence in the metric sense (Definition \ref{metric_crs_equiv_dfn}) if and only if it is a coarse equivalence with respect to the metric coarse structures (Definition \ref{closedfn}).
\end{example}

The following is useful for the explicit defining of coarse maps, in particular piecewise ones.

\begin{proposition}
	Let $X$ and $Y$ be coarse spaces such that $\B_X$ is closed under finite unions, and $f:X\to Y$ be a well-defined piecewise function with a finite number of pieces. Then $f$ is proper if and only if each piece is proper.
\end{proposition}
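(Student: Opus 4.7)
The plan is to write $X = X_1 \cup \dots \cup X_n$ where $f_i \coloneqq f|_{X_i} : X_i \to Y$ is the $i$-th piece of $f$, each $X_i$ carrying the coarse subspace structure $\C_{X_i}^X$. The first observation I would record is that, thanks to the definition of the subspace coarse structure, a subset $S \subseteq X_i$ is bounded in $X_i$ if and only if $S \times S \in \C_X$, i.e.\ if and only if $S$ is bounded in $X$ as an ambient subset. This makes the interplay between the pieces and the whole invisible on the level of bounded sets, which is the only level at which properness cares.

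For the forward direction, I would take a bounded $B \subseteq Y$ and note the set identity $f_i^{-1}(B) = f^{-1}(B) \cap X_i \subseteq f^{-1}(B)$. Properness of $f$ makes $f^{-1}(B)$ bounded in $X$, hence $f_i^{-1}(B)$, as a subset of a controlled product in $\C_X$, is also controlled and thus bounded in $X_i$ by the observation above. This gives properness of each piece with no hypothesis on $\B_X$ needed.

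For the converse, again let $B \subseteq Y$ be bounded. Since $f$ is defined piecewise on all of $X$, we have
\begin{equation*}
    f^{-1}(B) = \bigcup_{i=1}^n f_i^{-1}(B).
\end{equation*}
Each $f_i^{-1}(B)$ is bounded in $X_i$ by hypothesis, hence bounded in $X$ by the opening observation. The finite union $f^{-1}(B)$ is then bounded in $X$ precisely because $\B_X$ is assumed closed under finite unions, giving properness of $f$.

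The main (and only) obstacle worth flagging is the role of the finite-union hypothesis on $\B_X$: in an arbitrary coarse space the union of two bounded sets need not be bounded (they could be far apart in an uncontrolled way), so this assumption is genuinely used in the backward direction and cannot be dropped. The forward direction, by contrast, is purely set-theoretic and requires neither the finiteness of the partition nor the closure property on $\B_X$.
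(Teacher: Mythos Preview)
Your proof is correct and follows essentially the same approach as the paper: the paper declares the forward implication ``straightforward'' and for the converse writes $f^{-1}[B]=\bigcup_{i}f_i^{-1}[B]\in\B_X$ using the finite-union hypothesis, exactly as you do. Your version is more explicit about the subspace bounded-set identification and the forward direction, but there is no substantive difference.
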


\begin{proof}
	Let $f$ a well-defined piecewise function with $n$ pieces for some $b$, i.e we have $f(x)=f_i(x)$ for some $i \in \boldsymbol{n}$, subset $A_i \subseteq X$, and function $f_i:A_i \to Y$. The forwards implication is straightforward, so for the sake of the backwards implication let $B\in\B_Y$. Then, if each $f_i$ is proper, we have
	\begin{equation*}
		f^{-1}[B]=\bigcup_{i \in\boldsymbol{n}}f^{-1}_i[B] \in \B_X,
	\end{equation*}
	and $f$ is proper.
\end{proof}

\begin{remark}
	Note, not all coarse spaces $X$ have the property that $\B_X$ is closed under finite unions, for example this is the case for the minimum coarse structure on some set $X$, $\C_X^{\text{min}}$. This does, however, include the class of all metric spaces.
\end{remark}

Checking whether a piecewise function is controlled is less straightforward. We fortunately have the following definition and proceeding lemma providing a criteria to make this job easier in many cases.

\begin{definition}
	Let $X$ be a coarse space and $A,B \subseteq X$ such that $X=A \cup B$. Then we say the decomposition $X=A \cup B$ \textbf{coarsely excisive} if for each $U \in \C_X$ there exists $U' \in \C_X$ such that $U[A]\cap U[B] \subseteq U'[A \cap B]$.
\end{definition}

\begin{lemma}[Lemma 2.2.7, \cite{mohamad2013coarse}]
	Let $X$ and $Y$ be coarse spaces and $X=A\cup B$ be a coarsely excisive decomposition. Suppose $f:X \to Y$ is a function. Then if $f|_A$ and $f|_B$ are coarse maps if and only if $f$ is a coarse map.
\end{lemma}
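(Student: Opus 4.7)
The plan is to prove the biconditional in two directions, with essentially all of the content living in the backward direction. The forward direction is formal: the subspace structure $\C_A^X$ consists by definition of those $E \in \C_X$ lying inside $A \times A$, so $(f|_A \times f|_A)(E) = (f \times f)(E) \in \C_Y$ for every $E \in \C_A^X$; and for bounded $B' \subseteq Y$, $(f|_A)^{-1}(B') = f^{-1}(B') \cap A$ sits inside the bounded set $f^{-1}(B')$, hence is bounded in $A$. The same argument handles $f|_B$.

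For the backward direction, assume $f|_A$ and $f|_B$ are coarse. Controlledness is where the work happens. Given $E \in \C_X$, I would first replace it by $\text{sym}(E) \cup \Delta_X$ (still controlled, by the coarse space axioms) and then decompose according to the set-theoretic product $X = A \cup B$:
\begin{equation*}
E = E_{AA} \cup E_{AB} \cup E_{BA} \cup E_{BB}, \qquad E_{PQ} := E \cap (P \times Q).
\end{equation*}
The diagonal pieces $E_{AA}$ and $E_{BB}$ lie in $\C_A^X$ and $\C_B^X$ respectively, so their $(f \times f)$-images are controlled in $Y$ immediately from the hypothesis.

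The main obstacle is the off-diagonal piece $E_{AB}$ (with $E_{BA}$ symmetric), and it is here that coarse excisiveness is indispensable. For any $(a,b) \in E_{AB}$, the symmetry of $E$ together with $\Delta_X \subseteq E$ places both $a$ and $b$ inside $E[A] \cap E[B]$, so coarse excisiveness supplies a single $U' \in \C_X$ depending only on $E$, and points $p, q \in A \cap B$, with $(p, a), (q, b) \in U'$. I would then factor $(f(a), f(b))$ through the composition
\begin{equation*}
(f(a), f(p)) \circ (f(p), f(q)) \circ (f(q), f(b)),
\end{equation*}
whose outer pieces are controlled via $f|_A$ on $U' \cap (A \times A)$ and $f|_B$ on $U' \cap (B \times B)$, and whose middle piece is controlled via $f|_A$ on $(U' \circ E \circ U') \cap (A \times A)$, exploiting $p, q \in A \cap B \subseteq A$. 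This places $(f \times f)(E_{AB})$ inside a fixed controlled set of $Y$ depending only on $E$, and assembling the four pieces gives $(f \times f)(E) \in \C_Y$.

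Properness is the simpler half: for bounded $B' \subseteq Y$, we have $f^{-1}(B') = (f|_A)^{-1}(B') \cup (f|_B)^{-1}(B')$, a union of sets bounded in $A$ and $B$ and therefore in $X$, and the cross-terms in $f^{-1}(B') \times f^{-1}(B')$ are again controlled by routing through $A \cap B$ via the same coarse excisive hypothesis applied to the bounded-witnessing controlled sets of each piece. The main obstacle throughout is clearly the bridging factorisation in the controlled case; once that mechanism is in place, the rest is bookkeeping.
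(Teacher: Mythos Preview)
The paper does not actually supply a proof of this lemma; it is quoted verbatim from \cite{mohamad2013coarse} and used as a black box, so there is nothing to compare your argument against on the level of strategy.

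On its merits: your forward direction and your controlledness argument for the backward direction are correct and are exactly the standard route --- the decomposition $E = E_{AA} \cup E_{AB} \cup E_{BA} \cup E_{BB}$ together with the bridging factorisation through points of $A \cap B$ supplied by coarse excisiveness is precisely how one handles the off-diagonal pieces.

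There is, however, a genuine gap in your properness argument. You assert that the cross-terms in $f^{-1}(B') \times f^{-1}(B')$ can be controlled ``by routing through $A \cap B$ via the same coarse excisive hypothesis applied to the bounded-witnessing controlled sets of each piece,'' but this does not go through as written. The coarse excisive condition gives you control over $U[A] \cap U[B]$ for \emph{controlled} $U$; it does not let you connect an arbitrary bounded $P \subseteq A$ to an arbitrary bounded $Q \subseteq B$ when $P$ and $Q$ are far from $A \cap B$ (or when $A \cap B = \emptyset$). Concretely, take $X = \{a,b\}$ with the minimal coarse structure $\C_X^{\text{min}}$, set $A = \{a\}$, $B = \{b\}$, and let $f : X \to \{*\}$ be the unique map. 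The decomposition is coarsely excisive (trivially, since $U[A] \cap U[B] = \emptyset$ for every controlled $U$), and $f|_A$, $f|_B$ are coarse; but $f^{-1}(\{*\}) = X$ is not bounded, so $f$ is not proper. Thus the lemma as stated is actually false in this generality, and no routing argument can rescue it.

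This does not affect the paper's applications: the only use is via the follow-up corollary on metric coarse structures, where $\B_X$ is closed under finite unions and properness is immediate from $f^{-1}(B') = (f|_A)^{-1}(B') \cup (f|_B)^{-1}(B')$. So the honest fix is either to add the hypothesis that $\B_X$ is closed under finite unions (which the paper itself flags as a nontrivial condition just after this lemma), or to restrict the claim to controlledness.
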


\begin{proposition}[Corollary 2.2.8, \cite{mohamad2013coarse}]\label{crs_excisive_for_R}
	Let $X$ be a coarse space and let $q:X \to \R_{\geq 0}$ be some controlled map. Let
	\begin{equation*}
		A = \{(x,t)\in X \times \R_{\geq 0}\ |\ t \leq q(x) \},
	\end{equation*}
	and
	\begin{equation*}
		B = \{(x,t)\in X \times \R_{\geq 0}\ |\ t \geq q(x) \}.
	\end{equation*}
	Then $X=A\cup B$ is a coarsely excisive decomposition.
\end{proposition}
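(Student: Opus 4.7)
The plan is to unpack the definition of coarsely excisive and produce an explicit $U'$ from a given $U \in \C_{X \times \R_{\geq 0}}$. First I will fix $U \in \C_{X \times \R_{\geq 0}}$ and extract its two projections: $V \coloneq \pi_{X \times X}(U) \in \C_X$ and $W \coloneq \pi_{\R_{\geq 0} \times \R_{\geq 0}}(U) \in \C_{\R_{\geq 0}}^d$, the latter being contained in some metric strip, so there exists $R > 0$ with $|t_1 - t_2| < R$ for all $((x_1, t_1),(x_2,t_2)) \in U$. Since $q$ is controlled and $V \in \C_X$, the set $(q \times q)(V)$ lies in $\C_{\R_{\geq 0}}^d$, so there is some $M > 0$ bounding $|q(x_1) - q(x_2)|$ whenever $(x_1, x_2) \in V$.

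Next, I will take an arbitrary $(x,t) \in U[A] \cap U[B]$ and produce a witness in $A \cap B$. By assumption there exist $(x_A, t_A) \in A$ and $(x_B, t_B) \in B$ with both $((x_A, t_A),(x,t))$ and $((x_B, t_B),(x,t))$ in $U$. The natural candidate witness is $(x, q(x))$, which lies in $A \cap B$ since $q(x) \leq q(x)$ and $q(x) \geq q(x)$. I then need to bound $|t - q(x)|$: using $t_A \leq q(x_A)$, $t_B \geq q(x_B)$, the $R$-bound from $W$, and the $M$-bound from controlledness of $q$, I get
\begin{align*}
t - q(x) &\leq t_A + R - q(x) \leq q(x_A) + R - q(x) \leq M + R, \\
t - q(x) &\geq t_B - R - q(x) \geq q(x_B) - R - q(x) \geq -(M + R).
\end{align*}

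Finally, I define
\begin{equation*}
U' \coloneq \bigl\{\,((x_1,t_1),(x_2,t_2)) \in (X \times \R_{\geq 0})^2 \,\big|\, x_1 = x_2,\ |t_1 - t_2| \leq M + R \,\bigr\}.
\end{equation*}
This is a controlled subset of the product coarse structure, since its $X \times X$-projection lies in $\Delta_X \in \C_X$ and its $\R_{\geq 0} \times \R_{\geq 0}$-projection lies in $U^{M+R}_{\R_{\geq 0}} \in \C_{\R_{\geq 0}}^d$. Since $((x, q(x)),(x,t))$ satisfies both conditions, we obtain $(x,t) \in U'[A \cap B]$, completing the containment.

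I do not expect any serious obstacle here; the whole argument is essentially a bookkeeping exercise in the product coarse structure. The only subtle point worth writing carefully is that the constant $M$ arises not from a Lipschitz-type bound on $q$ (which is not available at this level of generality), but from the fact that $q$ is controlled and hence transports the controlled set $V$ to a bounded-diameter strip in $\R_{\geq 0}$.
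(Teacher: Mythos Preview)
The paper does not supply its own proof of this proposition; it is cited as Corollary 2.2.8 of \cite{mohamad2013coarse} and stated without argument. So there is nothing in the paper to compare your proof against directly.

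That said, your argument is correct and is precisely the natural verification one would expect. The key moves---extracting a bound $R$ from the $\R_{\geq 0}$-projection of $U$, extracting a bound $M$ from the controlledness of $q$ applied to the $X$-projection, and taking $(x,q(x))$ as the witness in $A \cap B$---are exactly right, and your chain of inequalities bounding $|t - q(x)|$ by $M+R$ goes through cleanly. The only cosmetic point: the statement as printed has the typo ``$X = A \cup B$'' where it should read ``$X \times \R_{\geq 0} = A \cup B$''; you have (correctly) read it that way.
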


%%%%%%%%%%%%%%%%%%%%%%%%%%%%%%%%%%%%%%%%%%%%%%%%%%%%%%%%%%%%%%%%%%%%%%%%%%%%%%%%%%%%%%%%%%%%%%%%%%%%%%%%%%%%%%%%%%%%%%%%%%%%%%%%%%%%%%%%%%%%%%%%%%%%%%%%%%%%%%%%%%%%%%%%%%%%%%%%%%%%%%%%%%%%%%%%%%%%%%%%%%%%%%%%%%%%%%%%%%%%%%%%%%%%%%%%%%%%%%%%%%%%%%%%%%%%%%%%%%%%%%%%%%%%%%%%%%%%%%%%%%%%%%%%%%%%%%%%%%%%%%%%%%%%%%%%

\section{Coarse Homotopy}

With this abstract notion of a coarse space in place, we can now define a notion of coarse homotopy theory. The following definition is adapted from Definition 2.1 of \cite{mitchener2020coarse}, the difference is only a small technical modification.

\begin{definition}
	Let $X$ be a coarse space and $q:X \to \R_{\geq 0}$ be a coarse map. We define the \textbf{$\boldsymbol{q}$-cylinder on X} by the set
	\begin{equation*}
		I_qX \coloneqq \{(x,t) \in X \times \R_{\geq 0}\ |\ t \leq q(x)\}
	\end{equation*}
	with the coarse subspace structure inherited from the product structure on $X \times \R_{\geq 0}$.
	
	Note, the inclusion maps $i_0, i_q : X \to I_qX$ defined by $i_0(x) \coloneqq (x,0)$ and $i_q(x) \coloneqq (x, q(x))$ are both coarse. We then say that a pair of parallel coarse maps $f,g:X \to Y$ between coarse spaces $X$ and $Y$ are \textbf{coarse homotopic} if there exists coarse maps $q:X \to \R_{\geq 0}$ and $H_q:I_qX \to Y$ such that the diagram
	\begin{center}
		\begin{tikzcd}
			& Y & \\
			X \arrow[r,"i_0" swap] \arrow[ur, "f"]& I_qX \arrow[u,"H_q" description] & X \arrow[l, "i_q"] \arrow[ul, "g" swap]
		\end{tikzcd}
	\end{center}
	commutes. 
	
	Further, we call a coarse map $f:X \to Y$ a \textbf{coarse homotopy equivalence} if there exists a further coarse map $f':Y \to X$ such that $gf$ is coarse homotopic to $1_X$ and $fg$ is coarse homotopic to $1_Y$. In which case, we say that $X$ and $Y$ are coarse homotopy equivalent.
\end{definition}

\begin{theorem}{(Theorem 2.4 in \cite{mitchener2020coarse})}\label{crs_hom_equiv_rel}
	The relation of coarse homotopy is an equivalence relation, denoted $\overset{\text{Crs}}{\sim}$.
\end{theorem}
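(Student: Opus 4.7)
The plan is to verify the three axioms of an equivalence relation separately by constructing explicit coarse homotopies. Throughout, the key technical ingredient for building and checking such homotopies will be Proposition \ref{crs_excisive_for_R}, which lets us confirm that piecewise coarse maps defined by cutting a cylinder along a controlled height function are themselves coarse.

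For reflexivity, given any coarse $f:X\to Y$, take $q$ to be the zero map $q\equiv 0$, which is trivially coarse. Then $I_qX=X\times\{0\}$ is coarsely isomorphic to $X$, the two inclusions $i_0$ and $i_q$ coincide, and we can set $H_q(x,0)\coloneqq f(x)$, which is coarse as the composite of $f$ with the projection $X\times\{0\}\to X$. For symmetry, suppose $(q,H_q)$ witnesses $f\overset{\text{Crs}}{\sim} g$. Define the flip $\varphi:I_qX\to I_qX$ by $\varphi(x,t)\coloneqq (x,q(x)-t)$. This is a well-defined self-bijection; it is controlled because for any $(x,t)$ and $(x',t')$ related by a controlled set, the first coordinates are unchanged and
\[
|(q(x)-t)-(q(x')-t')|\leq |q(x)-q(x')|+|t-t'|,
\]
the first term bounded since $q$ is controlled. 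Properness of $\varphi$ follows by the same argument applied to its own inverse (which is itself). Then $H_q\circ\varphi$ is coarse and satisfies $H_q\circ\varphi\circ i_0=g$, $H_q\circ\varphi\circ i_q=f$, giving $g\overset{\text{Crs}}{\sim}f$.

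Transitivity will be the main obstacle. Given $(q_1,H_{q_1}):f\overset{\text{Crs}}{\sim}g$ and $(q_2,H_{q_2}):g\overset{\text{Crs}}{\sim} h$, set $q\coloneqq q_1+q_2$, which is coarse as a sum of coarse maps into $\R_{\geq 0}$. Define $H_q:I_qX\to Y$ by the piecewise rule
\[
H_q(x,t)\coloneqq\begin{cases} H_{q_1}(x,t) & \text{if } t\leq q_1(x),\\ H_{q_2}(x,t-q_1(x)) & \text{if } t\geq q_1(x),\end{cases}
\]
which is well-defined on the overlap $t=q_1(x)$ because both sides equal $g(x)$. To check $H_q$ is coarse, apply Proposition \ref{crs_excisive_for_R} to the controlled map $(x,t)\mapsto q_1(x)$ on $I_qX$ (restricted from $X$), giving a coarsely excisive decomposition $I_qX=A\cup B$ along the graph of $q_1$. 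Each piece is coarse: the first is literally $H_{q_1}$, and the second is the composition of the coarse shift $(x,t)\mapsto(x,t-q_1(x))$ from $B$ onto $I_{q_2}X$ with $H_{q_2}$. The Lemma preceding Proposition \ref{crs_excisive_for_R} then assembles the pieces into a coarse map, completing the argument. Properness of the glued map follows from the piecewise properness proposition, since $\B_X$ is closed under finite unions for the coarse spaces at hand (as they arise from metric or product structures with the same property).
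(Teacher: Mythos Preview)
The paper does not prove this theorem; it merely records it as Theorem~2.4 of \cite{mitchener2020coarse}. So there is no in-paper argument to compare against, and I will simply assess your proof on its own terms.

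Your symmetry and transitivity arguments are sound and are essentially the standard ones: the involution $(x,t)\mapsto(x,q(x)-t)$ for symmetry, and the concatenated cylinder $I_{q_1+q_2}X$ glued along the graph of $q_1$ for transitivity, with coarseness checked via the coarsely excisive Lemma preceding Proposition~\ref{crs_excisive_for_R}. (Incidentally, that Lemma already handles properness of the glued map, so your final sentence is unnecessary.)

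There is, however, a genuine gap in your reflexivity argument. You take $q\equiv 0$ and assert it is ``trivially coarse'', but under the paper's Definition of the $q$-cylinder, $q:X\to\R_{\geq 0}$ is required to be a \emph{coarse} map, i.e.\ controlled \emph{and proper}. The constant zero map is not proper unless $X$ is bounded: the preimage of the bounded set $\{0\}$ is all of $X$. So your construction of $I_0X$ is not a legitimate cylinder in this framework. The fix is to assume the existence of \emph{some} coarse map $q:X\to\R_{\geq 0}$ and use the constant homotopy $H_q(x,t)\coloneqq f(x)$ on $I_qX$; one checks that $H_q$ is proper because $H_q^{-1}[B]\subseteq f^{-1}[B]\times\big[0,\sup q(f^{-1}[B])\big]$ is bounded. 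This caveat is exactly the one the paper itself flags in the Remark following Proposition~\ref{comp_crs_hom}: reflexivity (and hence the full equivalence-relation statement) only holds on the subclass of coarse spaces admitting at least one coarse map to $\R_{\geq 0}$.
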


\begin{example}[Example 2.3, \cite{mitchener2020coarse}]\label{close_is_crs_hom}
	Let $X,Y$ be coarse spaces, and let $f,g:X \to Y$ be a pair of close coarse maps, and such that there exists some coarse map $q:X \to \R_{\geq 0}$. The map $H:I_{q+1}X \to Y$ defined by
	\begin{align*}
		H_q(x,s)\coloneqq \begin{cases}
			f(x) & s<1,\\
			g(x) & s\geq 1,
		\end{cases}
	\end{align*}
	is a coarse homotopy from $f$ to $g$. Further, if $f$ is a coarse equivalence, with some coarse inverse $f':Y \to X$. Then $f'f$ is close (and hence coarse homotopic) to $1_X$ and $ff'$ is close (and hence coarse homotopic) to $1_Y$, and so coarse equivalences are coarse homotopy equivalences.
\end{example}

\begin{proposition}\label{comp_crs_hom}
	Let $W,X,Y$, and $Z$ be a coarse spaces, $f:W \to X$, $g,h : X \to Y$, and $k:Y \to Z$ be coarse maps, and $H_q:I_q X \to Y$ be a coarse homotopy from $g$ to $h$ for some coarse map $q:X \to \R_{\geq 0}$. Then there is a coarse homotopy from $gf$ to $hf$, and from $kg$ to $kh$, respectively.
\end{proposition}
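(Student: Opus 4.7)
The plan is to construct explicit coarse homotopies in both cases by pre- and post-composing the given homotopy $H_q$ with the appropriate map, adjusting the parameter function where necessary.

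For the claim that $kg$ is coarse homotopic to $kh$, I would keep the same parameter map $q: X \to \R_{\geq 0}$, and hence the same cylinder $I_q X$, and take the composition $k \circ H_q : I_q X \to Z$ as the candidate homotopy. This is coarse as the composition of two coarse maps, and the boundary identities are immediate: $(k \circ H_q) \circ i_0 = k \circ g = kg$ and $(k \circ H_q) \circ i_q = k \circ h = kh$. This direction is essentially routine.

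For the claim that $gf$ is coarse homotopic to $hf$, the natural candidate for the new parameter is $q \circ f : W \to \R_{\geq 0}$, which is coarse as a composition of coarse maps. The key observation is that for any $(w, t) \in I_{q \circ f} W$ we have $t \leq q(f(w))$, so $(f(w), t) \in I_q X$. Thus the map $\tilde f : I_{q \circ f} W \to I_q X$ defined by $\tilde f(w, t) \coloneqq (f(w), t)$ is well-defined, and I would take $H' \coloneqq H_q \circ \tilde f$ as my coarse homotopy. Direct substitution gives $H' \circ i_0 = g \circ f$ and $H' \circ i_{q \circ f} = h \circ f$, as required.

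The main obstacle is checking that $\tilde f$ is itself a coarse map, so that $H'$ is coarse as a composition. Since $\tilde f$ is the restriction of the product map $f \times 1_{\R_{\geq 0}} : W \times \R_{\geq 0} \to X \times \R_{\geq 0}$, which is coarse by Proposition \ref{crs_maps_for_products}, being controlled is inherited from the ambient space via the coarse subspace structures on $I_{q \circ f}W$ and $I_q X$. Properness requires a slightly more careful argument: given a bounded $B \subseteq I_q X$, it is bounded in the ambient $X \times \R_{\geq 0}$, so $\pi_X(B)$ and $\pi_{\R_{\geq 0}}(B)$ are both bounded; then $\tilde f^{-1}(B) \subseteq f^{-1}(\pi_X(B)) \times \pi_{\R_{\geq 0}}(B)$, which is bounded in $W \times \R_{\geq 0}$ since $f$ is proper, and hence bounded in the subspace $I_{q \circ f} W$. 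Composing $\tilde f$ with $H_q$ then yields the desired coarse homotopy.
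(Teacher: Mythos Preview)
Your proposal is correct and follows essentially the same approach as the paper: for post-composition you take $k\circ H_q$ on the same cylinder, and for pre-composition you use the parameter $q\circ f$ and the restricted product map $\tilde f = (f\times 1_{\R_{\geq 0}})|_{I_{q\circ f}W}$, then compose with $H_q$. The paper presents this via two commutative diagrams without spelling out why the map $f\times 1_{\R}$ lands in $I_qX$ or why its restriction is coarse; your version supplies these checks explicitly, which is a welcome addition rather than a departure.
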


\begin{proof}
	The first homotopy follows from the fact that the diagram
	\begin{center}
		\begin{tikzcd}
			& Y & \\
			X \arrow[r,"i_0" swap] \arrow[ur, "g"]& I_qX \arrow[u,"H_q" description] & X \arrow[l, "i_q"] \arrow[ul, "h" swap]\\
			W \arrow[u, "f"] \arrow[r,"i_0" swap] & I_{fq}W \arrow[u, "f\times 1_{\R}" description] & W \arrow[u, "f" swap] \arrow[l, "i_{fq}"]
		\end{tikzcd}
	\end{center}
	commutes, and the second homotopy follows from the fact that the diagram
	\begin{center}
		\begin{tikzcd}
			& Z & \\
			& Y \arrow[u, "k" description]& \\
			X \arrow[r,"i_0" swap] \arrow[ur, "g"] \arrow[uur, bend left, "kg"] & I_qX \arrow[u,"H_q" description] & X \arrow[l, "i_q"] \arrow[ul, "h" swap] \arrow[uul, bend right, "kh" swap]
		\end{tikzcd}
	\end{center}
	also commutes.
\end{proof}

\begin{proposition}
	The relation of coarse homotopy equivalence is symmetric and transitive on the class of coarse spaces. Further, it is reflexive, and hence an equivalence relation, on the subclass of coarse spaces that have at least one coarse map with codomain $\R_{\geq 0}$.
\end{proposition}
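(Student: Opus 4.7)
The plan is to verify the three relations in turn, leaning on Proposition \ref{comp_crs_hom} and Theorem \ref{crs_hom_equiv_rel} for the work.

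\emph{Symmetry} is immediate: if coarse maps $f : X \to Y$ and $f' : Y \to X$ witness a coarse homotopy equivalence, so that $f'f \overset{\text{Crs}}{\sim} 1_X$ and $ff' \overset{\text{Crs}}{\sim} 1_Y$, then the same pair, with their roles swapped, witnesses that $Y$ is coarse homotopy equivalent to $X$.

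\emph{Transitivity} is the main computation. Suppose $X$ is coarse homotopy equivalent to $Y$ via $(f, f')$ and $Y$ is coarse homotopy equivalent to $Z$ via $(g, g')$. I will show that the composite pair $(gf, f'g')$ witnesses $X$ being coarse homotopy equivalent to $Z$. To verify $(f'g')(gf) \overset{\text{Crs}}{\sim} 1_X$, I start from $g'g \overset{\text{Crs}}{\sim} 1_Y$ and apply Proposition \ref{comp_crs_hom} to pre-compose with $f$ and post-compose with $f'$, obtaining $f'g'gf \overset{\text{Crs}}{\sim} f'f$. Since $f'f \overset{\text{Crs}}{\sim} 1_X$, the transitivity guaranteed by Theorem \ref{crs_hom_equiv_rel} closes the chain. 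A mirror argument starting from $ff' \overset{\text{Crs}}{\sim} 1_Y$ and using $gg' \overset{\text{Crs}}{\sim} 1_Z$ yields $(gf)(f'g') \overset{\text{Crs}}{\sim} 1_Z$.

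\emph{Reflexivity} is where the extra hypothesis enters. Given $X$ with a chosen coarse map $q : X \to \R_{\geq 0}$, I take $f = f' = 1_X$, so only $1_X \overset{\text{Crs}}{\sim} 1_X$ remains to be checked. Example \ref{close_is_crs_hom} supplies this directly, since $1_X$ is close to itself and $q$ is available; alternatively, the map $H_q : I_q X \to X$, $(x,t) \mapsto x$, is easily verified to be a coarse homotopy from $1_X$ to $1_X$. The real point of isolating reflexivity is that without some $q$ the cylinder $I_q X$ is not even defined, so there is no candidate homotopy from a map to itself. No step presents a serious obstacle; the main subtlety is simply pinning down precisely where the existence of $q$ is used.
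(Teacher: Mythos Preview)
Your proof is correct and follows essentially the same approach as the paper: symmetry from the definition, transitivity by taking the composite pair $(gf,f'g')$ and applying Proposition \ref{comp_crs_hom} together with Theorem \ref{crs_hom_equiv_rel} to reduce $f'g'gf$ to $f'f$ and then to $1_X$, and reflexivity via Example \ref{close_is_crs_hom} once a coarse $q:X\to\R_{\geq 0}$ is available. Your closing remark on why the hypothesis is needed for reflexivity is a helpful addition but otherwise there is nothing to distinguish the two arguments.
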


\begin{proof}
	For reflexivity, the identity is a coarse equivalence and hence a coarse homotopy equivalence by Example \ref{close_is_crs_hom}. Symmetry is similarly straightforward to show straight from the definition. Finally, to see coarse equivalence is transitive let $X,Y$, and $Z$ be coarse spaces and $f:X \to Y$ and $g:Y \to Z$ be coarse homotopy equivalences with coarse homotopy inverses $f':Y \to X$ and $g':Z \to Y$ respectively. Then, $gf$ is a coarse homotopy equivalence with coarse homotopy inverse $f'g'$, since $f'g'gf\overset{\text{Crs}}{\sim} f'1_Yf = f'f \overset{\text{Crs}}{\sim} 1_X$, using Proposition \ref{comp_crs_hom}. It can similarly be shown that $gff'g' \overset{\text{Crs}}{\sim} 1_Z$.
\end{proof}

\begin{remark}
	Note, not all coarse spaces have a coarse map into $\R_{\geq 0}$. This is an unfortunate aspect of this definition of coarse homotopy as it is possible for a pair of spaces to be equivalent but not coarse homotopy equivalent. Fortunately, for all the cases we are interested in this is not a problem. In particular, for any metric space $X$ and fixed base-point $x_0$, the map $X\to \R_{\geq 0}$ defined by $x\mapsto d_X(x_0,x)$ is coarse.
\end{remark}

We will again abuse terminology and notation and define a coarse homotopy between metric coarse maps (Definition \ref{metric_coarse_map_dfn}), to mean a coarse homotopy between induced coarse maps (Example \ref{metric_crs_=_crs_crs}) with respect to the metric coarse structures (Example \ref{coarse_space_basic_examples} (1)), and similarly for coarse homotopy equivalences between metric spaces.

\begin{proposition}\label{Rn+_hom_eq_to_R+}
	Let $n \geq 1$, then there is a coarse homotopy equivalence from $\R_{\geq 0}^n$ to $\R_{\geq 0}$.
\end{proposition}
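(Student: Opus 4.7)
The plan is to produce explicit coarse maps $f \colon \R_{\geq 0}^n \to \R_{\geq 0}$ and $g \colon \R_{\geq 0} \to \R_{\geq 0}^n$ and verify the two coarse homotopy conditions for an equivalence. Since the first projection is not proper, I would take the sum map $f(x_1,\dots,x_n) \coloneqq x_1+\dots+x_n$ together with the inclusion $g(t) \coloneqq (t,0,\dots,0)$. Both are manifestly Lipschitz; $f$ is proper because $f(x) \leq M$ forces each $x_i \leq M$, and $g$ is proper because it is an isometric embedding. One composite is immediate: $f\circ g = 1_{\R_{\geq 0}}$, so by Example \ref{close_is_crs_hom} it is coarse homotopic to the identity, and all the substance of the argument will go into showing $g\circ f \overset{\text{Crs}}{\sim} 1_{\R_{\geq 0}^n}$, where $g\circ f(x) = (x_1+\dots+x_n,0,\dots,0)$.

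Rather than build a single homotopy pushing every coordinate into the first simultaneously, I would factor $g\circ f$ through simpler transfer maps. For $2 \leq i \leq n$, set
\[
  \phi_i(x) \coloneqq (x_1 + x_i,\ x_2,\dots,x_{i-1},\ 0,\ x_{i+1},\dots,x_n),
\]
which sends the $i$-th coordinate into the first while leaving the others fixed. A short induction gives $\phi_n\circ\phi_{n-1}\circ\cdots\circ\phi_2 = g\circ f$. To produce a coarse homotopy from $1_{\R_{\geq 0}^n}$ to each $\phi_i$, I take the control function $q(x) \coloneqq x_1+\dots+x_n$, which is clearly coarse on $\R_{\geq 0}^n$, and the homotopy
\[
  H_i(x,s) \coloneqq \bigl(x_1 + \min(s,x_i),\ x_2,\dots,x_{i-1},\ \max(x_i-s,0),\ x_{i+1},\dots,x_n\bigr)
\]
on $I_q \R_{\geq 0}^n$. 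Evaluating at $s=0$ returns $x$, and at $s=q(x) \geq x_i$ returns $\phi_i(x)$, so the boundary conditions are automatic.

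The main technical step will be verifying that each $H_i$ is itself coarse. Controlledness is clear, since $\min$ and $\max$ are $1$-Lipschitz and so the whole formula is globally Lipschitz in $(x,s)$. Properness is the one place I expect any bookkeeping, and it falls out of a short case analysis on whether $s \leq x_i$, combined with the cylinder constraint $s \leq q(x) = \sum_j x_j$ on $I_q\R_{\geq 0}^n$. Once each $H_i$ is in hand as a coarse homotopy $1_{\R_{\geq 0}^n} \overset{\text{Crs}}{\sim} \phi_i$, Proposition \ref{comp_crs_hom} lets me pre-compose with $\phi_{i-1}\circ\cdots\circ\phi_2$ to obtain $\phi_{i-1}\circ\cdots\circ\phi_2 \overset{\text{Crs}}{\sim} \phi_i\circ\cdots\circ\phi_2$, and chaining these through $i=2,\dots,n$ together with transitivity (Theorem \ref{crs_hom_equiv_rel}) delivers $1_{\R_{\geq 0}^n} \overset{\text{Crs}}{\sim} g\circ f$, which finishes the proof.
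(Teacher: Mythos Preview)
Your proof is correct, but it takes a different route from the paper. The paper first conjugates by a rotation, identifying $\R_{\geq 0}^n$ isometrically with the cone $X_n=\{(x_1,\dots,x_n)\in\R^n : x_1\geq 0,\ |x_i|\leq x_1\}$; on $X_n$ the \emph{ordinary} projection $\pi(x_1,\dots,x_n)=(x_1,\dots,x_{n-1})$ is proper (the bound $|x_n|\leq x_1$ does the work), and the single homotopy $H((x_1,\dots,x_n),s)$ that pushes $x_n$ toward $0$ then gives $X_n\overset{\text{Crs}}{\sim} X_{n-1}$, with an outer induction on $n$. You instead stay in the original orthant, replace the non-proper projection by the sum map, and factor $g\circ f$ as the chain $\phi_n\circ\cdots\circ\phi_2$ of one-coordinate transfers, each individually homotopic to the identity via your $H_i$.

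What each approach buys: the paper's rotation makes the homotopy itself extremely clean (a single coordinate decays to $0$, and the coarsely excisive decomposition of Proposition~\ref{crs_excisive_for_R} handles controlledness in one shot), at the price of setting up and justifying the change of coordinates. Your approach is more direct and needs no auxiliary model of $\R_{\geq 0}^n$, but the homotopy is slightly more elaborate and you have to iterate through the $\phi_i$ and invoke Proposition~\ref{comp_crs_hom} and Theorem~\ref{crs_hom_equiv_rel} to chain them. The properness argument you sketch for $H_i$ is right: on the region $s\leq x_i$ the first output coordinate bounds $x_1$ and $s$, and then $\max(x_i-s,0)$ bounds $x_i$; on $s>x_i$ the cylinder constraint $s\leq\sum_j x_j$ closes the loop. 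One minor remark: since $f\circ g=1_{\R_{\geq 0}}$ on the nose, you do not even need Example~\ref{close_is_crs_hom} for that direction.
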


\begin{proof}
	We will show that for all $n\geq 2$, there is a coarse homotopy equivalence from $\R_{\geq 0}^n$ to $\R_{\geq 0}^{n-1}$, and then by a basic induction argument the result follows. The natural thought is to reduce the dimension of $\R_{\geq 0}^n$ by way of a usual projection, however the canonical projection maps are not proper. Instead, notice for each $n\geq 1$ there is an isometry (and hence a coarse equivalence and coarse homotopy equivalence)
	\begin{equation*}
		X_n \coloneq \{(x_1,...,x_n)\in \R^n\ |\ x_1 \geq 0, \text{ and } |x_i|\leq x_1 \text{ for } i \in\{2,...,n\}\} \cong \R^n_{\geq 0},
	\end{equation*}
	via a $-\frac{\pi}{4}$ rotation in each dimension. Then, we will show there is a coarse homotopy equivalence from $X_n$ to $X_{n-1}$ $\big($and hence from $\R_{\geq 0}^n$ to $\R_{\geq 0}^{n-1}\big)$. Define a pair of maps $\text{inc}:X_{n-1} \to X_n$ and $\pi:X_n \to X_{n-1}$, by 
	\begin{equation*}
		\text{inc}(x_1,...,x_{n-1}) \coloneq (x_1,...,x_{n-1},0)
	\end{equation*}
	and 
	\begin{equation*}
		\pi(x_1,...,x_{n-1},x_n) \coloneqq (x_1,...,x_{n-1})
	\end{equation*}
	respectively.
	
	The inclusion inc is easily seen to be coarse. Further, $\pi$ is easily seen to be controlled. To see it is proper let $B \in \B_X$. Let $x=\text{sup}_{(x_1,...,x_{n-1})\in \B}(x_1)$. Then
	\begin{equation*}
		B \subseteq \{(x_1,...,x_n)\in \R^n\ |\ x_1 \leq x, \text{ and } |x_i|\leq x_1 \text{ for } i \in\{2,...,n\}\} \in \B_{X_{n-1}},
	\end{equation*}
	then
	\begin{align*}
		\pi^{-1}[B] & \subseteq \pi^{-1}[\{(x_1,...,x_{n-1})\in \R^n\ |\ x_1 \leq x, \text{ and } |x_i|\leq x_1 \text{ for } i \in\{2,...,n-1\}\}]\\
		& = \{(x_1,...,x_{n})\in \R^n\ |\ x_1 \leq x, \text{ and } |x_i|\leq x_1 \text{ for } i \in\{2,...,n\}\} \in \B_{X_n},
	\end{align*}
	and $\pi$ is proper. A very similar argument can be used to show the projection $\pi_{x_1}:X_n \to \R_{\geq 0}$ where $\pi_{x_1}(x_1,...,x_n)\coloneq x_1$ is coarse.
	
	Then, it is quick to see $\pi \circ \text{inc} = 1_{X_{n-1}}$. Then, the map $H:I_{\pi_{x_1}}X_n \to X_n$ defined as follows gives a coarse homotopy from $\text{inc}\circ \pi$ to $1_{X_n}$.
	\begin{equation*}
		H\big((x_1,...,x_n),s\big) \coloneq \begin{cases}
			(x_1,...,x_{n-1},x_n -s) & x_n \geq s,\\
			(x_1,...,x_{n-1},x_n +s) & x_n \leq - s,\\
			(x_1,...,x_{n-1},0) & |x_n| \leq s.
		\end{cases}
	\end{equation*}
	Then $H$ is coarse by Proposition \ref{crs_excisive_for_R}, and we are done.
\end{proof}

%%%%%%%%%%%%%%%%%%%%%%%%%%%%%%%%%%%%%%%%%%%%%%%%%%%%%%%%%%%%%%%%%%%%%%%%%%%%%%%%%%%%%%%%%%%%%%%%%%%%%%%%%%%%%%%%%%%%%%%%%%%%%%%%%%%%%%%%%%%%%%%%%%%%%%%%%%%%%%%%%%%%%%%%%%%%%%%%%%%%%%%%%%%%%%%%%%%%%%%%%%%%%%%%%%%%%%%%%%%%%%%%%%%%%%%%%%%%%%%%%%%%%%%%%%%%%%%%%%%%%%%%%%%%%%%%%%%%%%%%%%%%%%%%%%%%%%%%%%%%%%%%%%%%%%%%%%%%%%%%%%%%%%%%%%%%%%%%%%%%%%%%%%%%%%%%

\section{Coarse Path Components}

With a notion of coarse homotopy, an immediate next step is to consider coarse homotopy invariants. This is exactly where the authors of \cite{mitchener2020coarse} take this be defining a coarse notion of the usual homotopy groups. Here, we are only concerned with coarse path components.

Before we can define these, however, we need a notion of a coarse path. The following construction which is designed to promote small-scale structures to large-scale ones, including the usual topological $n$-dimensional unit cube, will aid in this endeavour.

\begin{definition}[Definition 3.1, \cite{mitchener2020coarse}]\label{cone_definition}
	Let $X$ be a bounded subset of $\mathbb{R}^n$ with the usual $l^2$ metric, $R>0$. We define the \textbf{metric cone of $X$ truncated at $R$} by
	\begin{equation*}
		c_R(X) \coloneqq \{(hx,h) \in \mathbb{R}^{n+1}\ |\ h\geq R,\ x \in X \},
	\end{equation*}
	equipped with the induced subspace metric. In the case where $R=0$ we will simply denote $c_0(X)$ by $c(X)$. The inclusion $c_R(X) \hookrightarrow c(X)$ is a coarse equivalence. Further, we will denote the canonical continuous and controlled (but not proper) projection by
	\begin{align*}
		\text{proj}_R : c_R(X) & \to X,\\
		(hx,h) & \mapsto x,
	\end{align*}
	for each $R>0$, and the canonical continuous, controlled, and proper inclusion by
	\begin{align*}
		\text{inc}_h : X & \to c_R(X), \\
		x & \mapsto (hx,h)
	\end{align*}
	for each $R\geq 0$ and $h \geq R$. Finally, we will define the $h$\textbf{-layer} of $c_R(X)$ to be the subset
	\begin{equation*}
		\big\{(h'x,h') \in c_R(X)\ |\ h=h' \big\}
	\end{equation*}
	for some fixed $h\geq R$.
\end{definition}

\begin{example}
	For each $n \in \Z_{\geq 1}$ there is a coarse isomorphism $c(S^n)\cong \R^{n+1}$. Further, there is a coarse isomorphism $c_1([0,1]) \cong I_{1_{\R_{\geq 0}}+1}\R_{\geq 0}$, defined by simply by an affine shift of the cylinder.
\end{example}	

\begin{definition}\label{cone_ray}
	Let $X$ be a bounded subset of $\R^n$ for some $n\geq 1$ and let $x\in X$. Then, for any $R\geq 0$, we will denote the canonical ray $\R_{\geq 0} \to c_R(X)$ defined by $h \mapsto (hx,h)$ for $h\geq R$ and $h \mapsto (Rx,R)$ for $h\leq R$, by $\alpha_x$.
\end{definition}

Notice $\alpha_x$ is $1$-Lipschitz, and an isometry when restricted to $\alpha_x|_{h\geq R}$.

\begin{proposition}\label{c(X)_is_proper}
	Let $X$ be a compact subset of $\R^n_{\geq0}$ for some $n\geq 0$. The $c(X)$ is proper. 
\end{proposition}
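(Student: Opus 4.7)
The plan is to show that $c(X)$ sits as a closed subset of $\R^{n+1}$ equipped with the standard Euclidean metric. Once this is established, properness follows immediately: $\R^{n+1}$ is proper by Heine--Borel, and (as noted in the example following Definition \ref{proper_space_dfn}) any closed subspace of a proper metric space is itself proper, since a subset is closed and bounded in the subspace if and only if it is closed and bounded in the ambient space.

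To prove closedness, I would take an arbitrary sequence $\{(h_k x_k, h_k)\}_{k \in \mathbb{N}}$ in $c(X)$ that converges in $\R^{n+1}$ to some point $(y, h)$, and show $(y, h) \in c(X)$. Continuity of the coordinate projections gives $h_k \to h$ and $h_k x_k \to y$, and the argument then splits into two cases according to the value of $h$.

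In the case $h > 0$, eventually $h_k > 0$, so one may write $x_k = (h_k x_k)/h_k$ and conclude $x_k \to y/h$. Since $X$ is compact, it is closed in $\R^n_{\geq 0}$, so $y/h \in X$, and then $(y, h) = \bigl(h \cdot (y/h),\ h\bigr) \in c(X)$ by the definition of the cone. In the case $h = 0$, compactness (hence boundedness) of $X$ yields some $M \geq 0$ with $\|x_k\| \leq M$ for all $k$, so $\|h_k x_k\| \leq h_k M \to 0$, forcing $y = 0$; then $(0, 0) = (0 \cdot x, 0) \in c(X)$ for any choice of $x \in X$ (and the case $X = \emptyset$ is trivial since $c(X) = \emptyset$ is vacuously closed). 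This completes the proof.

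There is no real obstacle here; the only point requiring a small moment of care is the boundary case $h = 0$, where one cannot directly solve for $x_k$ by dividing, and one must instead use boundedness of $X$ to control the sequence $h_k x_k$.
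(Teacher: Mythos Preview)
Your proof is correct and follows the same strategy as the paper: show that $c(X)$ is a closed subset of Euclidean space and then invoke Heine--Borel. The paper's argument is a one-liner that simply asserts closedness of $c(X)$ from compactness of $X$ without justification, whereas you actually verify this via the sequential criterion and handle the boundary case $h=0$ carefully; your version is more complete but not a different approach.
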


\begin{proof}
	Since $X$ is compact, it is closed, and so $c(X)$ is a closed subset of $\R^n_{\geq0}$, and thus inherits properness from the euclidean metric on $\R^n_{\geq0}$.
\end{proof}

Now we can define a coarse notion of a path.

\begin{definition}\label{pathdfn}
	Let $X$ be a coarse space. Let $\alpha,\alpha':\R_{\geq 0} \to X$ be coarse rays in $X$. A \textbf{coarse $1$-path} $\varphi$ from $\alpha$ to $\alpha'$ is a coarse map $\varphi : c_1([0,1]) \to X$, such that $\varphi|_{c_1(\{0\})} = \alpha\circ \pi_h$ and $\varphi|_{c_1(\{1\})} = \alpha'\circ \pi_h$, denoted $\varphi : \alpha \rightsquigarrow \alpha'$,
\end{definition}

We will often refer to a coarse $1$-path $\varphi : c_1([0,1]) \to X$ from $\alpha$ to $\alpha'$ by just $\varphi$, terming $\alpha$ and $\alpha'$ the \textbf{endrays} of $\varphi$.

\begin{proposition}\label{nplaneabovecrs}
	Let $q: \R_{\geq 0} \to \R_{\geq 0}$ be a coarse map. Then there exists some $A_1,B_1 \geq 1$ such that $q(h)\leq A_1h+B_1$ for all $h \in \R_{\geq 0}$.
\end{proposition}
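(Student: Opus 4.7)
The plan is to read this off as essentially immediate from Proposition \ref{aL_implies_crd}. Since $\R_{\geq 0}$ is a geodesic space (the identity map on any interval is a geodesic), it is quasi-geodesic, and so by Proposition \ref{aL_implies_crd} every controlled map $q:\R_{\geq 0}\to \R_{\geq 0}$ is in fact $(A,B)$-asymptotically Lipschitz for some $A, B \geq 0$. In particular,
\begin{equation*}
    q(h) = |q(h) - q(0)| + q(0) \leq A|h - 0| + B + q(0) = Ah + (B + q(0))
\end{equation*}
for every $h \in \R_{\geq 0}$, using that $q$ takes values in $\R_{\geq 0}$. Setting $A_1 := \max(A,1)$ and $B_1 := \max(B + q(0), 1)$ produces constants of size at least $1$ satisfying the desired bound.

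If one prefers a self-contained argument that avoids invoking Proposition \ref{aL_implies_crd}, the same inequality can be obtained directly from the controlled hypothesis. Choose $S > 0$ such that $|h-h'|<1$ implies $|q(h)-q(h')|<S$, which exists by taking $R=1$ in the definition of controlled. Then for any $h \geq 0$ one chains together at most $\lceil h \rceil + 1$ unit-length steps from $0$ to $h$ through the integers, obtaining
\begin{equation*}
    q(h) \leq q(0) + S(\lceil h \rceil + 1) \leq Sh + (q(0) + 2S),
\end{equation*}
after which we again round the constants up to $1$. Note that properness of $q$ plays no role.

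There is no real obstacle here; the statement is a one-line consequence of asymptotic Lipschitz behaviour on geodesic domains. The only mildly fussy point is the requirement $A_1, B_1 \geq 1$ rather than the more natural $A_1, B_1 \geq 0$, which is resolved by the final $\max$ step.
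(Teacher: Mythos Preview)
Your argument is correct and is essentially the same as the paper's: both invoke Proposition \ref{aL_implies_crd} to get the asymptotic Lipschitz estimate and then apply the triangle inequality through $q(0)$, with a final adjustment to force the constants to be at least $1$. One tiny slip: the equality $q(h) = |q(h)-q(0)| + q(0)$ fails when $q(h) < q(0)$; you only have (and only need) the inequality $q(h) \leq |q(h)-q(0)| + q(0)$, which is exactly what the paper uses.
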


\begin{proof}
	By Proposition \ref{aL_implies_crd}, $q$ is asymptotically Lipschitz, and so there exits some $A_0, B_0 \geq 1$ such that
	\begin{align*}
		d_{\R_{\geq 0}}(q(h),q(h')) \leq A_0\cdot d_{\R_{\geq 0}}(h,h')+B_0,
	\end{align*}
	for all $h,h' \in \R_{\geq 0}$. Then
	\begin{align*}
		q(h) & = d_{\R_{\geq 0}}(q(h),0)\\ 
		& \leq d_{\R_{\geq 0}}(q(h),q(0)) + d_{\R_{\geq 0}}(q(0),0)\\
		& \leq A_0\cdot d_{\R_{\geq 0}}(h,0)+B_0 + d_{\R_{\geq 0}}(q(0),0))\\
		& \leq A_0h +B_0 + d_{\R_{\geq 0}}(q(0,...,0),(0,...,0)),
	\end{align*}
	and hence simply define $A_1\coloneqq A_0$ and $B_1 \coloneqq B_0 + d_{\R_{\geq 0}}(q(0,...,0),(0,...,0))$.
\end{proof}

\begin{corollary}\label{homiffpath}
		Let $q: \R_{\geq 0} \to \R_{\geq 0}$ be a coarse map. Then there is a canonical coarse embedding $I_q\R_{\geq 0} \subseteq I_{\pi_h + 1}\R_{\geq 0}$. Further, if $X$ be a coarse space with coarse rays $\alpha, \alpha' : \R_{\geq 0} \to X$. Then there exists a coarse $1$-path $\varphi : \alpha \rightsquigarrow \alpha'$ if and only if the maps $\alpha, \alpha'$ are coarsely homotopic.
\end{corollary}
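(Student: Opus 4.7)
The plan is to split the corollary into its two assertions, applying Proposition \ref{nplaneabovecrs} and the coarse isomorphism $c_1([0,1]) \cong I_{1_{\R_{\geq 0}}+1}\R_{\geq 0}$ from the example in Section 6 to translate coarse $1$-paths and coarse homotopies into a common form.

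For the coarse embedding, Proposition \ref{nplaneabovecrs} gives $A_1, B_1 \geq 1$ with $q(h) \leq A_1 h + B_1$; thus set-theoretically $I_q\R_{\geq 0} \subseteq I_{A_1 \pi_h + B_1}\R_{\geq 0}$, and composition with the affine biLipschitz map $(h, t) \mapsto (A_1 h + B_1 - 1, t)$ (which lands in $I_{\pi_h + 1}\R_{\geq 0}$ since $t \leq A_1 h + B_1 = (A_1 h + B_1 - 1) + 1$) yields the required canonical coarse embedding. The backward direction of the iff now follows: a coarse $1$-path $\varphi: c_1([0,1]) \to X$ transports under $c_1([0,1]) \cong I_{\pi_h + 1}\R_{\geq 0}$ to a coarse map whose restrictions to the $i_0$ and $i_{\pi_h + 1}$ boundary rays are close to $\alpha$ and $\alpha'$ (the isomorphism introduces a harmless $h \mapsto h + 1$ reparametrization), and since close maps are coarse homotopic by Example \ref{close_is_crs_hom} and coarse homotopy is transitive by Theorem \ref{crs_hom_equiv_rel}, this gives $\alpha \overset{\text{Crs}}{\sim} \alpha'$.

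For the converse, suppose $H_q : I_q\R_{\geq 0} \to X$ is a coarse homotopy with $H_q \circ i_0 = \alpha$ and $H_q \circ i_q = \alpha'$. Define
\begin{equation*}
\psi : I_{\pi_h + 1}\R_{\geq 0} \to I_q\R_{\geq 0}, \qquad \psi(h, t) \coloneqq \left( h, \frac{q(h)\, t}{h + 1} \right),
\end{equation*}
which satisfies $\psi \circ i_0 = i_0$ and $\psi \circ i_{\pi_h + 1} = i_q$. The composite $H_q \circ \psi$, transported across $c_1([0,1]) \cong I_{\pi_h + 1}\R_{\geq 0}$, gives the required coarse $1$-path from $\alpha$ to $\alpha'$ (again modulo the harmless reparametrization absorbed by Example \ref{close_is_crs_hom}).

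The main obstacle is verifying that $\psi$ is coarse. Properness is routine from properness of $q$. Controlledness is where Proposition \ref{nplaneabovecrs} does the real work: the linear bound $q(h) \leq A_1 h + B_1$ ensures the vertical scaling factor $q(h)/(h + 1)$ is uniformly bounded by $A_1 + B_1$, and combining this with the asymptotic Lipschitzness of $q$ (Proposition \ref{aL_implies_crd}) a routine estimate on $\left| q(h) t / (h + 1) - q(h') t' / (h' + 1) \right|$ bounds it in terms of $|h - h'|$ and $|t - t'|$. Without this linear upper bound on $q$, the vertical rescaling in $\psi$ would in general fail to be controlled, so this is the technical heart of the argument.
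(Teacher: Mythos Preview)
Your argument is correct, and the overall shape matches the paper's: both reduce to the linear bound on $q$ from Proposition \ref{nplaneabovecrs} and the isomorphism $c_1([0,1]) \cong I_{1_{\R_{\geq 0}}+1}\R_{\geq 0}$, and the path-implies-homotopy direction is handled identically by transporting across that isomorphism.

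The genuine difference is in the homotopy-implies-path direction (and, less importantly, in the embedding). The paper uses a \emph{constant} rescaling of the second coordinate: it defines the $1$-path piecewise by $\varphi(ht,h) = H_q(h, A\cdot ht)$ for $ht \leq q(h)/A$ and $\varphi(ht,h) = \alpha'(h)$ beyond that, so the controlledness check is trivial on each piece and one appeals to Proposition \ref{crs_excisive_for_R} to glue. Your map $\psi(h,t) = (h, q(h)t/(h+1))$ instead rescales \emph{proportionally}, which gives a single formula with the correct boundary behaviour and avoids the piecewise/excision argument entirely, at the price of a more delicate controlledness estimate (the variable factor $q(h)/(h+1)$ must itself be shown to vary in a controlled way, which is precisely where both the linear bound and the asymptotic Lipschitzness of $q$ enter). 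For the embedding, the paper compresses the $t$-coordinate via $(h,s)\mapsto (h,s/A')$ while you stretch the $h$-coordinate; both are biLipschitz and either choice is fine. Overall your route is a little cleaner to state, while the paper's makes each verification more mechanical.
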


\begin{proof}
	Let $A,B \geq 1$ such that $q(h) \leq Ah + B$ for all $h$. Let $A' = \text{max}(A,B)$. Then the map $I_q\R_{\geq 0} \to I_{\pi_h + 1}\R_{\geq 0}$ defined $(h,s) \mapsto \big(h,\frac{s}{A}\big)$ gives the desired embedding. 

	Now suppose there is a coarse homotopy $H_q : I_q\R_{\geq 0} \to X$ between $\alpha$ and $\alpha'$. Then by the coarse isomorphism $I_{1_{\R_{\geq 0}} + 1}\R_{\geq 0} \cong c_1([0,1])$ we get a canonical embedding $I_q\R_{\geq 0} \hookrightarrow c_1([0,1])$. Then, the map $\varphi^A_q : c_1([0,1]) \to X$ defined  by
	\begin{align*}
		\varphi_q^A(ht,h) \coloneqq \begin{cases} 
			H_q(h,Aht) & 0 \leq ht \leq \frac{q(h)}{A},\\
			\alpha'(h) & \frac{q(h)}{A} \leq ht \leq h.
		\end{cases}
	\end{align*}
	is a coarse $1$-path $\varphi^A_q: \alpha \rightsquigarrow \alpha'$. For the converse, assume there is a coarse $1$-path $\varphi: \alpha \to \alpha'$. Then the composite
	\begin{equation*}
		I_{\pi_h+1}\R_{\geq 0} \xrightarrow{\cong} c_1([0,1]) \xrightarrow{\varphi} X
	\end{equation*}
	gives the desired coarse homotopy from $\alpha$ to $\alpha'$.
\end{proof}

\begin{proposition}\label{dimatatime}
	Let $X$ be a coarse space and $f:c_1([0,1])\to X$ be some map. Then $f$ is controlled if and only if for all $R>0$, $f\times f$ preserves the following family of controlled sets in $c_1([0,1])$,
	\begin{equation*}
		U_R \coloneqq \big\{\big((ht,h),(ht',h)\big)\ |\ d_{c_1([0,1])}\big((ht,,h),(ht',h)\big)<R \big\},
	\end{equation*}
	and
	\begin{equation*}
		V_R \coloneqq \left\{\big((ht,h),(ht,h')\big)\ |\  d_{c_1([0,1])}\big((ht,h),(ht,h')\big)<R \right\}.
	\end{equation*}
\end{proposition}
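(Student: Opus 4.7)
The plan is to prove the two directions separately. The forward direction is essentially immediate from definitions: since $c_1([0,1])$ carries the metric coarse structure, both $U_R$ and $V_R$ lie inside the $R$-neighborhood of the diagonal and are therefore controlled in $c_1([0,1])$. If $f$ is controlled, then $(f\times f)(U_R)$ and $(f\times f)(V_R)$ are controlled in $X$ by definition.

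For the reverse direction, let $E$ be an arbitrary controlled set in $c_1([0,1])$. The metric coarse structure gives some $R > 0$ with $E \subseteq E_R \coloneqq \{(p,q) \in c_1([0,1])^2 : d(p,q) < R\}$, so it suffices to prove $(f\times f)(E_R)$ is controlled in $X$. The key geometric observation is that any two points within Euclidean distance $R$ in $c_1([0,1])$ may be joined through a single intermediate point by a ``vertical'' move (same first coordinate, different heights) followed by a ``horizontal'' move (same height, different first coordinates), each of Euclidean length less than $R$. Precisely, given $p_1 = (y_1, h_1)$ and $p_2 = (y_2, h_2)$ with $d(p_1, p_2) < R$ and (without loss of generality) $h_1 \leq h_2$, the point $p_3 \coloneqq (y_1, h_2)$ again lies in $c_1([0,1])$ since $y_1 \leq h_1 \leq h_2$; then $(p_1, p_3) \in V_R$ (common first coordinate $y_1$, vertical distance $|h_1 - h_2| \leq d(p_1, p_2) < R$) and $(p_3, p_2) \in U_R$ (common height $h_2$, horizontal distance $|y_1 - y_2| \leq d(p_1, p_2) < R$). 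The symmetric case $h_2 < h_1$ is handled by taking $(y_2, h_1)$ as the intermediate point instead.

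Consequently, $E_R \subseteq (V_R \circ U_R) \cup (U_R \circ V_R)$, and thus
\[ (f\times f)(E_R) \subseteq \bigl((f\times f)(V_R) \circ (f\times f)(U_R)\bigr) \cup \bigl((f\times f)(U_R) \circ (f\times f)(V_R)\bigr), \]
which is a finite union of composites of controlled sets in $X$ and hence controlled by the coarse structure axioms. This makes $(f\times f)(E) \subseteq (f\times f)(E_R)$ controlled, completing the argument. The only subtlety I foresee is the case analysis on the ordering of $h_1, h_2$ needed to ensure the intermediate point lies inside the cone; beyond this bookkeeping, the argument is a direct application of the axioms for composites and subsets of controlled sets.
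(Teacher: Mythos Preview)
Your proof is correct and takes essentially the same approach as the paper. The paper's one-line justification for the backward direction, ``just an application of the triangle inequality,'' is precisely the intermediate-point argument you have written out: split an arbitrary $R$-close pair into a vertical step and a horizontal step, each of length $<R$, and then use the composition and subset axioms for controlled sets; your treatment simply makes this explicit, including the small bookkeeping on $h_1\le h_2$ needed to keep the intermediate point inside $c_1([0,1])$.
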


\begin{proof}
	The forward implication is trivial, and the backwards implication is just an application of the triangle inequality.
\end{proof}

With the definition of coarse $1$-paths, there is natural question to ask related to coarse path components. The following definition and examples can be found in \cite{mohamad2013coarse}.

\begin{definition}\label{pi01}
	Let $X$ be a coarse space. The set of \textbf{coarse path components} of $X$, denoted $\pi^{\text{Crs}}_0(X)$, is the set of coarse homotopy classes of maps $\R_{\geq 0} \to X$.
\end{definition}

\begin{example}
	\begin{enumerate}
		\item The set $\pi_0^{\text{Crs}}(\R_{\geq 0})$ has one element represented by $1_{\R_{\geq 0}}$.
		\item The set  $\pi_0^{\text{Crs}}(\mathbb{R})$ has two elements represented by the maps $x \mapsto x$ and $x \mapsto -x$ for all $x \in \R_{\geq 0}$ respectively.
		\item Let $B$ be a bounded space. Then $\pi_0^{\text{Crs}}(B) = \emptyset$ since there are no coarse maps $\R_{\geq 0} \to B$.
	\end{enumerate}
\end{example}

\begin{definition}
	Consider the geometric realization $|X|$ of some finite simplicial complex $X$, viewed as a subset of $\R^n$ for some $n\in \Z_{\geq 1}$ via barycentric co-ordinates (see section 2.1 of \cite{hatcheralgtop} for details). We will call a well-defined embedding of a realised finite simplicial complex into $\R^m$, for some potentially different $m\in \Z_{\geq 1}$, \textbf{piecewise linear} if it is linear with the respect to the barycentric co-ordinates when restricted to each simplex (see \cite{brehm1992linear} for details on piecewise linear embeddings). 
	
	By a \textbf{geometric finite simplicial complex in $\R^n$} we mean the image of a finite simplicial complex under its geometric realization and its piecewise linear embedding equipped with the subspace metric.
\end{definition}

\begin{theorem}[Theorem 5.6, \cite{mitchener2020coarse}]\label{crs_hom_grps_of_cone}
	Let $X$ be a geometric finite simplicial complex. Then there is a bijection $\pi_0(X) \cong \pi_0^{\text{Crs}}(c_1(X))$.
\end{theorem}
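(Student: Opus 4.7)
The plan is to construct a mutually inverse pair $\Phi : \pi_0(X) \to \pi_0^{\text{Crs}}(c_1(X))$ and $\Psi$ going the other way: $\Phi$ sends a path component $[x]$ to the coarse homotopy class of the radial ray $\alpha_x$ of Definition \ref{cone_ray}, while $\Psi$ projects each coarse ray to $X$ and extracts an ``eventual'' path component. Throughout I will use Corollary \ref{homiffpath} to work with coarse $1$-paths $c_1([0,1]) \to c_1(X)$ in place of arbitrary coarse homotopies.

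\emph{Forward map.} Each $\alpha_x$ is coarse. If $x, x'$ lie in the same path component of $X$, then since $X$ is a finite simplicial complex there is a uniformly Lipschitz edge path $\gamma : [0,1] \to X$ from $x$ to $x'$. The map $\varphi : c_1([0,1]) \to c_1(X)$ defined by $\varphi(ht, h) = (h\gamma(t), h)$ is then coarse: properness is immediate because bounded subsets of $c_1(-)$ are precisely those of bounded height, and controlledness reduces by Proposition \ref{dimatatime} to two one-variable checks, both of which follow from $\gamma$ being Lipschitz. Hence $\varphi$ is a coarse $1$-path from $\alpha_x$ to $\alpha_{x'}$ and $\Phi$ is well-defined.

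\emph{Backward map.} For a coarse ray $r : \R_{\geq 0} \to c_1(X)$, write $r(h) = (H(h)x(h), H(h))$ with $x(h) \in X$ and $H(h) \geq 1$. Since bounded subsets of $c_1(X)$ have bounded height, properness of $r$ forces $H(h) \to \infty$; controlledness of $r$ gives $|H(h) - H(h+1)| \leq S(1)$ and $|H(h)x(h) - H(h+1)x(h+1)| \leq S(1)$, from which the triangle inequality and boundedness of $X$ yield
\begin{equation*}
|x(h) - x(h+1)| \leq \frac{S(1)\bigl(1 + \mathrm{diam}(X)\bigr)}{H(h)} \xrightarrow[h \to \infty]{} 0.
\end{equation*}
Since $X$ has only finitely many open path components, $x(h)$ lies in a single component $C(r)$ for all $h$ large enough; set $\Psi([r]) = [C(r)]$. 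If $r \overset{\text{Crs}}{\sim} r'$ then by Corollary \ref{homiffpath} there is a coarse $1$-path $\varphi$ between them; subdividing the segment $\{(ht, h) : t \in [0,1]\}$ into unit-spaced pieces and running the same estimate on the resulting chain (whose image heights tend to infinity uniformly in $t$ by the properness of $\varphi$) shows $x(h)$ and $x'(h)$ eventually lie in the same component of $X$, so $\Psi$ respects coarse homotopy.

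\emph{Mutual inverse.} The composition $\Psi \circ \Phi$ is the identity by inspection. For $\Phi \circ \Psi = \mathrm{id}$, fix any $x_\infty \in C(r)$ and build a coarse $1$-path $r \rightsquigarrow \alpha_{x_\infty}$ in two halves: for $t \in [0, 1/2]$, interpolate radially from $(H(h)x(h), H(h))$ to $(hx(h), h)$ along the cone-line $s \mapsto (sx(h), s)$, which is controlled because $H(h)/h$ is bounded by Proposition \ref{nplaneabovecrs}; for $t \in [1/2, 1]$, trace a uniformly Lipschitz path in $C(r)$ from $x(h)$ to $x_\infty$, exactly as in the forward map. The hardest step will be this last one: the chosen paths must vary coherently with $h$ so that the assembled $\varphi$ remains controlled in the ``vertical'' direction of Proposition \ref{dimatatime}. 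The fact that $|x(h) - x(h+1)| \to 0$ makes this tractable, since for $h$ large $x(h)$ and $x(h+1)$ share a common simplex and one can linearly interpolate between the prescribed paths inside it.
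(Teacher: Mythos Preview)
The paper does not give its own proof of this statement: it is quoted as Theorem~5.6 of \cite{mitchener2020coarse} and used as a black box, with no argument supplied. There is therefore nothing in the present paper to compare your sketch against.

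For what it is worth, your outline is the natural one and is essentially what one would expect the cited proof to do: $\Phi$ via the radial rays $\alpha_x$ of Definition~\ref{cone_ray}, and $\Psi$ by projecting and using $H(h)\to\infty$ to force the projected points into a single component. The step you correctly flag as the delicate one---making the family of in-$X$ paths from $x(h)$ to $x_\infty$ vary controllably in $h$---is genuine; your proposed fix (eventually $x(h)$ and $x(h+1)$ share a simplex, so linearly interpolate) is workable for a finite complex, but you would still need to bound how this interacts with the radial half of the homotopy to pass the vertical check in Proposition~\ref{dimatatime}. An alternative that avoids this bookkeeping is to first reduce to rays with $H(h)=h$ (using Proposition~\ref{nplaneabovecrs}) and then argue component by component, but either route can be made to work.
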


Like in the topological setting, there is another natural way we may define the set of coarse path components of a coarse space using the paths we defined in Definition \ref{pathdfn}. We first need the following.

\begin{definition}\label{pathconcat}
	Let $X$ be a coarse space and $\alpha : \R_{\geq 0} \to X$ be a coarse ray in $X$. We define the \textbf{constant coarse $1$-path at} $\alpha$, denoted $K_{\alpha} : \alpha \rightsquigarrow \alpha$, by $K_{\alpha}(ht,h) \coloneqq \alpha(h)$ for all $(ht,h) \in c_1([0,1])$.
	
	Further let $\varphi : \alpha \rightsquigarrow \alpha'$ be a coarse $1$-path from $\alpha$ to another coarse ray $\alpha'$ in X. Then we define its \textbf{inverse coarse $1$-path} $\varphi^{-1} : \alpha' \rightsquigarrow \alpha$ by $\varphi^{-1}(ht,h)\coloneqq \varphi(h-ht,h)$ for all $(ht,h) \in c_1([0,1])$.
	
	Finally, let $\alpha''$ be another coarse ray in $X$ such that there exists a path $\varphi' : \alpha' \rightsquigarrow \alpha''$. Then we define their \textbf{coarse $1$-path composition}, denoted $\varphi*_1\varphi' : \alpha  \rightsquigarrow \alpha''$ by
	\begin{align*}
		\varphi*_1\varphi'(ht,h) \coloneqq \begin{cases}
			\varphi\big(2ht,h\big) & 0\leq ht \leq \frac{1}{2}h,\\
			\varphi'\big(2ht - h,h\big) & \frac{1}{2}h \leq ht \leq h,
		\end{cases}
	\end{align*}
	for all $(ht,h) \in c_1([0,1])$.
\end{definition}

All maps defined above are indeed coarse; the constant path and inverse path are easily seen to be coarse, and the composite path is coarse by Proposition \ref{crs_excisive_for_R}.

\begin{proposition}
	Let $X$ be a coarse space. Then there is an equivalence relation defined on the set of coarse rays in $X$ where for each pair of coarse rays $\alpha,\alpha' : \R_{\geq 0} \to X$, we have $\alpha \sim \alpha'$ if and only if there exists a coarse $1$-path $\varphi : \alpha \rightsquigarrow \alpha'$.
	
	Further, the set of equivalence classes is equivalent to $\pi_0^{\text{Crs}}(X)$.
\end{proposition}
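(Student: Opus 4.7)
The plan is to verify the three axioms of an equivalence relation and then invoke Corollary \ref{homiffpath} to identify this with $\pi_0^{\text{Crs}}(X)$ as defined in Definition \ref{pi01}. All of the necessary ingredients are already in place via Definition \ref{pathconcat}, which gives explicit constant, inverse and composite coarse $1$-paths, each of which has been noted to be a coarse map.

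First, for reflexivity, given any coarse ray $\alpha : \R_{\geq 0} \to X$, the constant coarse $1$-path $K_\alpha : \alpha \rightsquigarrow \alpha$ from Definition \ref{pathconcat} witnesses $\alpha \sim \alpha$. For symmetry, if $\varphi : \alpha \rightsquigarrow \alpha'$ is a coarse $1$-path, then $\varphi^{-1} : \alpha' \rightsquigarrow \alpha$ is again a coarse $1$-path (it is the composite of $\varphi$ with the coarse self-isomorphism of $c_1([0,1])$ given by $(ht,h)\mapsto (h-ht,h)$, and one checks that it restricts correctly to the two cone rays), so $\alpha' \sim \alpha$. For transitivity, given $\varphi : \alpha \rightsquigarrow \alpha'$ and $\varphi' : \alpha' \rightsquigarrow \alpha''$, the composite $\varphi *_1 \varphi' : \alpha \rightsquigarrow \alpha''$ is a coarse $1$-path witnessing $\alpha \sim \alpha''$. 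Its coarseness is exactly the content of the final sentence after Definition \ref{pathconcat}, which is an application of Proposition \ref{crs_excisive_for_R} applied to the controlled map $c_1([0,1]) \to \R_{\geq 0}$ sending $(ht,h)\mapsto ht$ and the threshold function $h\mapsto h/2$.

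For the second assertion, I would appeal directly to Corollary \ref{homiffpath}: for any two coarse rays $\alpha,\alpha' : \R_{\geq 0} \to X$, there exists a coarse $1$-path $\varphi : \alpha \rightsquigarrow \alpha'$ if and only if $\alpha$ and $\alpha'$ are coarsely homotopic. Therefore the equivalence relation defined above is exactly the equivalence relation of coarse homotopy on the set of coarse rays $\R_{\geq 0} \to X$, and the identity map on coarse rays descends to a bijection between the set of equivalence classes and $\pi_0^{\text{Crs}}(X)$ as in Definition \ref{pi01}.

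None of the steps presents a real obstacle: the equivalence relation axioms reduce to the well-definedness (coarseness) of the three constructions in Definition \ref{pathconcat}, and the identification with $\pi_0^{\text{Crs}}(X)$ is immediate from Corollary \ref{homiffpath}. The only mild subtlety worth double-checking is that the inverse and constant paths satisfy the endray conditions of Definition \ref{pathdfn} on $c_1(\{0\})$ and $c_1(\{1\})$, which is a direct computation from the defining formulas.
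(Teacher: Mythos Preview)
Your proposal is correct and follows essentially the same approach as the paper: reflexivity, symmetry, and transitivity are obtained from the constant, inverse, and composite coarse $1$-paths of Definition \ref{pathconcat}, and the identification with $\pi_0^{\text{Crs}}(X)$ is Corollary \ref{homiffpath}. The paper's proof is simply a terser statement of exactly this argument.
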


\begin{proof}
	The constant path gives reflexivity, the inverse path symmetry, and the composite path transitivity. The second part is true by Proposition \ref{homiffpath}.
\end{proof}

We will denote a coarse path component class by $[\alpha]_0^{\text{Crs}}$ for some representative coarse ray $\alpha$.

\begin{proposition}
	Let $f:X \to Y$ be a coarse map of coarse spaces. Then there is a well-defined induced function $f_*:\pi_0^{\text{Crs}}(X) \to \pi_0^{\text{Crs}}(Y)$ defined by $[\alpha]_0^{\text{Crs}} \mapsto [f\circ \alpha]_0^{\text{Crs}}$. 
	
	Further, the following data defines a functor $\pi_0^{\text{Crs}}(-) : \textbf{Crs} \to \textbf{Set}$,
	\begin{enumerate}
		\item $\pi_0^{\text{Crs}}(-): X \mapsto \pi_0^{\text{Crs}}(X)$, for each object $X\in \textbf{Crs}$, and
		\item $\pi_0^{\text{Crs}}(-): (f: X \to Y) \mapsto \big(f_*: \pi_0^{\text{Crs}}(X) \to  \pi_0^{\text{Crs}}(Y)\big)$, for each morphism $f$ in \textbf{Crs}.
	\end{enumerate}
\end{proposition}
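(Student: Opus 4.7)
The plan is to verify well-definedness of the assignment $f_* : [\alpha]_0^{\text{Crs}} \mapsto [f\circ \alpha]_0^{\text{Crs}}$ first, and then deduce functoriality from strict associativity of function composition.

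For well-definedness, I first observe that $f\circ \alpha$ is itself a coarse ray: it is the composite of two coarse maps (a coarse ray by assumption, and $f$ itself), and the class \textbf{Crs} is closed under composition, so $f\circ \alpha$ represents a legitimate element of $\pi_0^{\text{Crs}}(Y)$. Next, suppose $[\alpha]_0^{\text{Crs}} = [\alpha']_0^{\text{Crs}}$, witnessed by a coarse $1$-path $\varphi : \alpha \rightsquigarrow \alpha'$, i.e., a coarse map $\varphi : c_1([0,1]) \to X$ satisfying $\varphi|_{c_1(\{0\})} = \alpha \circ \pi_h$ and $\varphi|_{c_1(\{1\})} = \alpha' \circ \pi_h$. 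Then I would claim that $f\circ \varphi : c_1([0,1]) \to Y$ is a coarse $1$-path from $f\circ \alpha$ to $f\circ \alpha'$. Coarseness is again immediate from closure of \textbf{Crs} under composition, and the endray conditions follow by direct computation:
\begin{equation*}
(f\circ \varphi)|_{c_1(\{0\})} = f\circ (\varphi|_{c_1(\{0\})}) = f\circ \alpha \circ \pi_h = (f\circ \alpha)\circ \pi_h,
\end{equation*}
and similarly on $c_1(\{1\})$. This establishes $[f\circ \alpha]_0^{\text{Crs}} = [f\circ \alpha']_0^{\text{Crs}}$, so $f_*$ is well-defined.

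For functoriality, given coarse maps $f: X \to Y$ and $g: Y \to Z$ and any class $[\alpha]_0^{\text{Crs}} \in \pi_0^{\text{Crs}}(X)$, strict associativity of composition yields
\begin{equation*}
(g\circ f)_*\big([\alpha]_0^{\text{Crs}}\big) = [(g\circ f)\circ \alpha]_0^{\text{Crs}} = [g\circ (f\circ \alpha)]_0^{\text{Crs}} = g_*\big([f\circ \alpha]_0^{\text{Crs}}\big) = g_*\circ f_*\big([\alpha]_0^{\text{Crs}}\big),
\end{equation*}
and the identity law $(1_X)_*([\alpha]_0^{\text{Crs}}) = [1_X \circ \alpha]_0^{\text{Crs}} = [\alpha]_0^{\text{Crs}}$ is immediate.

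There is no real obstacle here; the content of the proposition is that coarse maps behave well with respect to both the objects (coarse rays) and the equivalence relation (existence of coarse $1$-paths) used to build $\pi_0^{\text{Crs}}$, and in each case this reduces to the closure of \textbf{Crs} under composition together with the fact that post-composition with $f$ preserves restriction to $c_1(\{0\})$ and $c_1(\{1\})$. The only subtlety worth flagging is that one must verify $f\circ\alpha$ remains coarse (not merely controlled), which uses properness of $f$ together with properness of $\alpha$.
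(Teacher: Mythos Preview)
Your proof is correct and follows exactly the same approach as the paper's: well-definedness via post-composing a coarse $1$-path $\varphi:\alpha\rightsquigarrow\alpha'$ with $f$, and functoriality from associativity of composition. The paper's version is simply terser, omitting the verifications you spell out.
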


\begin{proof}
	To see $f_*$ is well-defined it is enough to notice that if $\alpha$ and $\alpha'$ represent the same class, then there is a coarse $1$-path $\varphi: \alpha \myrightsquigarrow \alpha'$, and so there is a coarse $1$-path $f\varphi : f\alpha \myrightsquigarrow f\alpha'$.
	
	It is then quick to show the existence of the functor $\pi_0^{\text{Crs}}(-)$.
\end{proof}

\begin{lemma}\label{every_crs_ray_close_to_plip_ray}
	Let $X$ be a proper geodesic space. Then every coarse ray $\alpha:\R_{\geq 0} \to X$ is close to some proper Lipschitz ray, and as such, each element of $\pi_0^{\text{Crs}}(X)$ may be represented by a proper Lipschitz ray.
\end{lemma}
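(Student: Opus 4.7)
The plan is to replace $\alpha$ by the geodesic interpolation of its values at integer times, essentially applying the construction of Definition \ref{continuousray}. Concretely, I would construct a ray $\beta : \R_{\geq 0} \to X$ which agrees with $\alpha$ on $\Z_{\geq 0}$ and is a constant-speed geodesic on each unit interval $[n, n+1]$, and then verify that $\beta$ is proper Lipschitz and close to $\alpha$.

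First I would observe that $\R_{\geq 0}$ is quasi-geodesic and $\Z_{\geq 0} \subseteq \R_{\geq 0}$ is a $1$-discrete subset. Since $\alpha$ is coarse, Corollary \ref{discrete_crs_is_lip} tells us that $\alpha|_{\Z_{\geq 0}}$ is Lipschitz; let $A$ be a choice of Lipschitz constant, so that in particular $d_X(\alpha(n), \alpha(n+1)) \leq A$ for every $n \in \Z_{\geq 0}$. Next, choose for each $n$ a geodesic segment $u_{\alpha(n),\alpha(n+1)}$ in $X$ (this uses the geodesic hypothesis), and set $\beta := \alpha^*$ as in Definition \ref{continuousray}. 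By construction $\beta$ is continuous, agrees with $\alpha$ on $\Z_{\geq 0}$, and on each $[n,n+1]$ is a geodesic reparametrized to constant speed $d_X(\alpha(n),\alpha(n+1)) \leq A$, so $\beta$ is $A$-Lipschitz on each unit interval. A standard triangle-inequality argument (cutting an arbitrary interval $[s,t]$ at the integers it contains) then promotes this to $\beta$ being globally $A$-Lipschitz.

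For closeness, I would use Proposition \ref{aL_implies_crd} to pick constants $(A',B')$ so that $\alpha$ is $(A',B')$-asymptotically Lipschitz. For $t \in [n,n+1]$, both $d_X(\alpha(t), \alpha(n)) \leq A' + B'$ and $d_X(\beta(t), \alpha(n)) = d_X(\beta(t), \beta(n)) \leq A$ are bounded independently of $n$, so $d_X(\alpha(t), \beta(t))$ is uniformly bounded and $\alpha$ is close to $\beta$. Metric properness of $\beta$ is then automatic from closeness: if $B \subseteq X$ is bounded then $\beta^{-1}(B)$ is contained in $\alpha^{-1}$ of a bounded enlargement of $B$, which is bounded since $\alpha$, being coarse, is proper.

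The second assertion follows immediately: by Example \ref{close_is_crs_hom}, close coarse maps are coarse homotopic, so $[\alpha]_0^{\text{Crs}} = [\beta]_0^{\text{Crs}}$ in $\pi_0^{\text{Crs}}(X)$. The only non-routine aspect of the whole argument is arranging a uniform Lipschitz constant across all intervals, and this is precisely what the Lipschitz-on-a-discrete-subset corollary delivers; the noncanonical choice of geodesics is harmless since we only need existence of one such $\beta$.
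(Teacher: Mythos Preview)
Your proposal is correct and follows essentially the same approach as the paper: both use the geodesic interpolation $\alpha^*$ of Definition \ref{continuousray}, invoke the Lipschitz-on-a-discrete-subset result to control consecutive integer values, and patch together the per-interval Lipschitz bounds via the triangle inequality. The only notable difference is that you deduce properness of $\beta$ directly from closeness to $\alpha$ (which is slightly cleaner), whereas the paper argues properness first by a separate contradiction argument and establishes closeness afterwards.
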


\begin{proof}
	By Proposition \ref{aL_implies_crd} we can assume $\alpha : \R_{\geq 0} \to X$ is asymptotically Lipschitz. Consider the map $\alpha^*: \R_{\geq 0} \to X$ defined in Definition \ref{continuousray}. The claim is that $\alpha^*$ is Lipschitz and is close to $\alpha$.
	
	First, we see that $\alpha^*|_{\Z_{\geq 0}}=\alpha|_{\Z_{\geq 0}}$ is Lipschitz by Proposition \ref{discretealislip}. Further, for each $h \in Z_{\geq 0}$ the map $\alpha^*|_{[h,h+1]}$ is equal to the composite
	$$[h,h+1] \xrightarrow{-h} [0,1] \xrightarrow{\times a_{\alpha(h),\alpha(h+1)}} [0,a_{\alpha(h),\alpha(h+1)}] \xrightarrow{u_{\alpha(h),\alpha(h+1)}} X.$$
	Each component above is Lipschitz, and so $\alpha^*|_{[h,h+1]}$ is Lipschitz (for each $h$, $\alpha^*|_{[h,h+1]}$ is $(A+B)$-Lipschitz).
	
	To see $\alpha^*$ is Lipschitz, let $h,h' \in \R_{\geq 0}$. Without loss of generality we can assume $h' > h$ (the $h=h'$ case is trivial). Then
	\begin{align*}
		d_X(\alpha^*(h),\alpha^*(h')) & = d_X(\alpha^*|_{[\ceil{h},\ceil{h}+1]}(h),\alpha^*|_{[\ceil{h},\ceil{h}+1]}(\ceil{h})) \\
		& \hspace*{2.5cm} + d_X(\alpha^*|_{\Z}(\ceil{h}),\alpha^*|_{\Z}(\floor{h'}))\\
		& \hspace*{3.5cm} + d_X(\alpha^*|_{[\floor{h'},\floor{h'}+1]}(\floor{h'}),\alpha^*|_{[\floor{h'},\floor{h'}+1]}(\floor{h'}))\\
		& \leq A|h-\lceil h \rceil| + A'|\lceil h \rceil - \lfloor h' \rfloor| + A''|\lfloor h' \rfloor - h'|\\
		& \leq \text{max}\{A,A',A''\}\left(|h-\lceil h \rceil| + |\lceil h \rceil - \lfloor h' \rfloor| + |\lfloor h' \rfloor - h'|\right)\\
		& = \text{max}\{A,A',A''\}\left(|h-h'| \right),
	\end{align*}
	for some $A,A',A'' \geq 0$, and $\alpha^*$ is $(\text{max}\{A,A',A''\})$-Lipschitz. 
	
	To see $\alpha^*$ is proper let $B\in \B_X$. Then $B \subseteq CB(\alpha(0),R)$ for some $R>0$. Suppose for the sake of contradiction that $\alpha^*$ is not proper. Then there exists a strictly increasing unbounded sequence $(h_i)_{i \in \Z_{\geq 0}}$ of real numbers such that $\{\alpha^*(h_i)\ |\ i \in \Z_{\geq }\} \subseteq CB(\alpha(0),R)$. But, by the construction of $\alpha^*$ by geodesics, and the fact that $\alpha^*$ is $A$-Lipschitz for some $A\geq 0$, we have
	\begin{equation*}
		\{\alpha(\ceil{h_i})\ |\ i\in \Z_{\geq 0}\} = \{\alpha^*(\ceil{h_i})\ |\ i\in \Z_{\geq 0}\} \subseteq CB(\alpha(0),R+A) \in \B_X,
	\end{equation*}
	and hence we reach a contradiction since $\alpha$ is proper.
	
	Finally, we will show $\alpha^*$ is close to $\alpha$. Let $h \in \R_{\geq 0}\backslash \Z_{\geq 0}$ (the $h \in \Z_{\geq 0}$ is trivial). Then
	\begin{align*}
		d_X(\alpha^*(h),\alpha(h)) & \leq d_X\left(\alpha^*|_{[\floor{h},\ceil{h}]}(h),\alpha^*|_{[\floor{h},\ceil{h}]}(\floor{h})\right) + d_X(\alpha(\floor{h}),\alpha(h))\\
		& \leq A|h-\floor{h}| + A' |h-\floor{h}|\\
		& \leq A+A',
	\end{align*}
	since $|h-\floor{h}|<1$, for some $A,A' >0$. Hence $\alpha^*$ and $\alpha$ are close and every class in $\Pi_0^{\text{Crs}}(X)$ can be represented by a proper Lipschitz ray.
\end{proof}

\begin{corollary}\label{any_root}
	Let $X$ be a proper geodesic space and fix some $x_0 \in X$. Then each coarse ray $\alpha$ in $X$ is close to some proper Lipschitz ray $\alpha_{x_0}^{*}:\R_{\geq 0} \to X$ rooted at $x_0$, that is, such that $\alpha_{x_0}^{*}(0) = x_0$. As such, each element of $\pi_0^{\text{Crs}}(X)$ may be represented by an $x_0$-rooted proper Lipschitz ray.
\end{corollary}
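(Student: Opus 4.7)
The plan is to apply Lemma~\ref{every_crs_ray_close_to_plip_ray}, which already produces a proper Lipschitz ray $\alpha^*$ close to $\alpha$, and then modify $\alpha^*$ at its start so that it becomes rooted at $x_0$ without destroying Lipschitz-ness, properness, or closeness to $\alpha$. The natural move is to prepend a geodesic from $x_0$ to $\alpha^*(0)$ and then shift $\alpha^*$ along its $\R_{\geq 0}$ parameter.

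Concretely, set $x_1 := \alpha^*(0)$ and $D := d_X(x_0, x_1)$, and choose a geodesic segment $\gamma : [0,D] \to X$ from $x_0$ to $x_1$ (available since $X$ is geodesic). Define
\[
\alpha^*_{x_0}(h) \;:=\; \begin{cases} \gamma(h) & 0 \leq h \leq D, \\ \alpha^*(h - D) & h \geq D. \end{cases}
\]
This is well-defined and continuous because $\gamma(D) = x_1 = \alpha^*(0)$, and it is rooted at $x_0$ by construction. Each piece is Lipschitz ($\gamma$ is $1$-Lipschitz and $\alpha^*$ is Lipschitz by the preceding lemma) and the two pieces agree at the gluing point, so $\alpha^*_{x_0}$ is globally Lipschitz. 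It is proper because its image is the union of the compact set $\gamma([0,D])$ with the image of the proper map $\alpha^*$, so preimages of bounded sets remain bounded.

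It remains to verify closeness to $\alpha$. For $h \geq D$ I would use the triangle inequality
\[
d_X\bigl(\alpha^*_{x_0}(h), \alpha(h)\bigr) \;\leq\; d_X\bigl(\alpha^*(h-D), \alpha(h-D)\bigr) + d_X\bigl(\alpha(h-D), \alpha(h)\bigr),
\]
where the first summand is bounded by the closeness constant between $\alpha^*$ and $\alpha$ from Lemma~\ref{every_crs_ray_close_to_plip_ray}, and the second by the controlled-ness of $\alpha$ applied to the fixed displacement $D$. For $h \in [0,D]$ I would bound $d_X(\alpha^*_{x_0}(h), \alpha(h)) \leq d_X(\gamma(h), x_0) + d_X(x_0, \alpha(0)) + d_X(\alpha(0), \alpha(h))$, whose three summands are respectively at most $D$, a fixed constant, and a uniform bound over $[0,D]$ obtained from controlled-ness of $\alpha$. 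There is no serious obstacle here beyond bookkeeping; the only point to watch is that $D$ is a fixed shift, so controlled-ness of $\alpha$ supplies a single uniform bound on $d_X(\alpha(h-D), \alpha(h))$ rather than a sequence of $h$-dependent bounds.
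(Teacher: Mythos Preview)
Your proof is correct and follows essentially the same idea as the paper's: invoke Lemma~\ref{every_crs_ray_close_to_plip_ray} to get a proper Lipschitz $\alpha^*$, then use a geodesic in $X$ to re-root it at $x_0$. The only difference is in the construction: the paper replaces $\alpha^*$ on the interval $[0,1]$ by a reparametrised geodesic from $x_0$ to $\alpha^*(1)=\alpha(1)$ and leaves $\alpha^*$ unchanged on $[1,\infty)$, whereas you prepend a geodesic of length $D=d_X(x_0,\alpha^*(0))$ and shift the whole ray by $D$. The paper's version makes closeness to $\alpha$ immediate on $[1,\infty)$ (since $\alpha^*_{x_0}=\alpha^*$ there), while your version needs the extra step of invoking controlledness of $\alpha$ to absorb the fixed shift by $D$; you handle this correctly, so the difference is cosmetic rather than substantive.
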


\begin{proof}
	By Lemma \ref{every_crs_ray_close_to_plip_ray} above $\alpha$ is close to some proper Lipschitz ray $\alpha^*$. We then may define $\alpha_{x_0}^{*}$ by
	\begin{align*}
		\alpha_{x_0}^{*}(h) \coloneqq \begin{cases}
			u_{x_0,\alpha(1)}(a_{x_0,\alpha(1)}h) & 0 \leq h \leq 1,\\
			\alpha^*(h) & 1 \leq h.
		\end{cases}
	\end{align*}
	In particular if $\alpha^*$ is proper $A$-Lipschitz, $\alpha_{x_0}^{*}$ is proper $\text{max}\{A,a_{x_0,\alpha(1)}\}$-Lipschitz. It is quick to show $\alpha_{x_0}^{*}$ and $\alpha^{*}$ are close.
\end{proof}

%%%%%%%%%%%%%%%%%%%%%%%%%%%%%%%%%%%%%%%%%%%%%%%%%%%%%%%%%%%%%%%%%%%%%%%%%%%%%%%%%%%%%%%%%%%%%%%%%%%%%%%%%%%%%%%%%%%%%%%%%%%%%%%%%%%%%%%%%%%%%%%%%%%%%%%%%%%%%%%%%%%%%%%%%%%%%%%%%%%%%%%%%%%%%%%%%%%%%%%%%%%%%%%%%%%%%%%%%%%%%%%%%%%%%%%%%%%%%%%%%%%%%%%%%%%%%%%%%%%%%%%%%%%%%%%%%%%%%%%%%%%%%%%%%%%%%%%%%%%%%%%%%%%%%%%%%%%%%%%%%%%%%%%%%%%%%%%%%%%%%%%%%%%%%%%%%%%%%%%%%%%%%%%%%%%%%%%%%%%%%%%%%%%%%%%%%%%%%%%%%%%%%%%%%%%%%%%%%%%%%%%%%%%%%%%%%%%%%%%%%%%%%%%%%%%%%%%%%%%%%%%%%%%%%%%%%%%%%%%%%

\section{Coarse Homotopy and Ends of Proper Geodesic Spaces}\label{section_7}

We can now begin to consider the main topic of this paper, that is the interactions between ends and coarse homotopy. While proper geodesic spaces make up the main class of spaces of interest, first we will consider ends in the context of metric cones. Recall the definition of the ray $\alpha_x$ defined on the cone of a point (Definition \ref{cone_ray}).

\begin{proposition}\label{top_pi0_is_ends_of_cone}
	Let $X$ be a geometric finite simplicial complex. Then, there is a bijection $\mathcal{E}\text{nds}(c(X))\cong \pi_0(X)$.
\end{proposition}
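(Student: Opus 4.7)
The plan is to exhibit an explicit bijection $\phi : \pi_0(X) \to \ends(c(X))$ via $\phi([x]) := \text{end}(\alpha_x)$, where $\alpha_x$ is the canonical ray of Definition \ref{cone_ray}. Since $X$ is compact (being a geometric finite simplicial complex), $c(X)$ is proper by Proposition \ref{c(X)_is_proper}, and each $\alpha_x$ is an isometric, hence metrically proper continuous embedding $\R_{\geq 0} \hookrightarrow c(X)$, so the target makes sense.

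First I would check $\phi$ is well-defined. Given a continuous path $\gamma : [0,1] \to X$ from $x$ to $x'$ and a bounded set $B \subseteq CB(0, R) \subseteq c(X)$, the horizontal arc $\sigma_t(s) := (t\gamma(s), t)$ for any $t > R$ has Euclidean norm $t\sqrt{|\gamma(s)|^2 + 1} \geq t > R$, so lies outside $B$ and connects $\alpha_x(t) = (tx, t)$ to $\alpha_{x'}(t) = (tx', t)$; hence $\text{end}(\alpha_x) = \text{end}(\alpha_{x'})$.

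Next I would analyse the path components of the complements $c(X) \setminus CB(0, R)$. Decompose $X = X_1 \sqcup \cdots \sqcup X_k$ into its (finitely many) path components, which are also its connected components and are open in $X$, since any finite simplicial complex is locally path-connected. The crucial property of the geometric cone is that $c(X_i) \cap c(X_j) = \{0\}$ for $i \neq j$ (comparing last coordinates forces equal heights, hence equal $x$-coordinates, impossible if $X_i \cap X_j = \emptyset$), so for every $R > 0$ we obtain
\begin{equation*}
c(X) \setminus CB(0, R) \;=\; \bigsqcup_{i=1}^k \bigl(c(X_i) \setminus CB(0, R)\bigr)
\end{equation*}
as a disjoint union of clopen subsets. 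Each piece is path-connected via a "go up, traverse, come down" construction: for two points $(h_j x_j, h_j)$ with $x_j \in X_i$ and any path $\tau$ in $X_i$ from $x_1$ to $x_2$, concatenate the ascent along $\alpha_{x_1}$ from $h_1$ to some $H > \max\{h_1, h_2, R\}$, the horizontal arc $s \mapsto (H\tau(s), H)$ (which has norm at least $H > R$), and the descent along $\alpha_{x_2}$ from $H$ to $h_2$; the vertical legs have norm bounded below by $h_j\sqrt{|x_j|^2+1} > R$, so the concatenation stays outside $CB(0,R)$.

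Consequently, for each $R > 0$ the path components of $c(X) \setminus CB(0, R)$ correspond naturally and coherently (in $R$) to $\pi_0(X)$. Given any proper continuous ray $r$, properness together with connectedness of $[T, \infty)$ place $r([T, \infty))$ in exactly one piece $c(X_i) \setminus CB(0, R)$ for $T$ large, and the label $i$ is independent of $R$; this yields an inverse $\psi : \ends(c(X)) \to \pi_0(X)$, well-defined on equivalence classes because $\ends$ is defined purely through path components of complements of bounded sets. The composition $\psi\phi$ is the identity since $\alpha_x$ lies in $c(X_i)$ precisely when $x \in X_i$, and $\phi\psi$ is the identity since path-connectedness of $c(X_i) \setminus CB(0, R)$ supplies a path from $r(t)$ to $\alpha_x(t)$ outside $CB(0, R)$ whenever $r$ stabilises into $X_i$. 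The main subtle step will be verifying the path-connectedness claim together with the clopenness of each $c(X_i) \setminus CB(0,R)$ in $c(X) \setminus CB(0,R)$, both of which rest essentially on $X$ being locally path-connected with finitely many components.
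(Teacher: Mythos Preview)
Your proof is correct and uses the same candidate bijection $[x]\mapsto\text{end}(\alpha_x)$ as the paper, with an identical well-definedness argument via horizontal arcs at large height. The difference is in how you establish that this is a bijection: the paper checks injectivity and surjectivity separately (projecting a connecting path in $c_R(X)$ down to $X$ for injectivity, and for surjectivity picking an arbitrary value $\alpha(S')=(hx,h)$ of a given ray and building an explicit path to $\alpha_x$ by travelling along the ray and then radially, followed by a project-and-reinclude trick). You instead compute the path components of every $c(X)\setminus CB(0,R)$ once and for all via the clopen decomposition $\bigsqcup_i (c(X_i)\setminus CB(0,R))$, then read off an inverse $\psi$ directly. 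Your route is a little more structural and arguably cleaner, since it makes the coherence in $R$ transparent and avoids the paper's somewhat ad hoc path construction for surjectivity; the paper's route is more hands-on but requires no appeal to local path-connectedness of $X$ or to the components being clopen.
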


\begin{proof}
	Define a map $F : \pi_0(X) \to \ends(c(X))$ by $[x]_0 \mapsto \text{end}(\alpha_x)$. We claim that this is a bijection. To see $F$ is well-defined let $[x]_0=[x']_0$ for $x,x'\in X$, $u:[0,1] \to X$ be some continuous path from $x$ to $x'$, and $B$ be some bounded set in $c(X)$. Note, there exists some $R>0$ such that $B\subseteq c(X)\backslash c_R(X)$. Consider $c(X)\backslash(c(X)\backslash c_R(X)) = c_R(X)$. Then, for any $h\geq R$, the composition continuous path $\text{inc}_h \circ u$ defines a continuous path from $\alpha_x(h)$ to $\alpha_{x'}(h)$, and so we have that $\text{end}(\alpha_x(h)) = \text{end}(\alpha_{x'}(h))$ and $F$ is well-defined.
	
	Next, let $x,x'\in X$ such that $\text{end}(\alpha_x)=\text{end}(\alpha_{x'}(h))$. Then for each $R>0$, there is a some $h\geq R$ and a path $u:[0,1]\to c_R(X)$ from $\alpha_x(h)=(hx,h)$ to $\alpha_{x'}(h)=(hx',h)$, and so the composite $\text{proj}_R \circ u$ defines a path from $x$ to $x'$, so $[x]_0=[x']_0$ and $F$ is injective.
	
	Finally, let $\alpha :\R_{\geq 0} \to c(X)$ be a topologically proper (and hence metrically proper by Proposition \ref{metric_proper_is_top_proper} and Proposition \ref{c(X)_is_proper}) continuous ray. By properness, there exits $S>0$ such that $\text{im}\big(\alpha|_{[S,\infty)}\big) \cap \boldsymbol{0} = \emptyset$. Choose some $x$ such that $\alpha(S')=(hx,h)$ for some $S'>S$, and some $h \geq 0$. We claim $\text{end}(\alpha)=\text{end}(\alpha_x)$. Again, let $B$ be some bounded set contained in $c(X)\backslash c_R(X)$ for some $R>0$ and consider $c(X)\backslash(c(X)\backslash c_R(X))=c_R(X)$. Using the properness of $\alpha$ there exists $S''>0$ such that $\text{im}\big(\alpha|_{[S,\infty)}\big) \cap c(X)\backslash c_R(X) = \emptyset$.  Let $T>\text{max}(S'',R)$. We can define a continuous path $u:[0,1]\to c_h(X)$ from $\alpha_x(T)$ to $\alpha(T)$ in $c_h(X)$ defined by 
	\begin{equation*}
		u(t) \coloneqq \begin{cases}
			\big(((1-2t)T+2th)x,(1-2t)T+2th\big) & 0 \leq t \leq \frac{1}{2},\\
			\alpha\big((2-2t)s+(2t-1)T\big) & \frac{1}{2} \leq t \leq 1,
		\end{cases}
	\end{equation*}
	for all $t$. Then the composite $\text{inc}_T\circ \text{proj}_R\circ u$ defines a continuous path from $\alpha_x(T)$ to $\alpha(T)$ in $c_R(X)$, so $\text{end}(\alpha)=\text{end}(\alpha_x)$, $F$ is surjective, and hence a bijection.
\end{proof}

\begin{corollary}
	Let $X$ be a geometric finite simplicial complex. Then there exists a bijection $\mathcal{E}\text{nds}(c(X))\cong \pi^{\text{Crs}}_0(c(X))$.
\end{corollary}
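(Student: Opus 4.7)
The plan is to obtain the bijection by chaining together the results already proved in this section together with the coarse-homotopy invariance of $\pi_0^{\text{Crs}}(-)$. Explicitly, I want to assemble the composite
\begin{equation*}
	\ends(c(X)) \;\cong\; \pi_0(X) \;\cong\; \pi_0^{\text{Crs}}(c_1(X)) \;\cong\; \pi_0^{\text{Crs}}(c(X)),
\end{equation*}
where the first bijection is Proposition \ref{top_pi0_is_ends_of_cone}, the second is Theorem \ref{crs_hom_grps_of_cone}, and the third is induced by the coarse equivalence $c_1(X) \hookrightarrow c(X)$ of Definition \ref{cone_definition}.

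The only step in this chain that is not immediately quoted from earlier in the excerpt is the third. I would first record that $\pi_0^{\text{Crs}}(-)$ is a coarse-homotopy invariant, i.e.\ that coarse homotopic maps induce equal maps on $\pi_0^{\text{Crs}}$. This is a short observation: if $f,g : Y \to Z$ are coarse homotopic and $\alpha : \R_{\geq 0} \to Y$ is a coarse ray, then by Proposition \ref{comp_crs_hom} the composites $f\circ\alpha$ and $g\circ\alpha$ are coarse homotopic rays in $Z$, hence by Corollary \ref{homiffpath} they are connected by a coarse $1$-path, so $[f\circ\alpha]_0^{\text{Crs}} = [g\circ\alpha]_0^{\text{Crs}}$. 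A standard functorial argument (using $f'f \overset{\text{Crs}}{\sim} 1$ and $ff' \overset{\text{Crs}}{\sim} 1$) then upgrades any coarse homotopy equivalence to a bijection on $\pi_0^{\text{Crs}}$.

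By Example \ref{close_is_crs_hom} the coarse equivalence $c_1(X) \hookrightarrow c(X)$ is in particular a coarse homotopy equivalence, so the preceding observation gives the desired bijection $\pi_0^{\text{Crs}}(c_1(X)) \cong \pi_0^{\text{Crs}}(c(X))$. Chaining this with the bijections from Proposition \ref{top_pi0_is_ends_of_cone} and Theorem \ref{crs_hom_grps_of_cone} then yields the claim.

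I do not anticipate a real obstacle here, since every ingredient is already in place; the main thing to be careful about is cleanly stating the coarse-homotopy invariance of $\pi_0^{\text{Crs}}$, which the earlier material strongly implies but does not formally isolate as a stand-alone statement. If one wanted a more conceptual description, one could also note that the composite bijection sends a coarse path component of $c(X)$ represented by the ray $\alpha_x$ (for $x \in X$, as in Definition \ref{cone_ray}) to the end $\text{end}(\alpha_x)$, which by the explicit constructions in the proof of Proposition \ref{top_pi0_is_ends_of_cone} and in Theorem \ref{crs_hom_grps_of_cone} is a natural choice of representative for the corresponding class.
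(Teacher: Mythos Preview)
Your proposal is correct and follows essentially the same approach as the paper: the paper's proof simply composes the bijections from Proposition \ref{top_pi0_is_ends_of_cone} and Theorem \ref{crs_hom_grps_of_cone}. The only difference is that you make explicit the passage from $\pi_0^{\text{Crs}}(c_1(X))$ to $\pi_0^{\text{Crs}}(c(X))$ via the coarse equivalence $c_1(X)\hookrightarrow c(X)$, which the paper leaves implicit (writing $\pi_0^{\text{Crs}}(c(X))$ directly), so your version is if anything a bit more careful.
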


\begin{proof}
	The composition of $\mathcal{E}\text{nds}(c(X))\cong \pi_0(X) \cong \pi^{\text{Crs}}_0(c(X))$, with bijections given in Proposition \ref{top_pi0_is_ends_of_cone} and Theorem \ref{crs_hom_grps_of_cone} gives the desired result.
\end{proof}

This is a nice example that shows that there are at lease some cases where the notions of coarse path components and ends align. This then of course extends to any space coarse homotopy equivalent to a metric cone of a geometric finite simplicial complex. We would like to, however, consider spaces beyond this, in particular proper geodesic metric spaces.

\begin{proposition}\label{geospace_pi0=end}
	Let $X$ be a geodesic metric space and $\alpha,\alpha':\R_{\geq 0} \to X$ be metrically proper and topologically proper Lipschitz maps such that $[\alpha]_0^{\text{Crs}}=[\alpha']_0^{\text{Crs}}$. Then we have $\text{end}(\alpha)=\text{end}(\alpha')$.
\end{proposition}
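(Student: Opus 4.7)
The plan is to use a coarse $1$-path witnessing $[\alpha]_0^{\text{Crs}} = [\alpha']_0^{\text{Crs}}$ to build, for each sufficiently large integer $h$, a continuous path in $X$ from $\alpha(h)$ to $\alpha'(h)$ that avoids any prescribed compact set $K$. This will combine with the tails of $\alpha$ and $\alpha'$---which, by topological properness, eventually miss $K$---to produce the path-connected subset of $X \setminus K$ required by the definition of $\text{end}(\alpha) = \text{end}(\alpha')$.

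Fix a coarse $1$-path $\varphi : c_1([0,1]) \to X$ so that $\varphi(0,h) = \alpha(h)$ and $\varphi(h,h) = \alpha'(h)$ for all $h \geq 1$. Since $\varphi$ is controlled, there is some $S > 0$ such that any two points of $c_1([0,1])$ at distance at most $1$ have $\varphi$-images at distance at most $S$. For each integer $h \geq 1$, the plan is to sample the $h$-layer at the $h+1$ points $(i,h)$ for $i = 0, 1, \ldots, h$ and join consecutive $\varphi$-images by geodesic segments in $X$; the resulting continuous concatenation $\gamma_h$ runs from $\alpha(h)$ to $\alpha'(h)$, and since each geodesic segment has length at most $S$, every point of $\gamma_h$ lies within $S$ of $\varphi(h\text{-layer})$.

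To force $\gamma_h$ into $X \setminus K$, I would set $B := \{x \in X : d_X(x, K) \leq 2S\}$, which is bounded because $K$ is. Metric properness of $\varphi$ then forces $\varphi^{-1}(B)$ to be bounded in $c_1([0,1])$, and because the $h$-coordinate is bounded on any bounded subset of $c_1([0,1])$ (as $d_{c_1([0,1])}((ht,h),(0,1)) \geq |h-1|$), there exists $H_1$ such that the entire $h$-layer misses $\varphi^{-1}(B)$ whenever $h > H_1$; equivalently, $d_X(\varphi(h\text{-layer}), K) > 2S$. The triangle inequality then yields $d_X(\gamma_h, K) > S > 0$, so $\gamma_h \subseteq X \setminus K$. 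Combined with $H_2$ supplied by topological properness of $\alpha, \alpha'$ so that their tails from $H_2$ onward avoid $K$, any integer $h > \max(H_1, H_2)$ produces the connecting set $\alpha([h,\infty)) \cup \gamma_h \cup \alpha'([h,\infty)) \subseteq X \setminus K$ witnessing $\text{end}(\alpha) = \text{end}(\alpha')$.

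The main subtlety I expect is the careful choice of buffer distance $2S$, ensuring that $\gamma_h$---which is only $S$-close to, but not contained in, the image of $\varphi$---still lies outside $K$. The remaining steps are essentially routine given the setup: the continuous interpolation uses only the geodesic hypothesis, the control and metric properness of $\varphi$ supply the large-scale avoidance, and the topological properness of the rays themselves finishes the argument on their tails.
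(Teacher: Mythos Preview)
Your argument is correct and follows essentially the same route as the paper: sample the $h$-layer of the coarse $1$-path at the integer lattice points, connect consecutive images by geodesic segments, and use metric properness of $\varphi$ to push the resulting continuous path outside any given compact set for large $h$. The only cosmetic differences are that the paper invokes Proposition~\ref{aL_implies_crd} to make $\varphi$ asymptotically Lipschitz (using the bound $A+B$ where you use your control constant $S$) and argues by contradiction rather than via your direct buffer $B = \{x : d_X(x,K)\leq 2S\}$; neither changes the substance.
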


\begin{proof}
	Without loss of generality we can assume $\alpha(0) = \alpha'(0)=x_0$ for some $x_0\in X$. Let $K\subseteq X$ be a compact subset. Then $K\subseteq CB(x_0,R)\subseteq X$ for some $R>0$. By Proposition \ref{homiffpath} and Proposition \ref{aL_implies_crd} there exists a proper and $(A,B)$-asymptotically Lipschitz coarse $1$-path $\varphi:\alpha \myrightsquigarrow \alpha'$.
	
	For each $h\in \R_{\geq 0}$ we can define a topological path $u_h:[0,1]\to X$ from $\alpha(h)$ to $\alpha'(h)$ as follows.
	\begin{align*}
		u_h(t)\coloneqq \begin{cases}
			u_{\varphi(0,h),\varphi(1,h)}(a_{\varphi(0,h),\varphi(1,h)}ht) & 0 \leq ht \leq 1,\\
			\hspace*{2cm}  \vdots & \hspace*{1cm} \vdots \\
			u_{\varphi(j,h),\varphi(j+1,h)}(a_{\varphi(j,h),\varphi(j+1,h)}(ht-j)) & j \leq ht \leq j+1,\\
			\hspace*{2cm} \vdots &\hspace*{1cm} \vdots \\
			u_{\varphi(h-1,h),\varphi(h,h)}(a_{\varphi(h-1,h),\varphi(h,h)}(ht-h+1)) & h-1 \leq ht \leq h,\\
		\end{cases}
	\end{align*}
	where each $u_{\varphi(j,h),\varphi(j+1,h)}$ is a choice of geodesic segment. We claim there exists some $h$ such that for all $h'\geq h$ we have $\text{im}(u_{h'})\cap (X\backslash CB(x_0,R))=\emptyset$, and thus $u_{h'}$ is a topological path connecting $\text{im}(\alpha)\cap (X\backslash CB(x_0,R))$ and $\text{im}(\alpha')\cap (X\backslash CB(x_0,R))$.
	
	For the sake of contradiction, suppose there exists no such $h$. Then, for each $h\in \R_{\geq 0}$ there exists $x_h \in CB(x_0,R)\subseteq X$ such that $x_h=u_h(t)$ for some $t\in [0,1]$. Since $\varphi$ is proper and  $(A,B)$-asymptotically Lipschitz, for each $h\in\R_{\geq 0}$ and $j\in \{0,1,...,h-1\}$,
	\begin{equation*}
		d_X(\varphi(j,h),\varphi(j+1,h)) \leq A\cdot d_{c_1([0,1])}((j,h),(j+1,h))+B = A+B.
	\end{equation*}
	As such, for each $x_h$ there exists some $x'_h\in CB(x_h,A+B)$ such that $x_h'=\varphi(j_h,h)$ for some $j_h\in\{0,1,...,h\}$. Then $\{(j_h,h)\ |\ h\in \R_{\geq 0}\}$ is unbounded in $c_1([0,1])$ but 
	\begin{equation*}
		\varphi\big[\{(j_h,h)\ |\ h \in\ \R_{\geq 0}\}\big]\subseteq CB(x_0,R+A+B) \in \B_{X},
	\end{equation*}
	and thus $\varphi$ is not proper and we reach a contradiction.
\end{proof}

We may now state one of our main results.

\begin{theorem}\label{ends_natural_surjection}
	There exists a natural surjection $\pi^{\text{Crs}}_0(-) \twoheadrightarrow \mathcal{E}\text{nds}(-) : \textbf{PGeo$^{\text{Crs}}$} \to \textbf{Set}$.
\end{theorem}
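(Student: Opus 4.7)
The plan is to construct, for each proper geodesic space $X$, a component map $\eta_X : \pi_0^{\text{Crs}}(X) \to \ends(X)$, and then verify surjectivity and naturality separately. For well-definedness, given a class $[\alpha]_0^{\text{Crs}} \in \pi_0^{\text{Crs}}(X)$, apply Lemma \ref{every_crs_ray_close_to_plip_ray} to choose a proper Lipschitz ray $\tilde\alpha$ representing the class, and set $\eta_X([\alpha]_0^{\text{Crs}}) := \text{end}(\tilde\alpha)$. Since Lipschitz rays are continuous and metric properness coincides with topological properness on proper spaces by Proposition \ref{metric_proper_is_top_proper}, $\text{end}(\tilde\alpha)$ is an element of $\ends(X)$. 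The fact that this does not depend on the choice of proper Lipschitz representative is precisely Proposition \ref{geospace_pi0=end}, which asserts that two proper Lipschitz rays in the same coarse path component class define the same end.

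Surjectivity is a direct consequence of Lemma \ref{bridsonendslemma}(2): every end of a proper geodesic space is represented by a geodesic ray, and such a ray is automatically $1$-Lipschitz and metrically proper, hence a coarse ray. Thus it defines a class in $\pi_0^{\text{Crs}}(X)$ whose image under $\eta_X$ is the given end.

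For naturality, let $f : X \to Y$ be a coarse map in $\textbf{PGeo}^{\text{Crs}}$ and let $\alpha$ be a proper Lipschitz representative of a class in $\pi_0^{\text{Crs}}(X)$. On one side of the square, $f_{\mathcal{E}}(\eta_X([\alpha]_0^{\text{Crs}})) = f_{\mathcal{E}}(\text{end}(\alpha)) = \text{end}((f \circ \alpha)^*)$ by the definition of $f_{\mathcal{E}}$ in Theorem \ref{ends_functor}. On the other side, $\eta_Y(f_\ast([\alpha]_0^{\text{Crs}})) = \eta_Y([f \circ \alpha]_0^{\text{Crs}})$; to evaluate this, observe that $(f \circ \alpha)^*$ is itself a proper Lipschitz ray (again by the construction used in Lemma \ref{every_crs_ray_close_to_plip_ray}) and is close to $f \circ \alpha$, hence coarse homotopic to it by Example \ref{close_is_crs_hom}, so it represents the same class. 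Thus $\eta_Y([f \circ \alpha]_0^{\text{Crs}}) = \text{end}((f \circ \alpha)^*)$, and the two sides agree.

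The main obstacle was already cleared by Proposition \ref{geospace_pi0=end}: the nontrivial content is showing that a coarse-homotopy-level equivalence forces a topological-level equality of ends, and this is exactly what lets $\eta_X$ descend from coarse rays to coarse homotopy classes. The remaining work is a matching exercise: once we consistently use the $(-)^*$-construction to promote coarse rays to proper Lipschitz continuous rays on both sides of the square, the components of $\eta$ line up by definition.
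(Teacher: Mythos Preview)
Your proposal is correct and follows essentially the same route as the paper: define $\eta_X$ by sending a coarse ray to the end of a proper Lipschitz representative (the paper uses the specific $\alpha^*$ of Definition~\ref{continuousray}, you allow any such representative, which is harmless once well-definedness is in place), invoke Proposition~\ref{geospace_pi0=end} for well-definedness, Lemma~\ref{bridsonendslemma}(2) for surjectivity, and verify naturality by reducing both legs of the square to $\text{end}((f\circ\alpha)^*)$. The only cosmetic difference is that the paper checks naturality starting from an arbitrary coarse ray and passes through $\alpha^*$, whereas you start from a proper Lipschitz representative directly; both arguments unwind to the same identity.
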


\begin{proof}
	For each proper geodesic space $X$, define a function $\eta_X:\pi_0^{\text{Crs}}(X) \to \ends(X)$ where $\eta_X([\alpha]_0^{\text{Crs}})\coloneq \text{end}(\alpha^*)$ where $\alpha^*$ is the proper Lipschitz ray defined in Definition \ref{continuousray}. To see this is well-defined let $[\alpha]_0^{\text{Crs}}=[\alpha']_0^{\text{Crs}}$ for some coarse rays $\alpha$ and $\alpha'$ in $X$. Then 
	\begin{equation*}
		[\alpha^*]_0^{\text{Crs}} = [\alpha]_0^{\text{Crs}} = [\alpha']_0^{\text{Crs}} = [(\alpha')^*]_0^{\text{Crs}}
	\end{equation*}
	by Lemma \ref{every_crs_ray_close_to_plip_ray} and so $\text{end}(\alpha^*) = \text{end}((\alpha')^*)$ by Proposition \ref{geospace_pi0=end}. Each map $\eta_X$ is surjective since by Lemma \ref{bridsonendslemma} $(2)$ every end in $X$ has a geodesic (and hence coarse) representative.
	
	We only need $\eta_X$ to be natural in $X$. To see this, let $Y$ be another proper geodesic space and $f:X \to Y$ a coarse map. Then the diagram
	\begin{center}
		\begin{tikzcd}
			\pi_0^{\text{Crs}}(X) \arrow[r,"f_*"] \arrow[d,"\eta_X" swap]& \pi_0^{\text{Crs}}(Y) \arrow[d,"\eta_Y"]\\
			\ends(X) \arrow[r, "f_{\mathcal{E}}" swap] & \ends(Y)
		\end{tikzcd}
	\end{center}
	commutes since by Lemma \ref{every_crs_ray_close_to_plip_ray} and the fact the $f_*$ is well-defined we have
	\begin{equation*}
		[(f\circ \alpha^*)^*]_0^{\text{Crs}} = [f\circ \alpha^*]_0^{\text{Crs}} = [f\circ \alpha]_0^{\text{Crs}} = [(f\circ \alpha)^*]_0^{\text{Crs}},
	\end{equation*}
	and so $\text{end}((f\circ \alpha^*)^*) = \text{end}((f\circ \alpha)^*)$ by Proposition \ref{geospace_pi0=end}, the above diagram commutes, and the result holds.
\end{proof}

The following corollary then shows that the set of ends of a proper geodesic space is a coarse homotopy invariant.

\begin{corollary}
	The functor $\ends(-):\textbf{PGeo}^{\text{Crs}}\to \textbf{Set}$ maps coarse homotopy equivalent morphisms to equal maps, and thus coarse homotopy equivalences to bijections.
\end{corollary}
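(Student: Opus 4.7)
The plan is to leverage the naturality and surjectivity of the natural transformation $\eta : \pi_0^{\text{Crs}}(-) \twoheadrightarrow \ends(-)$ just established, together with the corresponding (easier) statement at the level of coarse path components. So the first thing I would verify is the analogous claim for $\pi_0^{\text{Crs}}(-)$: whenever $f,g : X \to Y$ are coarse homotopic coarse maps in $\textbf{PGeo}^{\text{Crs}}$, the induced maps $f_* , g_* : \pi_0^{\text{Crs}}(X) \to \pi_0^{\text{Crs}}(Y)$ coincide. This is essentially immediate from Proposition \ref{comp_crs_hom}: for any coarse ray $\alpha : \R_{\geq 0} \to X$, precomposing a coarse homotopy $H_q$ from $f$ to $g$ with $\alpha$ yields a coarse homotopy from $f \circ \alpha$ to $g \circ \alpha$, so $[f\circ \alpha]_0^{\text{Crs}} = [g\circ \alpha]_0^{\text{Crs}}$.

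With this in hand, I would chase the naturality square of Theorem \ref{ends_natural_surjection}: for any $[\alpha]_0^{\text{Crs}} \in \pi_0^{\text{Crs}}(X)$,
\begin{equation*}
f_{\mathcal{E}}\big(\eta_X([\alpha]_0^{\text{Crs}})\big) = \eta_Y\big(f_*([\alpha]_0^{\text{Crs}})\big) = \eta_Y\big(g_*([\alpha]_0^{\text{Crs}})\big) = g_{\mathcal{E}}\big(\eta_X([\alpha]_0^{\text{Crs}})\big).
\end{equation*}
Since $\eta_X$ is surjective, this forces $f_{\mathcal{E}} = g_{\mathcal{E}}$ as functions $\ends(X) \to \ends(Y)$, yielding the first clause.

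For the second clause, if $f : X \to Y$ is a coarse homotopy equivalence with coarse homotopy inverse $f' : Y \to X$, then $f'\circ f \overset{\text{Crs}}{\sim} 1_X$ and $f \circ f' \overset{\text{Crs}}{\sim} 1_Y$. Applying the clause just proved together with the functoriality of $\ends(-)$ from Theorem \ref{ends_functor}, we get
\begin{equation*}
f'_{\mathcal{E}} \circ f_{\mathcal{E}} = (f'\circ f)_{\mathcal{E}} = (1_X)_{\mathcal{E}} = 1_{\ends(X)},
\end{equation*}
and symmetrically $f_{\mathcal{E}} \circ f'_{\mathcal{E}} = 1_{\ends(Y)}$, so $f_{\mathcal{E}}$ is a bijection with inverse $f'_{\mathcal{E}}$.

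There is no real obstacle here beyond the two observations above; the analytic content has already been absorbed into Theorem \ref{ends_natural_surjection} and Proposition \ref{comp_crs_hom}, and this corollary is essentially a formal diagram chase exploiting surjectivity of $\eta_X$.
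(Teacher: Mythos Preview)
Your proof is correct and follows essentially the same approach as the paper's. The paper works concretely---choosing a geodesic representative $\alpha$ for an arbitrary end (this is exactly the surjectivity of $\eta_X$), observing $[f\circ\alpha]_0^{\text{Crs}} = [g\circ\alpha]_0^{\text{Crs}}$ via Proposition \ref{comp_crs_hom}, and then invoking Proposition \ref{geospace_pi0=end} to pass to ends---whereas you package the same moves as a diagram chase through the naturality square of $\eta$ together with surjectivity of $\eta_X$; these are the same argument at two levels of abstraction.
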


\begin{proof}
	Let $X$ and $Y$ be proper geodesic spaces and $f,g:X \to Y$ be coarse maps such that $f$ is coarse homotopic to $g$ and consider $\text{end}(\alpha) \in \ends(X)$ for some ray $\alpha$ in $X$. By Lemma \ref{bridsonendslemma} we can assume $\alpha$ is geodesic. Then, $f\circ \alpha$ and $g\circ \alpha$ are coarse rays in $Y$, and
	\begin{equation*}
		[(f\circ \alpha)^*]^{\text{Crs}}_0 = [f\circ \alpha]^{\text{Crs}}_0 = [g\circ \alpha]^{\text{Crs}}_0 = [(g\circ \alpha)^*]^{\text{Crs}}_0,
	\end{equation*}
	and so
	\begin{equation*}
		f_{\mathcal{E}}(\text{end}(\alpha)) = \text{end}((f\circ \alpha)^*) = \text{end}((g\circ \alpha)^*) = g_{\mathcal{E}}(\text{end}(\alpha)),
	\end{equation*}
	by above, and we are done.
\end{proof}

We also get the following further corollary of Theorem \ref{ends_natural_surjection}.

\begin{corollary}
	Let $\Gamma$ be a finitely-generated geometric group (through its Cayley Graph), such that $\Gamma$ can be expressed as an amalgamated free product $A*_cB$ or HNN extension $A*_C$ for finite $C$, $|A\backslash C| \geq 3$ and $|B \backslash C|\geq 2$. Then $|\pi_0^\text{Crs}(\Gamma)|$ is uncountably infinite. 
\end{corollary}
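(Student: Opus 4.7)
The plan is to combine Theorem \ref{ends_natural_surjection} with Theorem \ref{ends_of_groups} (4). First I would observe that the Cayley graph of $\Gamma$, equipped with the canonical path metric, is a locally finite geometric graph and therefore a proper geodesic metric space, so it is an object of $\textbf{PGeo}^{\text{Crs}}$ and the natural surjection $\eta_\Gamma : \pi_0^{\text{Crs}}(\Gamma) \twoheadrightarrow \ends(\Gamma)$ from Theorem \ref{ends_natural_surjection} applies.

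Next, the hypothesis that $\Gamma$ splits as an amalgamated free product $A *_C B$ or HNN extension $A *_C$ with $C$ finite and the index bounds $|A \backslash C| \geq 3$, $|B \backslash C| \geq 2$ is precisely the condition appearing in Theorem \ref{ends_of_groups} (4), which gives that $|\ends(\Gamma)|$ is uncountably infinite. Since $\eta_\Gamma$ is a surjection onto an uncountably infinite set, its domain $\pi_0^{\text{Crs}}(\Gamma)$ must also be uncountably infinite, completing the proof.

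There is essentially no obstacle here; the whole content of the corollary is bundled into the two previously established results, so the only thing that needs checking is that the Cayley graph genuinely lies in $\textbf{PGeo}^{\text{Crs}}$ (which the earlier examples already note) and that the end count for $\Gamma$ equals the end count for its Cayley graph up to coarse equivalence, which follows from Corollary \ref{ends_quasi_inv} together with the standard quasi-isometry between $|\Gamma|_S$ and $\text{Cay}(\Gamma,S)$ recalled in the paper.
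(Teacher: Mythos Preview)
Your proposal is correct and follows exactly the same approach as the paper: the paper's proof simply reads ``Follows from Theorem \ref{ends_natural_surjection} and Theorem \ref{ends_of_groups} (4).'' Your added remarks about the Cayley graph lying in $\textbf{PGeo}^{\text{Crs}}$ are fine but not strictly needed, since the paper treats this as already established.
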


\begin{proof}
	Follows from Theorem \ref{ends_natural_surjection} and Theorem \ref{ends_of_groups} $(4)$.
\end{proof}

A natural question to ask here is whether the above map is an injection. The following shows this is not in general the case, even if we restrict as far as planar locally finite graphs.

\begin{theorem}\label{ends_neq_pi0}
	There exists a proper geodesic metric space $X$ with $|\ends(X)|< |\pi_0^{\text{Crs}}(X)|$.
\end{theorem}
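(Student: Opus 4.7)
The plan is to exhibit an explicit planar locally finite graph $X$ with $|\ends(X)| = 1$ but $|\pi_0^{\text{Crs}}(X)| \geq 2$. I take $X$ to be a ``ladder'' whose two parallel rays are joined by a single rung at each of a rapidly spaced sequence of positions. Concretely, let $X$ have vertex set $\{(n, i) : n \in \Z_{\geq 0},\ i \in \{0, 1\}\}$, horizontal edges $(n, i)$--$(n+1, i)$ along each row, and a single rung $(p_k, 0)$--$(p_k, 1)$ for each $k \geq 0$, where the positions $(p_k)$ grow sufficiently rapidly (for example $p_k = 2^{2^k}$). Equipped with the canonical path metric, $X$ is a planar, proper, geodesic metric space.

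The ends count $|\ends(X)| = 1$ is direct: any bounded $B \subseteq X$ sits inside some ball $CB((0,0), R)$, and choosing $k$ with $p_k > R$ makes the rung at $p_k$ lie outside $B$ and bridge the two tails of $X$ in $X \setminus B$. Since any proper continuous ray must eventually live in one of these two tails, every proper ray represents the same end.

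The main work is to show that the two natural proper Lipschitz rays $\alpha_0(t) = (t, 0)$ and $\alpha_1(t) = (t, 1)$ are not coarse homotopic. Suppose for contradiction that $\varphi : c_1([0, 1]) \to X$ is a coarse $1$-path from $\alpha_0$ to $\alpha_1$. Since $c_1([0,1])$ is quasi-geodesic, Proposition \ref{aL_implies_crd} supplies constants $A, B$ with $\varphi$ being $(A, B)$-asymptotically Lipschitz; in particular, the image of each layer $L_h = \{(ht, h) : t \in [0, 1]\}$ is a path from $(h, 0)$ to $(h, 1)$ of length at most $A h + B$. Simultaneously, properness of $\varphi$ forces, for each index $K$, a finite $H_K$ such that for $h > H_K$ the image $\varphi(L_h)$ meets no rung $p_j$ with $j \leq K$, and hence must cross some rung $p_j$ with $j \geq K + 1$, forcing its length to be at least $2(p_{K+1} - h) + 1$ whenever $h \leq p_{K+1}$. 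Combining both bounds at $h = H_K + 1$ yields $2 p_{K+1} \leq (A + 2) H_K + O(1)$.

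The principal obstacle is now to bound $H_K$ in terms of $p_K$ and the coarse constants of $\varphi$. I would argue that if $\varphi(u_0, h_0)$ lies on rung $p_K$, then the two asymptotic Lipschitz inequalities
\[
d_X(\varphi(u_0, h_0), (h_0, 0)) \leq A u_0 + B, \qquad d_X(\varphi(u_0, h_0), (h_0, 1)) \leq A(h_0 - u_0) + B
\]
combine with the explicit graph distance formulas to constrain $h_0$ polynomially in $p_K$ (cleanly so when $A < 2$; for larger $A$ one may bootstrap using the vertical controllability of Proposition \ref{dimatatime}, which forces rung-usage to vary slowly across layers). With the super-exponential choice $p_k = 2^{2^k}$, which gives $p_{K+1} = p_K^2$, the resulting inequality becomes inconsistent for $K$ sufficiently large, contradicting the existence of $\varphi$ and thereby separating $[\alpha_0]_0^{\text{Crs}}$ from $[\alpha_1]_0^{\text{Crs}}$ while leaving them ends-wise identified.
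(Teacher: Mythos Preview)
Your construction and the paper's are genuinely different: the paper joins its two rays by a rung at \emph{every} integer height $n$, but makes that rung $n^2$ edges long; you join the rays by unit rungs at a very sparse set of positions $p_k = 2^{2^k}$. Both spaces are one-ended for the same trivial reason, so the interest is entirely in the $\pi_0^{\text{Crs}}$ side.

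The paper's argument is combinatorial/topological: it defines a reduced ``crossing word'' recording how each discrete layer $u_h$ threads the steps, proves this word is \emph{identical} for consecutive layers (hence for all large $h$), and then reads off that some fixed step is visited by every layer, contradicting properness. No quantitative control on the Lipschitz constant is needed.

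Your argument, by contrast, is quantitative: you want $p_{K+1} \le (A+2)H_K + O(1)$ together with a bound $H_K \lesssim p_K$, so that $p_{K+1} = p_K^2$ forces a contradiction. The gap is exactly where you flag it as the ``principal obstacle'': your bound $H_K \lesssim p_K$ follows from the displayed Lipschitz inequalities only when $A < 2$. Indeed, adding those two inequalities gives $d_X(\varphi(u_0,h_0),(h_0,0)) + d_X(\varphi(u_0,h_0),(h_0,1)) \le Ah_0 + 2B$, while the left side is at least $2|h_0 - p_K| - O(1)$; for $A \ge 2$ this places no upper bound on $h_0$ at all, and a hypothetical $\varphi$ with $A=2$ could send $(h_0/2,\,h_0)$ near $(p_K,0)$ for arbitrarily large $h_0$ without violating the Lipschitz condition. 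Properness does force $H_K < \infty$, but gives no relation between $H_K$ and $p_K$, so the inequality $p_K^2 \le (A+2)H_K + O(1)$ is not contradicted by anything you have written. The vague appeal to ``vertical controllability forcing rung-usage to vary slowly'' does not close this: two vertically adjacent points $\varphi(j,h)$ and $\varphi(j,h+1)$ can lie on opposite rows even when they are $A$-close (just cross the rung), so nearby layers need not exhibit the same crossing pattern at the same index $j$.

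Your space almost certainly does satisfy $|\pi_0^{\text{Crs}}(X)| \ge 2$, but proving it seems to require precisely the kind of layer-invariant crossing count that the paper develops --- i.e.\ showing that a suitable signed crossing number of each rung is constant from layer to layer and hence some fixed rung is hit by every layer. That is a different mechanism from the length-versus-growth comparison you sketch, and without it the argument as written is incomplete.
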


\begin{proof}
	Let $X$ be the proper geodesic space defined by a pair of geodesic rays constructed by consecutive edges, connected by a vertex at on end, and $n^2$ edges connecting the $n^{\text{th}}$ vertex on each ray from the joined end, equipped with the usual path metric, depicted below.
	
	\begin{center}
		\begin{tikzpicture}
			\draw[thick,->] (0,0) -- (-3.5,7);
			\draw[thick,->] (0,0) -- (3.5,7);
			\draw[thick] (-0.5,1) -- (0.5,1);
			\draw[thick] (-1,2) -- (1,2);
			\draw[thick] (-1.5,3) -- (1.5,3);
			\draw[thick] (-2,4) -- (2,4);
			\draw[thick] (-2.5,5) -- (2.5,5);
			\draw[thick] (-3,6) -- (3,6);
			
			\node at (0,2.2) {$4$ edges};
			\node at (0,3.2) {$9$ edges};
			\node at (0,4.2) {$16$ edges};
			\node at (0,5.2) {$25$ edges};
			\node at (0,6.2) {$36$ edges};
			
			\node at (-0.7,1) {$1$};
			\node at (-1.2,2) {$2$};
			\node at (-1.7,3) {$3$};
			\node at (-2.2,4) {$4$};
			\node at (-2.7,5) {$5$};
			\node at (-3.2,6) {$6$};
			
			\node at (-3.8,7) {$\alpha\phantom{'}$};
			\node at (3.8,7) {$\alpha'$};
			
			\node at (-3.67,7.3) {$\ddots$};
			\node at (3.67,7.3) {$\iddots$};
			\node at (0, 7) {$\vdots$};
		\end{tikzpicture}
	\end{center}
	Call the collection of edges and vertices that create the horizontal step at height $n$, step$(n)$. Consider the maps $\alpha,\alpha':\R_{\geq 0} \to X$ where $\alpha$ isometrically embeds $\R_{\geq 0}$ onto the left ray in the graph rooted at the join-point, and similarly for $\alpha'$ and the right ray. It is clear that $|\ends(X)|=1$, and so $\text{end}(\alpha) = \text{end}(\alpha')$, however we claim that $[\alpha]^{\text{Crs}}_0 \neq [\alpha]^{\text{Crs}}_0$ and so that $|\pi_0^{\text{Crs}}(X)|>1$. For the sake of contradiction, suppose not. Then by Proposition \ref{discretealislip} there exists a proper $A$-Lipschitz map $\varphi: c_1([0,1])\cap \Z^2 \to X$ with $\varphi(0,h) = \alpha(h)$ and $\varphi(h,h) = \alpha'(h)$.
	
	For each $h \in \Z_{\geq 0}$ we can define a map $u_h:\{0,1,...,h\} \to X$ by the following composite.
	\begin{center}
		\begin{tikzcd}
			\{0,1,...,h\} \arrow[rr, "u_h"]  \arrow[dr, hook, "\text{inc}_h" swap] & & X \\
			\ & c_1([0,1])\cap \Z^2 \arrow[ur, "\varphi" swap]& \ 
		\end{tikzcd}
	\end{center} 
	Then each $u_h$ is defines an $A$-path in $X$ by the Lipschitz property. The contradiction will rely on the `crossings' of steps by $u_h$ for each $h$, as such we need define what we mean by a crossing. Let $j\in \Z_{\geq 1}$ and define a \textbf{forward crossing} of step$(j)$ to be a subset
	\begin{align*}
		\{u_h(x),u_h(x+1),...,u_h(x')\} \subset \text{im}(u_h)
	\end{align*}
	such that
	\begin{enumerate}
		\item $u_h(x)\in $ OB$(\alpha(j),A) \cap $ step$(j)$, called the \textbf{start} of the crossing
		\item $u_h(x')\in $ OB$(\alpha'(j),A) \cap $ step$(j)$, called the \textbf{end} of the crossing, and 
		\item $u_h\big(\{x+1,...,x'-1\}\big)\in$ step$(j)\backslash\big(\text{OB}(\alpha(j),A) \cup \text{OB}(\alpha'(j),A)\big)$, 
	\end{enumerate}
	and denote it by $f_j$. Define a \textbf{backwards crossing} of step$(j)$ similarly just with the sets in $(1)$ and $(2)$ swapped, denoting it by $b_j$. Create a word cross$(u_h, x,x')$ by order of appearance of all crossings in $u_h(\{x,x+1,...,x'\})$ as $i$ increases in $x+i$. Formally declare thatfor each $j$ we have $(f_j)^{-1}=b_j$, and reduce cross$(u_h,x,x')$ in the usual word-theoretic sense and call the resulting word describing the `behaviour' of $u_h$ by behav$(u_h,x,x')$. Let cross$(u_h)\coloneq \text{cross}(u_h,0,h)$ and $\text{behav}(u_h) \coloneqq \text{behav}(u_h,0,h)$. Notice, for any order-preserving subdivision of $\{1,2,...,h\}$,
	\begin{align*}
		\{0,1,...,i_0=x_1\} & \cup \{x_1,x_1+1,...,x_1+i_1=x_2\}\cup ... \\
		& \hspace*{1cm} \cup \{x_{m-1},x_{m-1}+1,...,x_{m-1}+i_{m-1}=x_{m}\} \\
		& \hspace*{2cm} \cup \{x_m,x_m+1,...,x_m+i_m = h\} = \{0,...,h\}
	\end{align*}
	such that each for each $i$, $u_h(x_i) \in OB(\text{im}(\alpha),A)\cup OB(\text{im}(\alpha'),A)$, we have that
	\begin{equation*}
		\text{cross}(u_h,0,x_1)\text{cross}(u_h,x_1,x_2)...\text{cross}(u_h,x_{m-1},x_m)\text{cross}(u_h,x_m,h) = \text{cross}(u_h),
	\end{equation*}
	and
	\begin{equation*}
		\text{behav}(u_h,0,x_1)\text{behav}(u_h,x_1,x_2)...\text{behav}(u_h,x_{m-1},x_m)\text{behav}(u_h,x_m,h) = \text{behav}(u_h),
	\end{equation*}
	under concatenation.
	
	Now, by the properness of $\varphi$, there exists some $R>0$ such that for all $h>R$, we have
	\begin{align*}
		\text{im}(u_h)\cap \Bigg(\alpha[0,A]\cup\alpha'[0,A]\cup \bigcup_{j\leq A}\text{step}(j)\Bigg) = \emptyset,
	\end{align*}
	i.e., for $h>R$ the image of $u_h$ lies entirely above step$(A)$. Further, for $h>R$ the following facts are easily checked,
	\begin{enumerate}
		\item both cross$(u_h)$ and behav$(u_h)$ must not be the empty word,
		\item both cross$(u_h)$ and behav$(u_h)$ must alternate between forward and backwards crossings,
		\item both cross$(u_h)$ and behav$(u_h)$ must begin and end with a forward crossing, and
		\item neither cross$(u_h)$ nor behav$(u_h)$ can contain instances of $f_h$ or $b_h$.
	\end{enumerate}
	
	Fix some such $h>R$. The claim is that for each $h'\geq h$ we have behav$(u_{h'})=\text{behav}(u_h)$. We will do this by induction. The zero-case is straightforward, i.e., behav$(u_h)=\text{behav}(u_h)$, so consider $h+n$ for some $n\geq 1$, and assume $\text{behav}(h+n)=\text{behav}(h)$. We will show that
	$$\text{behav}(h+n+1)=\text{behav}(h+n) = \text{behav}(h).$$ 
	For this let 
	\begin{equation*}
		x_0 \coloneqq \min\big(x \in \{0,1,...,h+n\}\ |\ x \text{ defines the start of a crossing by either } u_{h+n} \text{ or } u_{h+n+1}\big),
	\end{equation*}
	and say $x_0$ is the start of a crossing over step$(j_0)$, i.e., the first crossing to occur in either $u_{h+n} \text{ or } u_{h+n+1}$, and let
	\begin{equation*}
		x_0' \coloneqq \min\big(x \in \{x_0,x_0+1,...,h+n\}\ |\ u_{h+n}(x) \not\in \text{step}(j_0) \text{ or } u_{h+n}(x) \not\in \text{step}(j_0)\big).
	\end{equation*}
	i.e., the instance in which either $u_{h+n} \text{ or } u_{h+n+1}$ leave step$(j_0)$ after this crossing. This could either be `into' the image of $\alpha$ or the image of $\alpha'$. Suppose first that $x_0'\in \text{CB}(\text{im}(\alpha),A)$. If cross$(u_{h+n},x_0,x_0')= \emptyset$ we must have cross$(u_{h+n+1},x_0,x_0')$ is of the form $f_{j_0}b_{j_0}f_{j_0}...b_{j_0}$ and so 
	$$\text{behav}(u_{h+n+1},x_0,x_0')=\text{behav}(u_{h+n},x_0,x_0')=\emptyset.$$
	
	So instead suppose cross$(u_{h+n},x_0,x_0')\neq \emptyset$. Then it must begin with an instance of $f_{j_0}$. Let 
	\begin{equation*}
		x_0''\coloneqq \max(x\in \{x_0,...,x'_0\}\ |\ u_{h+n}(x)\in \text{OB}(\alpha'(j),A)).
	\end{equation*}
	Then either the $A$-path $u_{h+n}(\{x_0'',...,x_0'\})$ or $u_{h+n}(\{x_0'',...,x_0'-1\})$ defines a backward crossing of step$(j_0)$. And so the first entry in cross$(u_{h+n},x_0,x_0')$ is an instance of $f_{j_0}$ and the final entry is an instance of $b_{j_0}$, and since cross$(u_{h+n})$ must alternate between forward and backwards crossings and all points in-between lie on step$(j_0)$, cross$(u_{h+n},x_0,x'_0)$ is of the form $f_{j_0}b_{j_0}f_{j_0}...b_{j_0}$ and so behav$(u_{h+n},x_0,x_0')$ is empty. A similar argument can be used to show that behav$(u_{h+n+1},x_0,x_0')$ is also empty.
	
	Now suppose instead $x_0'\in \text{CB}(\text{im}(\alpha'),A)$. Then both cross$(u_{h+n},x_0,x_0')$ and cross$(u_{h+n},x_0,x_0')$ are non-empty and begin with instances of $f_{j_0}$. Let
	\begin{equation*}
		x_0''\coloneqq \max(x\in \{x_0,...,x'_0\}\ |\ u_{h+n}(x)\in \text{OB}(\alpha(j),A)). 
	\end{equation*}
	Then either the $A$-path $u_{h+n}(\{x_0'',...,x_0'\})$ or $u_{h+n}(\{x_0'',...,x_0'-1\})$ defines a forward crossing of step$(j_0)$. And so the first entry in cross$(u_{h+n},x_0,x_0')$ is an instance of $f_{j_0}$ and the final entry is an instance of $f_{j_0}$, and since cross$(u_{h+n})$ must alternate between forward and backwards crossings and all points in-between lie on step$(j_0)$, cross$(u_{h+n},x_0,x'_0)$ is of the form $f_{j_0}b_{j_0}f_{j_0}...f_{j_0}$ and so behav$(u_{h+n},x_0,x_0') =f_{j_0}$. A similar argument can be used to show behav$(u_{h+n+1},x_0,x_0')=f_{j_0}$.
	
	Next, let
	\begin{equation*}
		x_1 \coloneqq \min(x \in \{x_0',x_0'+1,...,h+n\}\ |\ x \text{ defines the start of a crossing by either } u_{h+n} \text{ or } u_{h+n+1}),
	\end{equation*}
	say a crossing over step$(j_1)$, and let
	\begin{equation*}
		x_1' \coloneqq \min(x \in \{x_1,x_1+1,...,h+n\}\ |\ u_{h+n}(x) \not\in \text{step(j)} \text{ or } u_{h+n}(x) \not\in \text{step(j)}).
	\end{equation*}
	Then we may use a similar argument to above (flipping the crossings if the initial crossing is a backwards one), to show that either behav$(u_{h+n},x_1,x_1')$ and behav$(u_{h+n +1},x_1,x_1')$ are both empty, 
	\begin{equation*}
		\text{behav}(u_{h+n},x_1,x_1') = \text{behav}(u_{h+n},x_1,x_1') = f_{j_1},
	\end{equation*}
	or
	\begin{equation*}
		\text{behav}(u_{h+n},x_1,x_1') = \text{behav}(u_{h+n},x_1,x_1') = b_{j_1}.
	\end{equation*}
 	Notice, behav$(u_{h+n},x_0',x_1)$ and behav$(u_{h+n+1},x_0,x_1)$ are empty by definition.
 	
 	We then continue to repeat this argument, dividing up $\text{cross}(u_{h+n})$ and $\text{cross}(u_{h+n})$ into pieces distinctly separated by the mutual beginning and ending of crossings. This must terminate by the finiteness of the $A$-paths $u_{h+n}$ and $u_{h+n+1}$, say $m$ iterations. The extra point $u_{h+n+1}$ has to work with is not a problem since cross$(u_{h+n+1})$ cannot contain any instances of $f_{h+n+1}$ and so the image of this point cannot be part of a crossing. In doing this, we gain a subdivision
	\begin{equation*}
		\{0,x_0\}\cup \{x_0,x'_0\} \cup ... \cup \{x_{i-1}',x_i\} \cup \{x_i,x_i'\} \cup ... \cup \{x_m,x_m'\} \cup \{x_m',h+n+1\}
	\end{equation*}
	such that 
	\begin{enumerate}
		\item $\text{behav}\{u_{h+n+1},0,x_0\}=\text{behav}\{u_{h+n},0,x_0\}$,
		\item $\text{behav}\{u_{h+n+1},x_{i-1}',x_i\}=\text{behav}\{u_{h+n},x_{i-1}',x_i\}$,
		\item $\text{behav}\{u_{h+n+1},x_i,x_i'\}=\text{behav}\{u_{h+n},x_i,x_i'\}$, and
		\item $\text{behav}\{u_{h+n+1},x_m',h+n+1\}$ is empty,
	\end{enumerate}
	and hence, $\text{behav}(u_{h+n+1})=\text{behav}(u_{h+n})=\text{behav}(u_{h})$.
	
	Finally, fix some $j$ such that $f_j$ appears in $\text{behav}(u_h)$, and pick some $(x_{h'},h')\in c_1([0,1]^2)$ such that $\varphi(x_{h'},h')$ for each $h'\geq h$. Then $\{(x_{h'},h')\} \not\in \B_{c_1([0,1])^2}$ but $\varphi(\{(x_{h'},h')\}) \subset \text{step}(j) \in \B_{X}$, and so $\varphi$ is not proper, and we reach a contradiction.
\end{proof}

\begin{corollary}
	The natural surjection $\pi_0^{\text{Crs}}(-) \to \ends(-) : \textbf{PGeo}^{\text{Crs}} \twoheadrightarrow \textbf{Set}$ is not a natural injection.
\end{corollary}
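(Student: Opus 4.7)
The plan is to derive this corollary directly from Theorem \ref{ends_neq_pi0}, which exhibits a concrete proper geodesic metric space $X$ (the ``ladder-with-growing-rungs'' graph) for which $|\ends(X)| < |\pi_0^{\text{Crs}}(X)|$. The strategy is purely a cardinality argument on a single component of the natural transformation.

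First I would recall that by Theorem \ref{ends_natural_surjection}, for each proper geodesic space $X$ we have a well-defined component $\eta_X : \pi_0^{\text{Crs}}(X) \to \ends(X)$ of the natural transformation, defined by $[\alpha]_0^{\text{Crs}} \mapsto \text{end}(\alpha^*)$. To show the natural transformation is not a natural injection, it suffices to exhibit a single object $X$ in $\textbf{PGeo}^{\text{Crs}}$ for which $\eta_X$ fails to be injective.

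Next I would apply Theorem \ref{ends_neq_pi0}: there exists a proper geodesic metric space $X$ (in fact a locally finite planar graph) such that $|\ends(X)| < |\pi_0^{\text{Crs}}(X)|$. Indeed, the proof of that theorem shows concretely that $|\ends(X)| = 1$ while $[\alpha]_0^{\text{Crs}} \neq [\alpha']_0^{\text{Crs}}$ for the two isometric ray-embeddings $\alpha,\alpha'$, so $|\pi_0^{\text{Crs}}(X)| \geq 2 > 1 = |\ends(X)|$. By the pigeonhole principle, no function from $\pi_0^{\text{Crs}}(X)$ to $\ends(X)$ can be injective, and in particular $\eta_X$ is not injective.

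There is no real obstacle here; the entire content of the corollary has been absorbed into Theorem \ref{ends_neq_pi0}. The only subtlety worth noting explicitly in writing is that ``natural injection'' is being used in the sense of a natural transformation whose every component is an injection, so failure at a single object is sufficient. The proof itself therefore collapses to a one-line invocation of the preceding theorem together with Theorem \ref{ends_natural_surjection}.
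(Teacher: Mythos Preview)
Your proposal is correct and matches the paper's own proof, which simply reads ``Straightforward from Theorem \ref{ends_neq_pi0}.'' Your write-up is just a slightly more explicit unpacking of that one line, invoking the pigeonhole principle on the single component $\eta_X$.
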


\begin{proof}
	Straightforward from Theorem \ref{ends_neq_pi0}.
\end{proof}

The question then becomes \textit{what subcategories of $\textbf{PGeo}^{\text{Crs}}$ may we restrict to in order to gain a natural injection?} The remainder of this article is devoted to showing that such a suitable subcategory containing all locally finite geometric trees suffices, providing the beginnings of an answer to this question.

%%%%%%%%%%%%%%%%%%%%%%%%%%%%%%%%%%%%%%%%%%%%%%%%%%%%%%%%%%%%%%%%%%%%%%%%%%%%%%%%%%%%%%%%%%%%%%%%%%%%%%%%%%%%%%%%%%%%%%%%%%%%%%%%%%%%%%%%%%%%%%%%%%%%%%%%%%%%%%%%%%%%%%%%%%%%%%%%%%%%%%%%%%%%%%%%%%%%%%%%%%%%%%%%%%%%%%%%%%%%%%%%%%%%%%%%%%%%%%%%%%%%%%%%%%%%%%%%%%%%%%%%%%%%%%%%%%%%%%%%%%%%%%%%%%%%%%%%%%%%%%%%%%%%%%%%%%%%%%%%%%%%%%%%%%%%%%%%%%%%%%%%%%%%%%%%%%%%%%%%%%%%%%%%%%%%%%%%%%%%%%%%%%%%%%%%%%%%%%%%%%%%%%%%%%%%%%%%%%%%%%%%%%%%%%%%%%%%%%%%%%%%%%%%%%%%%%%

\section{Geodesics in Locally Finite Geometric Trees}

In this section, we include a short interlude to covering some results relating to the behaviour of geodesics in locally finite geometric trees. First, we must define what we mean by a locally finite geometric tree.

\begin{definition}
	A \textbf{geometric tree} $T$ is a contractible simplicial $1$-complex equipped with the canonical path metric where each edge is isometric to the unit interval. We call $T$ \textbf{locally finite} if there is a finite number of $1$-simplices attached to each $0$-simplex. A pair of $0$-simplices are said to be \textbf{adjacent} if they are connected by a $1$-simplex.
\end{definition}

We will refer to the $0$-simplices and $1$-simplices in the definition above as vertices and edges, respectively, denoting the set of vertices of $T$ by $\text{Vert}(T)$. 

\begin{lemma}[Lemma 1.26, \cite{roe2003lectures}]\label{concat_of_geos_is_geo}
	Let $T$ be a locally finite geometric tree, then $T$ is uniquely geodesic. Further, if $x_1,x_2,x_3$ are points (not necessarily vertices) in $T$ with $$\text{im}(u_{x_1,x_2})\cap\text{im}(u_{x_2,x_3})=x_2,$$
	then the map $u: [0,a_{x_1,x_2}+a_{x_2,x_3}] \to T$ defined by
	\begin{equation*}
		u(t)\coloneq \begin{cases}
			u_{x_1,x_2}(t) & 0 \leq t \leq a_{x_1,x_2},\\
			u_{x_1,x_2}(t - a_{x_1,x_2}) & a_{x_1,x_2} \leq t \leq a_{x_1,x_2}+a_{x_2,x_3},
		\end{cases}
	\end{equation*}
	is a (and hence the unique) geodesic from $x_1$ to $x_3$.
\end{lemma}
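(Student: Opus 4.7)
The plan is to break the statement into two parts, first establishing that $T$ is uniquely geodesic and then deducing the concatenation property. For the first part, I would exploit the fact that $T$, as a contractible simplicial $1$-complex, is characterised (among reasonable path-metric $1$-complexes) by the property that there is a unique arc between any two points. Existence of a geodesic between two points $x, y \in T$ follows by first handling the case where $x$ and $y$ lie in the same edge (where the geodesic is an affine parametrisation along the edge), and otherwise by using local finiteness and connectedness to descend to a combinatorial path in the underlying abstract tree between the nearest vertices of $x$ and $y$; the combinatorial path exists and is unique because the underlying graph has no cycles. Rescaling this piecewise-isometric embedding by arc length yields the desired geodesic $u_{x,y}$.

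For uniqueness, I would argue by contradiction. Suppose $u$ and $u'$ are two distinct geodesics from $x$ to $y$. Since both are isometric embeddings of $[0,d_T(x,y)]$ with the same endpoints, there must exist some maximal subinterval $[s, s']$ on which they agree at $s$ and $s'$ but disagree strictly in between. The restricted images then form two arcs in $T$ with common endpoints $u(s) = u'(s)$ and $u(s') = u'(s')$, meeting only at those endpoints. Their union is a topologically embedded circle in $T$, which contradicts contractibility (equivalently, simple-connectedness) of $T$.

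For the concatenation property, given the hypothesis $\text{im}(u_{x_1,x_2}) \cap \text{im}(u_{x_2,x_3}) = \{x_2\}$, the piecewise map $u$ is continuous by construction and has length $a_{x_1,x_2} + a_{x_2,x_3}$, giving $d_T(x_1, x_3) \leq a_{x_1,x_2} + a_{x_2,x_3}$ via the triangle inequality. For the reverse inequality, I would consider the unique geodesic $u_{x_1,x_3}$ (whose existence and uniqueness come from the first part) and show that its image must contain $x_2$. If not, then the image of $u_{x_1,x_3}$ together with the image of the concatenation $u$ would form two distinct arcs from $x_1$ to $x_3$, which — arguing as in the uniqueness proof by passing to the maximal interval of disagreement — would yield an embedded circle in $T$, contradicting contractibility. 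Once $x_2 \in \text{im}(u_{x_1,x_3})$ is established, applying uniqueness of geodesics separately to the sub-geodesics (via the Proposition in the Large-Scale Geometry section stating that a subsegment of a geodesic is itself a geodesic) forces $u_{x_1,x_3}$ to coincide with $u$ after reparametrisation, completing the proof.

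The main obstacle I anticipate is technical rather than conceptual: carefully carrying out the ``maximal interval of disagreement'' argument rigorously, especially when $x_1, x_2, x_3$ lie in the interiors of edges rather than at vertices. I would handle this by reducing to the combinatorial setting, using the local structure of $T$ near each point and the observation that any arc in $T$ restricted away from its endpoints factors through a finite subcomplex (by local finiteness and compactness of the interval), where cycle-freeness can be applied directly.
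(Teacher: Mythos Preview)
The paper does not actually supply its own proof of this lemma: it is stated with a citation to Roe's \emph{Lectures on Coarse Geometry} (Lemma~1.26 there) and is used as a black box thereafter. So there is no ``paper's proof'' to compare against in any substantive sense.

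Your proposal is a correct and standard route to the result. The only point I would flag is the ``maximal interval of disagreement'' step in the uniqueness argument: as you yourself anticipate, knowing that $u(r)\neq u'(r)$ for $r\in(s,s')$ does not by itself guarantee that the two arcs $u([s,s'])$ and $u'([s,s'])$ have disjoint interiors as subsets of $T$ (one could have $u(r)=u'(r'')$ for $r\neq r''$), so a little more is needed to extract an embedded circle. Your plan to pass to the combinatorial structure handles this cleanly: both arcs, away from their endpoints, are covered by finitely many edges (by compactness and local finiteness), and in the underlying abstract tree two distinct reduced edge-paths with the same endpoints are impossible since an abstract tree has a unique reduced path between any two vertices. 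That reduction is exactly the right move, and once uniqueness is in hand your concatenation argument goes through as written (noting that $u$ is injective since the two constituent geodesics meet only at $x_2$ by hypothesis).
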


Two further facts that will also be useful are that locally finite geometric trees are proper, and the intersection of a pair of geodesics on a locally finite tree also defines a geodesic. The former follows from the tree being locally finite, and the latter from the facts that the intersection of closed subsets is closed.

\begin{lemma}\label{trees_six_lemma}
	Let $T$ be a locally geometric tree. Let $x_1,x_2,x_3,y_1,y_2,y_3$ be points (not necessarily vertices) in $T$, such that
	\begin{enumerate}
		\item $d_T(x_1,y_1),d_T(x_3,y_3)<R$ for some $R>0$,
		\item $x_2$ lies on the geodesic $u_{x_1,x_3}$,
		\item $y_2$ lies on the geodesic $u_{y_1,y_3}$,
		\item if $y_2 \neq y_1$ then $d_T(x_2,x_3) \leq d_T(y_2,y_3)$, and
		\item if $x_2 \neq x_1$ then $d_T(x_2,x_3) \geq d_T(y_2,y_3)$.
	\end{enumerate}
	Then $d_T(x_2,y_2)<2R$.
\end{lemma}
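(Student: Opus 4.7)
My plan is to do a careful case analysis, with the main tool being the unique-geodesic property of locally finite geometric trees (Lemma~\ref{concat_of_geos_is_geo}). The main split is on whether $x_2 = x_1$ or $y_2 = y_1$ (the ``degenerate'' situations) versus both being strictly interior to their respective geodesics.

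For the degenerate case $x_2 = x_1$ (the case $y_2 = y_1$ being symmetric), condition (5) is vacuous, and I would bound $d_T(x_1, y_2)$ using the median $m = \text{med}(x_1, y_1, y_3)$, which lies on each of the three geodesics $u_{x_1, y_1}$, $u_{x_1, y_3}$, $u_{y_1, y_3}$ (by uniqueness of tree geodesics and Lemma~\ref{concat_of_geos_is_geo}). Since $y_2$ lies on $u_{y_1, y_3}$, it falls in either $[y_1, m] \subseteq \text{im}(u_{x_1, y_1})$ or $[m, y_3] \subseteq \text{im}(u_{x_1, y_3})$. The first sub-case gives $d_T(x_1, y_2) \leq d_T(x_1, y_1) < R$ directly, while in the second, $y_2 \in \text{im}(u_{x_1, y_3})$ yields $d_T(x_1, y_2) = d_T(x_1, y_3) - d_T(y_2, y_3)$, and combining with $d_T(x_1, y_3) \leq d_T(x_1, x_3) + d_T(x_3, y_3)$ and condition (4) (applicable when $y_2 \neq y_1$) gives $d_T(x_1, y_2) < R$.

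In the remaining case $x_2 \neq x_1$ and $y_2 \neq y_1$, conditions (4) and (5) together force $d_T(x_2, x_3) = d_T(y_2, y_3) =: d$. Here I would analyse the geodesic $u_{x_2, y_2}$ in $T$, splitting further on whether it passes through $x_3$ and/or $y_3$, or branches off each of $u_{x_1, x_3}$, $u_{y_1, y_3}$ earlier via a ``bridge'' segment. In the typical configurations, the tree structure forces $u_{x_1, y_1}$ to decompose as the concatenation of $u_{x_1, x_2}$, $u_{x_2, y_2}$, and $u_{y_2, y_1}$ (which is a genuine geodesic via Lemma~\ref{concat_of_geos_is_geo}), yielding $d_T(x_1, y_1) = d_T(x_1, x_2) + d_T(x_2, y_2) + d_T(y_1, y_2) \geq d_T(x_2, y_2)$ and hence $d_T(x_2, y_2) < R$. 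In the exceptional configurations where this decomposition fails (for instance, when the bridge between $u_{x_1, x_3}$ and $u_{y_1, y_3}$ meets $u_{y_1, y_3}$ at $y_1$ itself), a parallel decomposition of $u_{x_3, y_3}$ through $x_2$ and $y_2$ gives the bound using $d_T(x_3, y_3) < R$ instead.

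The main obstacle will be the branching case analysis in the second regime: I need to enumerate the distinct configurations of how $u_{x_2, y_2}$ relates to the endpoint-geodesics, and in each sub-case verify that the proposed concatenation is indeed the geodesic (using Lemma~\ref{concat_of_geos_is_geo} and the tree's separation properties). In every configuration, one of the two bounds $d_T(x_1, y_1) < R$ or $d_T(x_3, y_3) < R$ decomposes to bound $d_T(x_2, y_2)$ by $R$, which is in fact stronger than the claimed $< 2R$; the factor of $2$ in the statement is a safe over-estimate that accommodates every configuration uniformly.
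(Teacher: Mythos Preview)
Your overall approach---a case analysis over the possible configurations of the relevant geodesics---is exactly what the paper does; indeed the paper omits the proof entirely, describing it as ``a long but routine check of all possible configurations of the geodesics $u_{x_1,x_3}$, $u_{y_1,y_3}$, and $u_{x_2,y_2}$.'' Your handling of the degenerate cases $x_2=x_1$ and $y_2=y_1$ via the median is clean and correct.

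There is, however, a genuine gap in your enumeration of the non-degenerate case. You assert that either $u_{x_1,y_1}$ or $u_{x_3,y_3}$ decomposes as a concatenation through $x_2$ and $y_2$, but this fails precisely when the two long geodesics $u_{x_1,x_3}$ and $u_{y_1,y_3}$ overlap in a segment and both $x_2$, $y_2$ lie in that overlap. Concretely, take $T=\R$ with $x_1=0$, $x_3=10$, $y_1=0.5$, $y_3=10.5$, $x_2=5$, $y_2=5.5$: then the path $u_{x_1,x_2}\cup u_{x_2,y_2}\cup u_{y_2,y_1}$ runs $0\to 5.5\to 0.5$ and is not the geodesic $[0,0.5]$, and likewise $u_{x_3,y_3}$ does not pass through $x_2$ or $y_2$. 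So neither of your two proposed decompositions applies here, and your dichotomy ``bridge versus through-$x_3$/$y_3$'' does not exhaust the possibilities. This overlap configuration is easy to dispatch once isolated---if $y_2\in\text{im}(u_{x_1,x_3})$ then from $d_T(x_2,x_3)=d_T(y_2,y_3)$ and the triangle inequality one gets $d_T(x_2,y_2)\leq d_T(x_3,y_3)<R$ directly---but it must be added to your list. With that case included, the argument goes through and, as you note, in fact yields the sharper bound $d_T(x_2,y_2)<R$.
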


Proving the above Lemma is just a long but routine check of all possible configurations of the geodesics $u_{x_1,x_3}$, $u_{y_1,y_3}$, and $u_{x_2,y_2}$, and as such we have chosen to omit it.

\begin{proposition}\label{geos_are_sequences_of_verts}
	Let $T$ be a locally finite geometric tree. Then the set of geodesics with vertex endpoints is in one to one correspondence with the set of finite sequences of adjacent vertices where each vertex appears at most once. Further, let $v_1,...,v_n$ be vertices in $T$. Then
	\begin{equation*}
		\text{im}(u_{v_1,v_n}) \subseteq \bigcup_{i \in \textbf{n-1}} \text{im}(u_{v_i,v_{i+1}}).
	\end{equation*}
\end{proposition}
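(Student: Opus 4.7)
The plan is to establish both claims in turn, and then verify the inclusion by induction using the tree's median structure. For the bijection, I would send a geodesic $u_{v,v'}$ with vertex endpoints to the ordered list of vertices it passes through. The key observation is that, since each edge of $T$ is isometric to $[0,1]$ and both endpoints of the geodesic are $0$-simplices, the image of $u_{v,v'}$ must traverse a sequence of full edges: any point the geodesic hits on an edge's interior forces the geodesic to enter and exit through the two vertices of that edge (otherwise one could shortcut, contradicting isometry). No vertex can repeat along this sequence, because a repeated vertex would produce a nontrivial loop in the image and hence a nontrivial cycle in the $1$-complex, violating contractibility of $T$. Conversely, given a finite sequence $v_1, \ldots, v_n$ of pairwise adjacent, distinct vertices, I would iteratively apply Lemma \ref{concat_of_geos_is_geo} to concatenate the edge-geodesics $u_{v_i,v_{i+1}}$ into a single geodesic; the hypothesis $\operatorname{im}(u_{v_i,v_{i+1}}) \cap \operatorname{im}(u_{v_{i+1},v_{i+2}}) = \{v_{i+1}\}$ needed to invoke the lemma follows from the fact that two distinct edges in a simplicial $1$-complex can only share vertices, together with the no-repeats assumption and contractibility of $T$ (which rules out the edges touching at any further vertex).

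For the second statement, I would proceed by induction on $n$, with the $n=2$ case being trivial. The inductive step reduces to the following three-point fact for trees: for any points $a,b,c \in T$,
\begin{equation*}
\operatorname{im}(u_{a,c}) \subseteq \operatorname{im}(u_{a,b}) \cup \operatorname{im}(u_{b,c}).
\end{equation*}
To establish this, I would identify the median $m$ as the unique furthest point from $a$ lying in $\operatorname{im}(u_{a,b}) \cap \operatorname{im}(u_{a,c})$, and use the already-noted fact that intersections of geodesic images in $T$ are again geodesic images, together with unique geodesicity, to show that $\operatorname{im}(u_{a,c}) = \operatorname{im}(u_{a,m}) \cup \operatorname{im}(u_{m,c})$ with the two pieces being subsets of $\operatorname{im}(u_{a,b})$ and $\operatorname{im}(u_{b,c})$ respectively (invoking Lemma \ref{concat_of_geos_is_geo} to recognise these concatenations as geodesics). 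Applying this with $a=v_1$, $b=v_n$, $c=v_{n+1}$ and then invoking the inductive hypothesis on $\operatorname{im}(u_{v_1,v_n})$ closes the induction.

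I expect the main technical obstacle to be the careful verification in the converse direction of the bijection, namely that the concatenation along a repetition-free sequence of adjacent vertices actually satisfies the hypotheses of Lemma \ref{concat_of_geos_is_geo} at every stage. This amounts to showing that, after concatenating the first $k$ edges, the resulting geodesic image meets the $(k{+}1)$-th edge only at their shared vertex; equivalently, that the partial path does not re-enter the $(k{+}1)$-th edge. This is exactly where contractibility of $T$ enters in an essential way, and I would phrase it as: a second point of intersection would produce a closed loop of positive length in the $1$-complex, contradicting the tree being contractible.
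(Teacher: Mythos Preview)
Your argument is correct. For the bijection you and the paper do essentially the same thing, just with the map written in opposite directions: the paper sends a non-repeating sequence $(v_1,\ldots,v_n)$ to $u_{v_1,v_n}$ and checks surjectivity via the intermediate value theorem and injectivity via unique geodesicity plus absence of cycles, while you build the inverse explicitly using Lemma~\ref{concat_of_geos_is_geo}. These are two sides of the same coin.

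For the inclusion, however, your route is genuinely different. The paper argues combinatorially: it first uses the intermediate value theorem to extract from the concatenation $u_{v_1,v_2}\ast\cdots\ast u_{v_{n-1},v_n}$ a sequence of adjacent vertices, then repeatedly deletes backtracks of the form $\ldots,v,v',v,\ldots$ and repetitions $\ldots,v,v,\ldots$ until a non-repeating sequence remains; this residual sequence encodes $u_{v_1,v_n}$ and visibly sits inside the original union. Your approach instead inducts on $n$ via the tripod (median) structure, reducing to the three-point inclusion $\operatorname{im}(u_{a,c})\subseteq\operatorname{im}(u_{a,b})\cup\operatorname{im}(u_{b,c})$. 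This is cleaner and more geometric, and it has the bonus that nothing in your argument uses that the $v_i$ are vertices: your induction already proves the stronger statement for arbitrary points, which the paper establishes separately (and with considerably more effort) as Lemma~\ref{lemma_geo_underlies_concat_of_geos_trees}. The paper's reduction argument, by contrast, leans on the vertex structure to make the ``delete backtracks'' procedure well-defined.
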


\begin{proof}
	For the first claim, define a map by sending every finite sequence of non-repeating adjacent vertices by $(v_1,...,v_n)\mapsto u_{v_1,v_n}$. This is shown to be surjective by an application of the intermediate value theorem, and injective by the uniqueness of geodesics and the fact that there are no cycles since $T$ is a tree.
	
	For the second claim, again use the intermediate value theorem to gain a sequence of adjacent vertices. Then by removing all occurrences of $...,v,v',v,...$ and $...v,v,...$ in any order continuously until we halt (which we will since there is a finite number of points), we will be left with a sequence of non-repeating (since there are no cycles) adjacent vertices, and hence the unique geodesic with underlying image.
\end{proof}

\begin{lemma}\label{lemma_geo_underlies_concat_of_geos_trees}
	Let $x_1,...,x_n$ a finite sequence of points (not necessarily vertices) in some locally finite geometric tree $T$. Then
	\begin{equation*}
		\text{im}(u_{x_1,x_n}) \subseteq \bigcup_{i \in \boldsymbol{n-1}} \text{im}(u_{x_i,x_{i+1}})
	\end{equation*}
\end{lemma}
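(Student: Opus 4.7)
The plan is to induct on the length $n$ of the sequence. The case $n=2$ is immediate since both sides are $\text{im}(u_{x_1,x_2})$. For $n \geq 4$, applying the $n=3$ case to the triple $x_1, x_{n-1}, x_n$ would give
\begin{equation*}
    \text{im}(u_{x_1,x_n}) \subseteq \text{im}(u_{x_1,x_{n-1}}) \cup \text{im}(u_{x_{n-1},x_n}),
\end{equation*}
and the induction hypothesis applied to $x_1, \ldots, x_{n-1}$ covers $\text{im}(u_{x_1,x_{n-1}})$ by $\bigcup_{i=1}^{n-2}\text{im}(u_{x_i,x_{i+1}})$. Combining these gives the claim for $n$.

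The real content is therefore in the base case $n=3$, which I would handle via the tripod (median) structure of trees. Parametrize $u_{x_1,x_2}$ and $u_{x_1,x_3}$ by arc length from $x_1$, and set
\begin{equation*}
    \tau \coloneqq \sup\big\{ t \geq 0 \ \big|\ u_{x_1,x_2}(t) = u_{x_1,x_3}(t) \big\}.
\end{equation*}
If the two geodesics coincide at a parameter $t_0$, then their restrictions to $[0,t_0]$ are two geodesics with identical endpoints, hence equal by unique geodesicity of $T$ (Lemma \ref{concat_of_geos_is_geo}). So the agreement set is downward closed, and by continuity both geodesics agree on all of $[0,\tau]$, sharing a common point $c \coloneqq u_{x_1,x_2}(\tau) = u_{x_1,x_3}(\tau)$. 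This immediately gives $\text{im}(u_{x_1,c}) \subseteq \text{im}(u_{x_1,x_2})$.

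To show $\text{im}(u_{c,x_3}) \subseteq \text{im}(u_{x_2,x_3})$, I would argue that the sub-geodesics $u_{x_2,c} \subseteq u_{x_1,x_2}$ and $u_{c,x_3} \subseteq u_{x_1,x_3}$ meet only at $c$: any further common point $y$ would sit at arc-length parameter $\tau + d_T(c,y) > \tau$ along both $u_{x_1,x_2}$ and $u_{x_1,x_3}$, contradicting maximality of $\tau$. Given this disjointness, Lemma \ref{concat_of_geos_is_geo} realises the concatenation of $u_{x_2,c}$ and $u_{c,x_3}$ as the unique geodesic $u_{x_2,x_3}$, so $c$ lies on $u_{x_2,x_3}$ and $\text{im}(u_{c,x_3}) \subseteq \text{im}(u_{x_2,x_3})$, completing the $n=3$ case.

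The main obstacle is pinning down the existence and position of the median $c$ and verifying the disjointness of the two sub-geodesics at $c$; the rest is routine once unique geodesicity and Lemma \ref{concat_of_geos_is_geo} are invoked. Edge cases where $x_2$ already lies on $u_{x_1,x_3}$ (so $c = x_2$) or vice versa are handled transparently by this same setup.
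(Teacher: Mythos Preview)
Your proposal is correct and takes a genuinely different route from the paper. The paper does not induct on $n$; instead it reduces the general case to the vertex case (Proposition \ref{geos_are_sequences_of_verts}) by replacing each $x_i$ with a carefully chosen nearby vertex $v_i$ (essentially the most recently visited vertex along the concatenated path), applying the vertex version to $v_1,\ldots,v_n$, and then correcting for the two boundary edges $e_1,e_n$ through a fairly lengthy case analysis. Your argument instead isolates the $n=3$ case and handles it via the median/tripod structure: you locate the branch point $c$ where $u_{x_1,x_2}$ and $u_{x_1,x_3}$ diverge, use unique geodesicity to show the agreement set is $[0,\tau]$, and then invoke Lemma \ref{concat_of_geos_is_geo} to realise $u_{x_2,x_3}$ as the concatenation through $c$. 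This is shorter and more conceptual, and in fact works in any uniquely geodesic space where Lemma \ref{concat_of_geos_is_geo} holds (e.g.\ any $\mathbb{R}$-tree), with no reference to the simplicial structure. The paper's approach, by contrast, stays combinatorial throughout and avoids the small continuity/closure argument needed to pin down $c$ at the supremum $\tau$.
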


\begin{proof}
	If all $x_i$ are vertices we are done by above. Further if all $x_i$ lie on a single edge this is trivial, so suppose otherwise. Consider the concatenation of these geodesics
	\begin{equation*}
		u:\left[0, \sum_i a_{x_i,x_{i+1}} \right] \to T
	\end{equation*}
	defined by
	\begin{equation*}
		u(t) \coloneq \begin{cases}
			u_{x_1,x_2}(t) & 0 \leq t \leq a_{x_1,x_2},\\
			\vdots & \vdots\\
			u_{x_j,x_{j+1}}\left(t-\sum_{i=1}^{j-1}a_{x_i,x_{i+1}} \right) & \sum_{i=1}^{j-1}a_{x_i,x_{i+1}} \leq t \leq \sum_{i=1}^{j}a_{x_i,x_{i+1}},\\
			\vdots & \vdots\\
			u_{x_{n-1},x_{n}}\left(t-\sum_{i=1}^{n-2}a_{x_i,x_{i+1}} \right) & \sum_{i=1}^{n-2}a_{x_i,x_{i+1}} \leq t \leq \sum_{i=0}^{n-1} a_{x_i,x_{i+1}}.
		\end{cases}
	\end{equation*}
	
	Let $v$ be the `first vertex hit' by $u$. That is $v \in Vert(T)$ such that $u(t)=v$ for some $t$ such that for all $t'$ with $u(t') \in \text{Vert}(T)$ we have $t\leq t'$. Similarly define $v'$ to be the `last vertex hit' by $u$. We plan to define a finite sequence of adjacent vertices whose image contains the image of $u$. These exist by the isometry of geodesics.
	
	If $x_1,x_n \in \text{Vert}(T)$ set $v_1 \coloneq x_1 = v$ and $v_n \coloneq x_n = v'$, respectively. If not, i.e., $x_1 \not\in \text{Vert}(T)$, set $v_1$ to be the vertex adjacent to $v$ with the edge connecting them containing $x_1$. Define $v_n$ similarly if $x_n \not\in \text{Vert}(T)$. Call these edges $e_1$ and $e_n$ respectively. 
	
	Further, if $x_j$ lies on $e_1$ such for all $i \leq j$ we have $x_i$ also lies on $e_1$ then define $v_j$ to be $v_1$. Similarly, if $x_j$ lies on $e_n$ and for all $i \geq j$ we have $x_i$ also lies on $e_n$, then define $v_j$ to be $v_n$.
	
	Next, for each other $j \in \{0,...,n\}$ not yet considered, let $v_j \in \text{Vert}(T)$ such that $v_j = u(t)$ for some $t \leq \sum_{i=0}^{j-1}a_{x_i,x_{i+1}}$ such that for all $t<t'\leq \sum_{i=0}^{j-1}a_{x_i,x_{i+1}}$ we have $u(t') \not\in \text{Vert}(T)$, i.e., the `most recent vertex hit' at or before $x_j$. An example setup where $x_1 \not\in \text{Vert}(T)$ and $x_4$ is the first point not lying on $e_1$ is depicted below.
	\begin{center}
		\begin{tikzpicture}
			\draw[thick]  (0,0) -- (0,2.8);
			\draw[thick]  (0,2.8) -- (-2,5);
			\draw[thick]  (0,2.8) -- (2,5);
			\draw[thick]  (2,5) -- (0,7.2);
			\draw[thick]  (2,5) -- (4,7.2);
			
			\node at (0.3,1) {$x_1$};
			\node at (0.3,2.3) {$x_2$};
			\node at (0.3,1.7) {$x_3$};
			\node at (0,1) {$\bullet$};
			\node at (0,2.3) {$\bullet$};
			\node at (0,1.7) {$\bullet$};
			
			\node at (3,6.1) {$\bullet$};
			\node at (3.3,6.1) {$x_4$};
			
			\node at (0,0) {$\bullet$};
			\node at (0.9,0) {$v_1,v_2,v_3$};
			
			\node at (0,2.8) {$\bullet$};
			\node at (0.3,2.8) {$v$};
			
			\node at (2,5) {$\bullet$};
			\node at (2.3,5) {$v_4$};
		\end{tikzpicture}
	\end{center}
	By Proposition \ref{geos_are_sequences_of_verts}, we then have that 
	\begin{equation*}
		\text{im}\left(u_{v_1,v_n}\right) \subseteq \bigcup_{i \in \boldsymbol{n-1}} \text{im}(u_{v_i,v_{i+1}}).
	\end{equation*}
	Then if $x_1,x_n\in \text{Vert}(T)$ we have
	\begin{equation*}
		\text{im}\left(u_{x_1,x_n}\right) = \text{im}\left(u_{v_1,v_n}\right) \subseteq \bigcup_{i \in \boldsymbol{n-1}} \text{im}(u_{v_i,v_{i+1}}),
	\end{equation*}
	if $x_1\in \text{Vert}(T)$ but $x_n \not\in \text{Vert}(T)$ then 
	\begin{equation*}
		\text{im}\left(u_{x_1,x_n}\right) = \text{im}\left(u_{v_1,v_n}\right)\backslash \big((im(u)^\text{c}\cap e_n)\big) \subseteq \bigcup_{i \in \boldsymbol{n-1}} \text{im}(u_{v_i,v_{i+1}})\backslash \big((im(u)^\text{c}\cap e_n)\big),
	\end{equation*}
	similarly if $x_n\in \text{Vert}(T)$ but $x_1 \not\in \text{Vert}(T)$ then 
	\begin{equation*}
		\text{im}\left(u_{x_1,x_n}\right) = \text{im}\left(u_{v_1,v_n}\right)\backslash \big((im(u)^\text{c}\cap e_1)\big) \subseteq \bigcup_{i \in \boldsymbol{n-1}} \text{im}(u_{v_i,v_{i+1}})\backslash \big((im(u)^\text{c}\cap e_1)\big),
	\end{equation*}
	and if $x_1,x_n \not\in \text{Vert}(T)$ then 
	\begin{align*}
		\text{im}\left(u_{x_1,x_n}\right) & = \text{im}\left(u_{v_1,v_n}\right)\backslash \big((im(u)^\text{c}\cap e_n) \cup (im(u)^\text{c}\cap e_n) \big) \\
		& \subseteq \bigcup_{i \in \boldsymbol{n-1}} \text{im}(u_{v_i,v_{i+1}})\backslash \big((im(u)^\text{c}\cap e_1) \cup (im(u)^\text{c}\cap e_n) \big).
	\end{align*}
	Then, we claim in each case above we have
	\begin{align*}
		\bigcup_{i \in \boldsymbol{n-1}} \text{im}(u_{v_i,v_{i+1}}) & \subseteq \bigcup_{i \in \boldsymbol{n-1}} \text{im}(u_{x_i,x_{i+1}}),\\
		\bigcup_{i \in \boldsymbol{n-1}} \text{im}(u_{v_i,v_{i+1}})\backslash \big((im(u)^\text{c}\cap e_n)\big) & \subseteq \bigcup_{i \in \boldsymbol{n-1}} \text{im}(u_{x_i,x_{i+1}}),\\
		\bigcup_{i \in \boldsymbol{n-1}} \text{im}(u_{v_i,v_{i+1}})\backslash \big((im(u)^\text{c}\cap e_1)\big) & \subseteq \bigcup_{i \in \boldsymbol{n-1}} \text{im}(u_{x_i,x_{i+1}}), \text{ and }\\
		\bigcup_{i \in \boldsymbol{n-1}} \text{im}(u_{v_i,v_{i+1}})\backslash \big((im(u)^\text{c}\cap e_1) \cup (im(u)^\text{c}\cap e_n) \big) & \subseteq \bigcup_{i \in \boldsymbol{n-1}} \text{im}(u_{x_i,x_{i+1}}),
	\end{align*}
	respectively. We will show the latter, then the rest are similar. We will do this by showing for each $j\in\{1,...,n-1\}$ we have
	\begin{equation*}
		\text{im}(u_{v_j,v_{j+1}})\backslash \big((im(u)^\text{c}\cap e_1) \cup (im(u)^\text{c}\cap e_n) \big) \subseteq \bigcup_{i \in \boldsymbol{n-1}} \text{im}(u_{x_i,x_{i+1}}),
	\end{equation*}
	assuming $x_j$ and $x_{j+1}$ do not lie on the same edge (otherwise this is trivial).
	
	First suppose that there exists $t< \sum_{i}^{j-1}a_{x_{i},x_{i+1}}$ such that $u(t)\in \text{Vert}(T)$ or $u(t)$ lies on a different edge, and, that there exists some $t'> \sum_{i}^{j}a_{x_{j},x_{j+1}}$ such that $u(t')\in \text{Vert}(T)$ or $u(t')$ lies on a different edge to $x_j$. Then the relevant geodesics can be depicted as follows,
	\begin{center}
		\begin{tikzpicture}
			\draw[thick] (0,0) -- (3,0);
			\draw[thick] (3,0.5) -- (6,0.5);
			\draw[dashed] (3,0) -- (3,0.5);
			
			\node at (0,-0.25) {$x_{j-k}$};
			\node at (3,-0.25) {$x_{j}$};
			\node at (3,0.75) {$x_{j}$};
			\node at (6,0.75) {$x_{j+1}$};
			
			\node at (1.5,0) {$\bullet$}; 
			\node at (4.5,0.5) {$\bullet$};
			
			\node at (1.5,0.25) {$v_j$};
			\node at (4.5,0.75) {$v_{j+1}$};
		\end{tikzpicture}
	\end{center}
	for some $k$, and $v_{j+1}$ lies somewhere on this diagram (on the left line if both $x_j$ and $x_{j+1}$ lie on the same edge, and on the right line otherwise). Then, this reduces to the following geodesic diagram,
	\begin{center}
		\begin{tikzpicture}
			\draw[thick] (-2,0) -- (0,0);
			\draw[thick] (1,1) -- (0,0) -- (1,-1);
			
			\node at (-2.25,0) {$x_{j}$};
			\node at (1,1.25) {$x_{j+1}$};
			\node at (1,-1.25) {$x_{j-k}$};
		\end{tikzpicture}
	\end{center}
	where $v_j$ and $v_{j+1}$ lie somewhere on this diagram, and we are allowing for constant geodesics. Hence, the image of the unique geodesic $u_{v_j,v_{j+1}}$ lies on this diagram in the image of $u$.
	
	Next, suppose there exists no such $t< \sum_{i}^{j-1}a_{x_{j},x_{j+1}}$ and $t'> \sum_{i}^{j}a_{x_{j},x_{j+1}}$ such that $u(t)$ or $u(t')$ lie on vertices or $u(t)$ lies on a different edge to $x_j$ or $u(t')$ lies on a different edge to $x_{j+1}$. Then $v_j = v_1$ and $v_{j+1}=v_n$, and $x_j$ lies on $e_1$ and $x_{k+1}$ lies on $e_n$. Then by definition, the following concatenation of geodesics is itself a geodesic,
	\begin{center}
		\begin{tikzpicture}
			\draw[thick] (0,0) -- (8,0);
			
			\node at (0,-0.25) {$v_1=v_j$};
			\node at (8,-0.25) {$v_n=v_{j+1}$};
			\node at (0,0) {$\bullet$};
			\node at (8,0) {$\bullet$};
			
			\node at (1.5,0) {$\bullet$}; 
			\node at (6.5,0) {$\bullet$};
			\node at (1.5,-0.25) {$x_{j}$};
			\node at (6.5,-0.25) {$x_{j+1}$};
			
			\node at (3,0) {$\bullet$}; 
			\node at (5,0) {$\bullet$};
			\node at (3,-0.25) {$v$};
			\node at (5,-0.25) {$v'$};
			
			\draw[thick,opacity=0.25] (0,0.25) -- (0,0.5) -- (3,0.5) -- (3,0.25);
			\node[opacity = 0.25] at (1.5,0.75) {$e_1$}; 
			
			\draw[thick,opacity=0.25] (5,0.25) -- (5,0.5) -- (8,0.5) -- (8,0.25);
			\node[opacity = 0.25] at (6.5,0.75) {$e_n$}; 
		\end{tikzpicture}
	\end{center}
	i.e., $\text{im}(u_{v_j,v_{j+1}})$ is just the union of $e_1,e_n$ and $\text{im}(u_{x_j,x_{j+1}})$. Then, we get
	\begin{equation*}
		\text{im}(u_{v_j,v_{j+1}})\backslash \big((im(u)^\text{c}\cap e_1) \cup (im(u)^\text{c}\cap e_n) \big) \subseteq \bigcup_{i \in \boldsymbol{n-1}} \text{im}(u_{x_i,x_{i+1}}).
	\end{equation*}
	The other two cases are similar to this one, that is the case when there exists $t< \sum_{i}^{j-1}a_{x_{j},x_{j+1}}$ with $u(t)\in \text{Vert}(T)$ or lying on a different edge to $x_{j}$ but no $t'> \sum_{i}^{j}a_{x_{j},x_{j+1}}$ with $u(t')\in \text{Vert}(T)$ or lying on a different edge to $x_{j+1}$, and the symmetrical case where there exists $t'> \sum_{i}^{j}a_{x_{j},x_{j+1}}$ with $u(t')\in \text{Vert}(T)$ or lying on a different edge to $x_{j+1}$ but no $t< \sum_{i}^{j-1}a_{x_{j},x_{j+1}}$ such that $u(t)\in \text{Vert}(T)$ or lying on a different edge to $x_{j}$. Hence, the result holds.
\end{proof}

%%%%%%%%%%%%%%%%%%%%%%%%%%%%%%%%%%%%%%%%%%%%%%%%%%%%%%%%%%%%%%%%%%%%%%%%%%%%%%%%%%%%%%%%%%%%%%%%%%%%%%%%%%%%%%%%%%%%%%%%%%%%%%%%%%%%%%%%%%%%%%%%%%%%%%%%%%%%%%%%%%%%%%%%%%%%%%%%%%%%%%%%%%%%%%%%%%%%%%%%%%%%%%%%%%%%%%%%%%%%%%%%%%%%%%%%%%%%%%%%%%%%%%%%%%%%%%%%%%%%%%%%%%%%%%%%%%%%%%%%%%%%%%%%%%%%%%%%%%%%%%%%%%%%%%%%%%%%%%%%%%%%%%%%%%%%%%%%%%%%%%%%%%%%%%%%%%%%%%%%%%%%%%%%%%%%%%%%%%%%%%%%%%%%%%%%%%%%%%%%%%%%%%%%%%%%%%%%%%%%%%%%%%%%%%%%%%%%%%%%%%%%%%%%%%%%%%%%%%%%%%%%%%%%%%%%%%%%

\section{Coarse Homotopy and Ends of Locally Finite Geometric Trees}

Now, we will show that a suitable subcategory of locally finite geometric trees satisfies the questions posed at the end of Section \ref{section_7}. First, the following common fact in category theory will be useful.

\begin{lemma}[Lemma 1.3.11, \cite{leinster2014basic}]\label{leinster_nat_iso}
	Let $\A$ and $\B$ be categories, $F,G:A\to B$ be functors and $\eta : F \Rightarrow G$ a natural transformation. Then $\eta$ is a natural isomorphism if and only if each for each object $A\in \A$ the component $\eta_A$ is an isomorphism.
\end{lemma}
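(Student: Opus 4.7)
The plan is to prove both directions of the biconditional directly from the definitions, with the nontrivial content living entirely in the backward direction.

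For the forward direction, I would simply unpack the definition of a natural isomorphism: by assumption there exists a natural transformation $\eta^{-1}: G \Rightarrow F$ with $\eta \circ \eta^{-1} = 1_G$ and $\eta^{-1} \circ \eta = 1_F$ as natural transformations. Evaluating each of these identities at an arbitrary object $A \in \A$ gives $\eta_A \circ (\eta^{-1})_A = 1_{G(A)}$ and $(\eta^{-1})_A \circ \eta_A = 1_{F(A)}$, so $\eta_A$ is an isomorphism in $\B$ with inverse $(\eta^{-1})_A$.

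For the backward direction, suppose that each component $\eta_A$ is an isomorphism in $\B$. The natural candidate for an inverse transformation is the family $\{(\eta_A)^{-1}\}_{A \in \A}$; the only thing to verify is that this family is natural. Given a morphism $f: A \to A'$ in $\A$, the naturality square for $\eta$ gives
\begin{equation*}
    \eta_{A'} \circ F(f) = G(f) \circ \eta_A.
\end{equation*}
Pre-composing with $(\eta_A)^{-1}$ on the right and post-composing with $(\eta_{A'})^{-1}$ on the left yields
\begin{equation*}
    F(f) \circ (\eta_A)^{-1} = (\eta_{A'})^{-1} \circ G(f),
\end{equation*}
which is precisely the naturality square for the inverse family. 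Hence this defines a natural transformation $\eta^{-1}: G \Rightarrow F$, and it is an inverse to $\eta$ in the functor category since the compositions are the identity component-wise.

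There is no real obstacle here since this is a standard fact; the only subtle point is the observation that naturality of the inverse family is automatic from naturality of $\eta$, which is why the result holds without any further hypotheses on $\A$ or $\B$.
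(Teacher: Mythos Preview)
Your argument is correct and is exactly the standard proof of this elementary fact. Note that the paper itself does not supply a proof of this lemma at all: it is simply quoted from Leinster with a citation, so there is no in-paper argument to compare against.
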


The above lemma tells us that if we wish to show the natural surjection in Theorem \ref{ends_natural_surjection} is a natural isomorphism when restricted to some subcategory of $\textbf{PGeo}^{\text{Crs}}$, we need only show the map $\eta_X : \pi_0^{\text{Crs}}(X) \to \ends(X)$ is an injection for each such space $X$ in the chosen subcategory. That is, for any pair of appropriate rays $\alpha,\alpha':\R_{\geq 0} \to X$, we have that $\text{end}(\alpha^*) = \text{end}((\alpha')^*)$ implies $\pi_0^{\text{Crs}}(\alpha) = \pi_0^{\text{Crs}}(\alpha')$. In this section, we do this for locally finite geometric trees.

\begin{definition}
	Let $P$ be a poset. A \textbf{chain} $C$ in $P$ is a totally ordered subset of $P$. Call a chain $C$ \textbf{order-convex} if for all $a,b \in C$ and $d\in P$ such that $a\leq d \leq b$ we have $d\in C$. 
\end{definition}

The above definitions of chains and order-convexivity are common ones (see \cite{semenova2004sublattices} for instance).

\begin{proposition}\label{poset_tree}
	Let $T$ be a locally finite geometric tree and let $v_0 \in \text{Vert}(T)$. Then there is a canonical poset structure on $\text{Vert}(T)$ where $v \leq v'$ if and only if $d_T(v_0,v) \leq d_T(v_0,v')$. Further, this poset structure has all meets.
	
	Also, $v_0$-rooted geodesic rays in $T$ are in one to one correspondence with infinite order-convex chains in this poset structure with least element $v_0$.
\end{proposition}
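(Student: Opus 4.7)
The plan is to first pin down the intended poset relation (which must be read as $v \leq v'$ iff $v \in \text{im}(u_{v_0, v'})$, i.e.\ $d_T(v_0,v) + d_T(v,v') = d_T(v_0, v')$; distance comparison alone would only give a preorder), then verify the poset axioms and meet-completeness, and finally construct the advertised bijection. Reflexivity is immediate. Antisymmetry follows because if $v \leq v'$ and $v' \leq v$ then both geodesics $u_{v_0,v}, u_{v_0,v'}$ coincide by unique geodesicity, forcing equal distances and thus $v = v'$. Transitivity comes from concatenating $u_{v_0,v} \cup u_{v,v'} \cup u_{v',v''}$ into a single geodesic from $v_0$ to $v''$ using Lemma \ref{concat_of_geos_is_geo}, which then shows $v$ lies on $u_{v_0, v''}$.

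For binary meets, given $v, v' \in \text{Vert}(T)$, I would use Proposition \ref{geos_are_sequences_of_verts} to express $u_{v_0, v}$ and $u_{v_0, v'}$ as non-repeating sequences of adjacent vertices $v_0 = u_0, u_1, \ldots, u_m = v$ and $v_0 = u_0', u_1', \ldots, u_n' = v'$. Let $k$ be maximal with $u_i = u_i'$ for all $i \leq k$; the claim is that $w := u_k = v \wedge v'$. By construction $w \leq v$ and $w \leq v'$, and any other common lower bound $w''$ lies on both $u_{v_0, v}$ and $u_{v_0, v'}$, hence on their common initial segment, forcing $w'' \leq w$. Arbitrary meets reduce to the binary case once one checks that the chain of common lower bounds of any family is finite (bounded above in tree-distance by any member of the family).

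For the bijection, the forward map sends a $v_0$-rooted geodesic ray $r$ to the chain $C_r := \{r(n) : n \in \Z_{\geq 0}\} \subseteq \text{Vert}(T)$; since $r$ is an isometric embedding starting at $v_0$ and edges have unit length, integer points of $r$ land on vertices. For $m \leq n$, $r|_{[0,n]}$ is the geodesic from $v_0$ to $r(n)$, so $r(m) \leq r(n)$ in the poset, and $C_r$ is a chain with least element $v_0$. Order-convexity is immediate: if $r(m) \leq w \leq r(n)$ for a vertex $w$, then $w \in \text{im}(u_{v_0, r(n)}) = r([0,n])$, and the distance from $v_0$ fixes $w = r(k)$ for some integer $k \in [m,n]$.

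The main obstacle is the inverse direction. Given an infinite order-convex chain $C$ with least element $v_0$, I would first argue the map $d_T(v_0, -) : C \to \Z_{\geq 0}$ is a bijection. Injectivity comes from the chain property: if $w, w' \in C$ satisfy $d_T(v_0, w) = d_T(v_0, w')$, then comparability forces $w = w'$ (each of $w \leq w'$ and $w' \leq w$ implies equality in this case). Surjectivity uses local finiteness of $T$ together with infinity of $C$ to force distances in $C$ to be unbounded, and then order-convexity promotes this to every intermediate integer distance being realized (the unique vertex on $u_{v_0, w}$ at distance $n$ lies in $C$ whenever $w \in C$ has $d_T(v_0, w) \geq n$). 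This yields a sequence $v_0 = w_0, w_1, w_2, \ldots$ with $d_T(v_0, w_n) = n$ and $w_n \leq w_{n+1}$, forcing $w_n, w_{n+1}$ adjacent. The ray $r(t) := u_{w_{\floor{t}}, w_{\ceil{t}}}(t - \floor{t})$ is then an isometric embedding by iterated application of Lemma \ref{concat_of_geos_is_geo}. Verifying the two constructions are mutually inverse is then routine.
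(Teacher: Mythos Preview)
Your argument is correct and follows the same strategy as the paper's own proof: send a ray to its set of integer-time vertices, and conversely glue unit edges along a chain to produce a ray. Your version is substantially more rigorous than the paper's, which dispatches the poset axioms as ``straightforward'', does not explicitly argue meets, and only sketches the bijection in a couple of sentences.

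One point worth emphasising: you are right to reinterpret the order relation as $v \leq v'$ iff $v \in \text{im}(u_{v_0,v'})$. As literally stated, $v \leq v'$ iff $d_T(v_0,v) \leq d_T(v_0,v')$ is only a total preorder---antisymmetry fails for any two distinct vertices equidistant from $v_0$---and meets would be nonsensical. The paper does not flag this, but its subsequent uses of the proposition (e.g.\ the existence of a unique underlying geodesic ray in a proper Lipschitz ray) implicitly rely on your corrected reading. Your explicit use of local finiteness to force unbounded distances in an infinite chain is also a step the paper leaves to the reader.
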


\begin{proof}
	The first claim is relatively straightforward, just a quick check of the poset axioms. For the second claim, using the intermediate value theorem and the absence of cycles we see that each $v_0$-rooted geodesic ray contains in its image a unique infinite order-convex chain with least element $v_0$. We then define the correspondence by mapping each $v_0$-rooted geodesic to this unique underlying change. 
	
	To see this is injective, it is enough to notice that if two $v_0$-rooted geodesic rays contain the same underling chain, and then by uniqueness of geodesics, they must be the same geodesic ray. To see this is a surjective mapping, we simply take an infinite order-convex chain with least element $v_0$ and construct a $v_0$-rooted geodesic by joining each vertex with an isometric mapping of the unit interval.
\end{proof}

\begin{proposition}\label{geos_in_trees_diverge}
	Let $T$ be a locally finite geometric tree with root $v_0$ and $r,r':\R_{\geq 0} \to T$ be $v_0$-rooted geodesic rays. If there exists a vertex $v$ in $T$ such that $v \in \text{im}(r)\backslash \text{im}(r')$ then there exist some $h\geq 0$ such that $r(h)=r'(h)\in \text{Vert}(T)$ and for all $h'>h$ we have $r(h')\neq r'(h')$.
\end{proposition}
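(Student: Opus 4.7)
The plan is to use uniqueness of geodesics in locally finite geometric trees (Lemma \ref{concat_of_geos_is_geo}) to argue that two $v_0$-rooted geodesic rays that agree at some parameter $h'$ must agree on the entire initial segment $[0,h']$, so that ``where they agree'' is an initial interval of $\R_{\geq 0}$.

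More concretely, I would begin by noting that since $v_0 \in \text{Vert}(T)$ and all edges of $T$ are isometric to $[0,1]$, any $v_0$-rooted geodesic ray traverses edges in sequence, so its vertex image is precisely $\{r(n) : n \in \Z_{\geq 0}\}$, and likewise for $r'$. Fix $n_0 \in \Z_{\geq 0}$ with $r(n_0) = v$, which exists because $v \in \text{im}(r)$ is a vertex. Define
\begin{equation*}
    S \coloneqq \{n \in \Z_{\geq 0}\ |\ r(n) = r'(n)\}.
\end{equation*}
The set $S$ is nonempty since $0 \in S$. I would then show $S$ is downward-closed: if $n \in S$, then $r|_{[0,n]}$ and $r'|_{[0,n]}$ are both geodesic segments from $v_0$ to the common endpoint $r(n) = r'(n)$, so by unique geodesicity (Lemma \ref{concat_of_geos_is_geo}) they coincide, giving $\{0,1,\dots,n\}\subseteq S$.

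Next, I would show $S$ is bounded above by $n_0 - 1$: if $n_0 \in S$, then $v = r(n_0) = r'(n_0) \in \text{im}(r')$, contradicting the hypothesis. Thus $S$ is a finite nonempty set; set $h \coloneqq \max S$. Then $r(h) = r'(h)$ is a vertex since $h$ is a nonnegative integer. It remains to show that $r(h') \neq r'(h')$ for all $h' > h$. Suppose for contradiction that $r(h') = r'(h')$ for some $h' > h$. Then $r|_{[0,h']}$ and $r'|_{[0,h']}$ are geodesic segments from $v_0$ to this common point and hence coincide by unique geodesicity, so in particular $r(\lceil h' \rceil) = r'(\lceil h' \rceil)$, meaning $\lceil h'\rceil \in S$ with $\lceil h'\rceil > h$, contradicting maximality of $h$.

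No single step here is a real obstacle; the only subtlety is handling possibly non-integer $h'$ in the final step, which is dispatched by the same uniqueness-of-geodesics argument used throughout. The conceptual content is simply that in a uniquely geodesic space, two geodesic rays from a common root cannot ``re-meet'' after they first diverge, and the divergence point must itself be a vertex because both rays are composed of unit-length edge traversals.
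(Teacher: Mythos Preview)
Your approach is essentially the paper's: both arguments hinge on the single observation that if $r(h')=r'(h')$ for some $h'$, then unique geodesicity forces $r|_{[0,h']}=r'|_{[0,h']}$. The paper phrases this as a contradiction argument (iterating the negated conclusion to push agreement past the parameter of $v$), while you package it as ``take $h=\max S$'' for the set $S$ of integer agreement points; these are the same idea.

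There is one small slip in your final step. From $r|_{[0,h']}=r'|_{[0,h']}$ you write ``so in particular $r(\lceil h'\rceil)=r'(\lceil h'\rceil)$'', but when $h'\notin\Z$ we have $\lceil h'\rceil>h'$, so this does not follow from agreement on $[0,h']$ alone. The conclusion is still true, but it needs one more sentence: the common point $r(h')=r'(h')$ is a non-vertex, hence lies in the interior of a unique edge of $T$; one endpoint of that edge is $r(\lfloor h'\rfloor)=r'(\lfloor h'\rfloor)$, so the other endpoint must be both $r(\lceil h'\rceil)$ and $r'(\lceil h'\rceil)$. (Equivalently, you could avoid the issue entirely by noting that if $h<h'<h+1$ then $r|_{(h,h')}=r'|_{(h,h')}$ already forces both rays onto the same edge out of $r(h)$, giving $h+1\in S$.) With that patch your argument is complete.
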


\begin{proof}
	Suppose, for the sake of contradiction, that there exists some vertex $v\in im(r)\backslash im(r')$, but for all $h\geq 0$ with $r(h)=r'(h)\in \text{Vert}(T)$ there exists some $h'>h$ such that $r(h') = r'(h')$. If $r(h)=r'(h)$ for any $h\geq 0$, then $r(h')=r'(h')$ for all $h' \leq h$. This follows from the uniqueness of geodesics. If $h\in \Z_{\geq 0}$ is the unique integer with $r(h) = v$, choose any $h' >h$, and $v\in im(r)$, contradicting our assumption.
\end{proof}

\begin{proposition}
	Let $r,r': \R_{\geq 0} \to T$ be geodesic rays rooted at $v_0$. Then $\text{end}(r)= \text{end}(r')$ if and only if $r=r'$.
\end{proposition}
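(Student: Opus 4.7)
The reverse implication $r=r' \Rightarrow \text{end}(r)=\text{end}(r')$ is immediate. For the forward direction, I plan to prove the contrapositive: if $r\neq r'$, then $\text{end}(r)\neq\text{end}(r')$. By Proposition \ref{poset_tree}, the $v_0$-rooted geodesic rays in $T$ correspond bijectively to infinite order-convex chains in $\text{Vert}(T)$ with least element $v_0$; the chain underlying such a ray is the set of vertices in its image. Therefore $r\neq r'$ forces the underlying chains to differ, and (after swapping $r$ and $r'$ if necessary) there exists a vertex $v\in \text{im}(r)\setminus\text{im}(r')$.

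With such a $v$ in hand, I would apply Proposition \ref{geos_in_trees_diverge} to extract an $h\geq 0$ with $v^*\coloneqq r(h)=r'(h)\in\text{Vert}(T)$ and $r(h')\neq r'(h')$ for every $h'>h$. This $v^*$ plays the role of the unique last common point after which the two rays permanently diverge. To produce an end-distinguishing bounded set, I would simply take $B\coloneqq\{v^*\}$, which is bounded in $T$ (it has diameter zero).

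The remaining step is to verify that for every $t\geq 0$, the tails $\text{im}(r|_{[t,\infty)})$ and $\text{im}(r'|_{[t,\infty)})$ never lie in a common path component of $T\setminus B$. For $t\leq h$, $v^*\in\text{im}(r|_{[t,\infty)})$, so the tail is not even contained in $T\setminus B$. For $t>h$, both rays, being isometric embeddings, are injective, hence avoid $v^*$ from then on; moreover, picking any $\epsilon>0$ small enough, $r(h+\epsilon)$ and $r'(h+\epsilon)$ each lie at distance $\epsilon$ from $v^*$ on a unique edge of $T$ incident to $v^*$, and these two edges must be distinct (otherwise both points would coincide with the unique point at distance $\epsilon$ from $v^*$ on that shared edge, contradicting $r(h+\epsilon)\neq r'(h+\epsilon)$). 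Since $T$ is a tree, $T\setminus\{v^*\}$ is a disjoint union of path components, one for each edge incident to $v^*$, and the tails $r((h,\infty))$ and $r'((h,\infty))$ remain in different such components. Hence $\text{end}(r)\neq\text{end}(r')$.

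I do not anticipate any real obstacle: the work is entirely powered by Propositions \ref{poset_tree} and \ref{geos_in_trees_diverge}, together with the elementary tree-theoretic fact that removing a vertex disconnects the tree into components indexed by incident edges. The only point that requires a moment's care is justifying that $r$ and $r'$ leave $v^*$ along distinct edges, and this is handled by the isometry of each ray applied near parameter $h$.
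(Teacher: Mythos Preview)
Your argument is correct and follows essentially the same route as the paper: both isolate the last common vertex $v^*$ via Proposition~\ref{geos_in_trees_diverge} and then invoke acyclicity of $T$ to conclude that the two tails cannot be joined once $v^*$ is removed. The only cosmetic differences are that the paper phrases the argument as a contradiction (a path joining the tails outside $CB(v_0,d_T(v_0,v^*))$ would yield a cycle) rather than a contrapositive, and uses that closed ball as the separating bounded set rather than your singleton $\{v^*\}$; your version is somewhat more explicit, in particular in first invoking Proposition~\ref{poset_tree} to produce the vertex needed as input to Proposition~\ref{geos_in_trees_diverge}.
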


\begin{proof}
	The backwards implication is obvious, so for the forward implication suppose $r\neq r'$ but $\text{end}(r)= \text{end}(r')$. Then there exist some $h>0$ such that $r(h)=r'(h)\in \text{Vert}(T)$ and for all $h'>h$ we have $r(h')\neq r'(h')$. By assumption, there is a continuous path connecting $r|_{[h,\infty)}$ and $r'|_{[h',\infty)}$ in $T \backslash CB(v_0,d_T(v_0,r(t)))$ for some $h'$. But then we necessarily have a cycle in $T$ and we reach a contradiction.
\end{proof}

We also have the following.

\begin{proposition}\label{geo_pi0_iff_equal}
	Let $T$ be a locally finite geometric tree, $v_0 \in \text{Vert}(T)$ and $r,r':\R_{\geq 0} \to X$ be $v_0$-rooted geodesic rays. Then $[r]^{\text{Crs}}_0=[r']^{\text{Crs}}_0$ if and only if $r=r'$.
\end{proposition}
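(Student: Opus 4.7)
The plan is to deduce this proposition almost immediately from the two preceding results together with the naturality established in Theorem \ref{ends_natural_surjection}. The backward direction is trivial, so I focus on the forward direction: assume $[r]_0^{\text{Crs}} = [r']_0^{\text{Crs}}$ and aim to conclude $r = r'$.

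First, I observe that for a geodesic ray $r : \R_{\geq 0} \to T$ the map $r^* : \R_{\geq 0} \to T$ defined in Definition \ref{continuousray} coincides with $r$ itself. Indeed, $T$ is uniquely geodesic by Lemma \ref{concat_of_geos_is_geo}, so the geodesic segment from $r(\floor{h})$ to $r(\ceil{h})$ used in the definition of $r^*$ must equal $r|_{[\floor{h},\ceil{h}]}$. Consequently, the natural surjection $\eta_T : \pi_0^{\text{Crs}}(T) \to \ends(T)$ from Theorem \ref{ends_natural_surjection} sends $[r]_0^{\text{Crs}}$ to $\text{end}(r^*) = \text{end}(r)$, and likewise for $r'$.

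Applying $\eta_T$ to the assumed equality $[r]_0^{\text{Crs}} = [r']_0^{\text{Crs}}$ then gives $\text{end}(r) = \text{end}(r')$. The immediately preceding proposition (that $v_0$-rooted geodesic rays in $T$ with equal ends must be equal) now yields $r = r'$, completing the argument.

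Since the heavy lifting has already been done—ends of $v_0$-rooted geodesic rays in trees separate rays (proved via the tree having no cycles), and the natural surjection to $\ends(-)$ is well-defined—there is no real obstacle. The only point requiring any care is the small observation that $r^* = r$ for a geodesic ray, which relies on the unique geodesic property of trees rather than just the existence of geodesics.
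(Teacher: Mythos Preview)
Your argument is correct and there is no circularity: Theorem \ref{ends_natural_surjection} and the unnamed proposition immediately preceding this one are both proved independently of Proposition \ref{geo_pi0_iff_equal}, so you may legitimately invoke them. The observation that $r^{*}=r$ for a geodesic ray in a uniquely geodesic space is exactly the small check needed to make $\eta_T$ land on $\text{end}(r)$.

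The paper, however, does not argue this way. It gives a direct proof by contradiction: assuming $r\neq r'$, Proposition \ref{geos_in_trees_diverge} supplies a branch point $r(h)=r'(h)$ after which the rays never meet again. One then takes a coarse $1$-path $\varphi:r\rightsquigarrow r'$, restricts to integer lattice points to obtain an $A$-Lipschitz map, and uses Lemma \ref{lemma_geo_underlies_concat_of_geos_trees} to see that for every $h'>h$ the geodesic from $r(h')$ to $r'(h')$ --- which must pass through the branch point $r(h)$ --- is covered by the short geodesic segments between consecutive values $\varphi(i,h')$. This forces some $\varphi(i,h')$ to lie within distance $A$ of the fixed point $r(h)$ for every large $h'$, contradicting properness of $\varphi$.

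Your route is shorter and conceptually cleaner, since the work has been absorbed into $\eta_T$ and the ends proposition. The paper's route is self-contained at the level of tree geometry and does not appeal to the natural transformation; it also makes explicit the mechanism (the branch point is a bottleneck that any coarse $1$-path must repeatedly visit) that underlies both the ends statement and this one. Either approach is perfectly adequate here.
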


\begin{proof}
	Again, the backwards implication is straightforward, so for the forward implication suppose $r\neq r'$ but $[r]^{\text{Crs}}_0=[r']^{\text{Crs}}_0$. Then there exist some $h>0$ such that $r(h)=r'(h)\in \text{Vert}(T)$ and for all $h'>h $ we have $r(h')\neq r'(h')$. Then there is a coarse $1$-path $\varphi:r\myrightsquigarrow r'$ such that $\varphi|_{c_1([0,1])\cap \Z^2}$ is proper $A$-Lipschitz for some $A$. Notice, by Lemma \ref{concat_of_geos_is_geo}, for each $h'>h$ we have $r(h)\in \text{im}(u_{r(h'),r'(h')})$. Then, by Lemma \ref{lemma_geo_underlies_concat_of_geos_trees} we have
	\begin{equation*}
		\text{im}(u_{r(h'),r'(h')}) \subseteq \cup_{i\in \boldsymbol{h'-1}}\text{im}(u_{\varphi(i,h'),\varphi(i+1,h')}).
	\end{equation*}
	for each $h'>h$. Then, for all $h'>h$ there exists $(h't,h') \in c_1([0,1])\cap \Z^2$ such that we have $\varphi(h't,h') \in CB(r(h),A)$ and $\varphi$ is not proper, and we reach a contradiction.
\end{proof}

With the above two propositions, and the fact that every end of a proper geodesic space can be represented by some rooted geodesic ray for any choice of root, we need only show that coarse path component may be represented by some rooted geodesic ray, again for any choice of root.

\begin{proposition}
	Let $T$ be a locally finite geometric tree, $v_0\in \text{\text{Vert}(T)}$, and $\alpha^*:\R_{\geq 0} \to T$ be a $v_0$-rooted proper Lipschitz ray in $T$ constructed with geodesics as in Lemma \ref{every_crs_ray_close_to_plip_ray}, for some coarse ray $\alpha: \R_{\geq 0} \to X$. Then there exists a unique $v_0$-rooted geodesic ray $r:\R_{\geq 0} \to T$ such that $\text{im}(r)\subseteq \text{im}(\alpha^*)$.
\end{proposition}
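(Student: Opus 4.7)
The plan is to construct $r$ by applying König's lemma to a locally finite subtree of $T$ built from the vertices lying in the image of $\alpha^*$, and then to establish uniqueness via the properness of $\alpha^*$.

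First I would show that $\text{im}(\alpha^*)$ is closed under taking geodesic segments from $v_0$: for any $x = \alpha^*(h) \in \text{im}(\alpha^*)$, the restriction $\alpha^*|_{[0,h]}$ is the concatenation of the geodesic segments $u_{\alpha(i),\alpha(i+1)}$ for $i = 0, \ldots, \lfloor h \rfloor - 1$ followed by a partial geodesic from $\alpha(\lfloor h \rfloor)$ to $x$. Applying Lemma \ref{lemma_geo_underlies_concat_of_geos_trees} to the sequence $v_0 = \alpha(0), \alpha(1), \ldots, \alpha(\lfloor h \rfloor), x$ then yields $\text{im}(u_{v_0, x}) \subseteq \text{im}(\alpha^*|_{[0,h]}) \subseteq \text{im}(\alpha^*)$. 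In particular, setting $V := \text{Vert}(T) \cap \text{im}(\alpha^*)$, every vertex on the geodesic $u_{v_0, v}$ lies in $V$ whenever $v \in V$.

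Next I would build a rooted subtree $\mathcal{T}$ with vertex set $V$ by declaring the parent of each $v \in V \setminus \{v_0\}$ to be the unique vertex adjacent to $v$ on the geodesic $u_{v_0, v}$ (which lies in $V$ by the previous step). The tree $\mathcal{T}$ is locally finite because $T$ is, and it is infinite because properness of $\alpha^*$ forces $\text{im}(\alpha^*)$ to be unbounded, giving vertices of $V$ at every non-negative integer distance from $v_0$. König's lemma then supplies an infinite branch $v_0 = w_0, w_1, w_2, \ldots$ in $\mathcal{T}$; this is an infinite order-convex chain in the poset of Proposition \ref{poset_tree} and thus corresponds to a $v_0$-rooted geodesic ray $r : \R_{\geq 0} \to T$. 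Since each edge $\text{im}(u_{w_i, w_{i+1}})$ lies on the geodesic $u_{v_0, w_{i+1}} \subseteq \text{im}(\alpha^*)$, we obtain $\text{im}(r) \subseteq \text{im}(\alpha^*)$.

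For uniqueness, I would suppose a second such ray $r'$ exists with $r \neq r'$. By Proposition \ref{geos_in_trees_diverge}, $r$ and $r'$ agree up to some vertex $v$ and then enter distinct edges, so their tails lie in distinct connected components $C, C'$ of $T \setminus \{v\}$. Properness of $\alpha^*$ makes $(\alpha^*)^{-1}(\{v\})$ bounded, hence contained in some $[0, T_0]$, while the image $\alpha^*([0, T_0])$ is then bounded and thus contained in some $CB(v_0, R_0)$. The restriction $\alpha^*|_{(T_0, \infty)}$ is a continuous map into the open set $T \setminus \{v\}$ and so lies entirely in a single connected component. However, the tails of $r$ and $r'$ outside $CB(v_0, R_0)$ must be realised by $\alpha^*(t)$ with $t > T_0$, yet they inhabit different components $C, C'$, yielding the required contradiction. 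The hard part will be this final argument: care is needed to deploy properness on two fronts, first to confine the returns of $\alpha^*$ to $v$ to a bounded parameter window, and then to force the divergent tails of $r, r'$ into the post-$T_0$ segment of $\alpha^*$.
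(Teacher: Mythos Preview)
Your proposal is correct and follows essentially the same skeleton as the paper: extract an infinite chain in the poset of Proposition~\ref{poset_tree} to obtain the geodesic ray, then invoke Proposition~\ref{geos_in_trees_diverge} together with properness of $\alpha^*$ for uniqueness. The differences are ones of explicitness rather than strategy. For existence, the paper argues by contradiction (no infinite order-convex chain forces a maximal-distance vertex, hence a bounded image, contradicting properness), which amounts to an implicit use of K\"onig's lemma; you invoke K\"onig directly, and you also verify beforehand via Lemma~\ref{lemma_geo_underlies_concat_of_geos_trees} that $\text{im}(\alpha^*)$ is closed under taking geodesics from $v_0$, a point the paper leaves tacit but which is needed to know that the chain one finds actually sits inside $\text{im}(\alpha^*)$. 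For uniqueness, the paper simply notes that past the last time $\alpha^*$ hits the branch vertex $v$, the image of $\alpha^*$ can contain at most one of the two diverging tails, so one of $\text{im}(r),\text{im}(r')$ is trapped in the bounded set $\alpha^*[0,R)$; your connected-components formulation is the same argument stated more topologically. Your version buys rigour; the paper's buys brevity.
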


\begin{proof}
	Equip $T$ with a the canonical poset structure based at $v_0$. First, we will show existence. By Proposition \ref{poset_tree} it suffices to show there exists is infinite order-convex chain with least element $v_0$ contained in the image of $\alpha^*$. For the sake of contradiction, suppose no such sequence exists. Then there exists some $x \in \text{Vert}(T)$ such that 
	\begin{equation*}
		d_T(x,v_0)=\text{max}\{d_T(x',v_0)\ |\ x\in \text{Vert}(T)\cap\text{im}(\alpha) \}.
	\end{equation*}
	As such, $\text{im}(\alpha^*) \subseteq \text{CB}(v_0,d_T(x,v_0)+1)$ and we reach a contradiction since $\alpha^*$ is proper. Order convexity follows from the intermediate value theorem.
	
	Next, we will show uniqueness. Suppose there is a second $v_0$-rooted geodesic ray $r'$ in $T$ with $\text{im}(r')\subseteq \text{im}(\alpha)$ then by Proposition \ref{geos_in_trees_diverge} there exists some $z \in \Z_{\geq 0}$ such $x \coloneqq r(z) = r'(z)$ but $r(k)\neq r'(k)$ for any $z<k\in \R_{\geq 0}$. By the properness of $\alpha^*$, there exists some $R>0$ such that $\alpha^*[R,\infty) \cap \{x\}=\emptyset$. Therefore, either $\text{im}(r)\subseteq \alpha^*[0,R)$ or $\text{im}(r')\subseteq \alpha^*[0,R)$. In either case, we reach a contradiction by the properness of geodesic rays.
\end{proof}

\begin{proposition}\label{tree_crs_to_geo_ray}
	Let $T$ be a locally geometric finite tree, $v_0\in \text{Vert}(T)$ and $\alpha: \R_{\geq 0} \to T$ be some coarse ray. Then there is a coarse $1$-path from $\alpha$ to some $v_0$-geodesic ray in $T$.
\end{proposition}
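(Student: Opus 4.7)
The plan is to first reduce to a geometrically nicer representative and then construct an explicit coarse homotopy. By Corollary \ref{any_root}, $\alpha$ is close to some $v_0$-rooted proper Lipschitz ray $\alpha^*$, and by Example \ref{close_is_crs_hom} combined with Corollary \ref{homiffpath} this already gives a coarse $1$-path $\alpha \myrightsquigarrow \alpha^*$. The preceding proposition provides a unique $v_0$-rooted geodesic ray $r$ with $\text{im}(r) \subseteq \text{im}(\alpha^*)$. By concatenation of coarse $1$-paths (Definition \ref{pathconcat}) it therefore suffices to construct a coarse $1$-path, equivalently (by Corollary \ref{homiffpath}) a coarse homotopy, from $\alpha^*$ to $r$.

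For the homotopy, take the coarse map $q(h) \coloneqq d_T(v_0, \alpha^*(h)) + h$. For each $h$, let $u_h : [0, a_h] \to T$ be the unique geodesic from $\alpha^*(h)$ to $r(h)$ (Lemma \ref{concat_of_geos_is_geo}), with $a_h = d_T(\alpha^*(h), r(h)) \leq q(h)$ by the triangle inequality, and define
\[
H : I_q \R_{\geq 0} \to T, \qquad H(h, s) \coloneqq u_h\bigl(\min(s, a_h)\bigr).
\]
This gives $H(h, 0) = \alpha^*(h)$ and $H(h, q(h)) = r(h)$, as required for a coarse homotopy from $\alpha^*$ to $r$.

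Verifying $H$ is coarse is the main work. For control, fixing $h$ and varying $s$ is immediate from the $1$-Lipschitz-ness of $u_h$, and fixing $s$ and varying $h$ amounts to showing that the geodesics $u_h$ and $u_{h'}$ parametrize nearby points when their endpoints are close. This is a routine case analysis on the subtree spanned by $\alpha^*(h), \alpha^*(h'), r(h), r(h')$ in $T$, using the bounds $d_T(\alpha^*(h), \alpha^*(h')) \leq A|h-h'|$ and $d_T(r(h), r(h')) \leq |h-h'|$ (in the spirit of Lemma \ref{trees_six_lemma}).

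The main obstacle is properness. The geodesic $u_h$ necessarily dips toward $v_0$ through the point $m_h$ where the geodesics from $v_0$ to $\alpha^*(h)$ and from $v_0$ to $r(h)$ first diverge; at $m_h$ the function $d_T(v_0, \cdot)$ attains its minimum on $u_h$. The crucial structural claim is that $d_T(v_0, m_h) \to \infty$ as $h \to \infty$, which forces $H^{-1}(CB(v_0, N))$ to be empty for $h$ sufficiently large, hence bounded. To prove this claim, I would argue as follows: by the properness of $\alpha^*$, the preimage $(\alpha^*)^{-1}(\{r(n)\})$ is bounded, so $\alpha^*$ visits $r(n)$ only up to some finite time $T^*_n$; after $T^*_n$ the ray $\alpha^*$ cannot cross the vertex $r(n)$ by continuity, and since it must still visit $r(k)$ for all $k > n$ (by $\text{im}(r) \subseteq \text{im}(\alpha^*)$), it is trapped in the subtree rooted at $r(n)$, forcing $m_h$ to have depth at least $n$ for all $h > T^*_n$. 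Finally, converting $H$ to a coarse $1$-path via Corollary \ref{homiffpath} and concatenating with $\alpha \myrightsquigarrow \alpha^*$ (Definition \ref{pathconcat}) yields the desired coarse $1$-path from $\alpha$ to $r$.
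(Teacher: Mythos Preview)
Your approach is essentially the paper's: reduce to the proper Lipschitz representative $\alpha^*$, take the unique underlying $v_0$-rooted geodesic ray $r$ from the preceding proposition, and build the homotopy by sliding along the geodesics from $\alpha^*(h)$ to $r(h)$, with control handled via Lemma~\ref{trees_six_lemma}. The only substantive difference is in the properness step: where you argue that $\alpha^*$ is eventually trapped in the subtree past $r(n)$ so that $d_T(v_0,m_h)\to\infty$, the paper instead observes (via Lemma~\ref{lemma_geo_underlies_concat_of_geos_trees}) that since $r(h)=\alpha^*(h')$ for some $h'$, the entire geodesic $u_{r(h),\alpha^*(h)}$ lies in $\alpha^*\big([\min(h,h'),\max(h,h')]\big)\subseteq\mathrm{im}(\alpha^*)$, which reduces properness of the homotopy directly to the properness of $\alpha^*$.
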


\begin{proof}
	By Lemma \ref{every_crs_ray_close_to_plip_ray} it suffices to show $\alpha^*$ (which is proper $A$-Lipschitz for some $A$) is coarse homotopic to some $v_0$-geodesic ray in $T$. Specifically, we will show that there is a coarse $1$-path from $\alpha^*$ to its unique underlying $v_0$-rooted geodesic ray $r:\R_{\geq 0} \to T$. We claim $1$-path $\varphi: \alpha^* \myrightsquigarrow r$ defined as follows, suffices.
	\begin{align*}
		\varphi(ht,h)\coloneqq \begin{cases}
			u_{r(h),\alpha^*(h)}\big(a_{r(h),\alpha^*(h)} - (A+1)ht\big) & ht \leq \frac{a_{r(h),\alpha^*(h)}}{A+1},\\
			r(h) & ht \geq \frac{a_{r(h),\alpha^*(h)}}{A+1}.
		\end{cases}
	\end{align*}
	Note, this makes sense since for all $h\in \R_{\geq 0}$, we have
	\begin{align*}
		a_{r(h),\alpha^*(h)} = d_T(\alpha^*(h),r(h)) \leq d_T(\alpha^*(h),\alpha^*(0)) + d_T(r(0),r(h)) \leq Ah + h = (A+1)h.
	\end{align*}
	To see $\varphi$ is proper, let $B$ be bounded in $T$. Notice, for each $h$ there exists some $h'$ such that $\alpha^*(h')=r(h)$. In particular, $\text{im}(u_{r(h),\alpha^*(h)})$ is contained in the images of the sequence of geodesic defined by $\alpha^*|_{[h,h']}$ if $h'\geq h$ or $\alpha^*|_{[h',h]}$ if $h\geq h'$ by Lemma \ref{lemma_geo_underlies_concat_of_geos_trees}. And so, for all bounded $B \subseteq T$, we have
	\begin{equation*}
		\varphi^{-1}[B] \subseteq (\alpha^*)^{-1}[B] \in \B_{c_1([0,1])}
	\end{equation*}
	and $\varphi$ is proper.
	
	To see $\varphi$ is controlled we may employ Proposition \ref{dimatatime}. First suppose $(ht,h),(ht,h') \in c_1([0,1])$ such that $d_{c_1([0,1])}((ht,h),(ht,h'))<R$ for some $R>0$. Then $|h-h'|<R$ and further, we have that $d_T(\alpha(h),\alpha'(h))<AR$. Then set
	$x_3 = \alpha^*(h),y_3 = \alpha^*(h')$, $x_1 = r(h)$, $y_1 = r(')$, and
	\begin{align*}
		x_2 & = u_{r(h),\alpha^*(h)}\big(a_{r(h),\alpha^*(h)} - (A+1)ht\big), \text{ and }\\
		y_2 & = u_{r(h'),\alpha^*(h')}\big(a_{r(h'),\alpha^*(h')} - (A+1)ht\big)
	\end{align*}
	then apply Lemma \ref{trees_six_lemma} to get
	\begin{equation*}
		d_T(\varphi(ht,h),\varphi(ht,h'))<2R,
	\end{equation*}
	and thus $\varphi \times \varphi$ preserves $V_R$.
	
	Now consider some $(ht,h),(ht',h) \in c_1([0,1])$ such that $d_{c_1([0,1])}((ht,h),(ht',h))<R$. Then, without loss of generality we have three cases to consider. First, suppose $ht,ht' \leq \frac{a_{r(h),\alpha^*(h)}}{A + 1}$, then
	\begin{align*}
		d_T\big(\varphi(ht,h),\varphi(ht',h)\big) & = d_T\big(u_{r(h),\alpha^*(h)}(a_{r(h),\alpha^*(h)}-(A+1)ht),\\
		& \hspace*{3cm} u_{r(h),\alpha^*(h)}(a_{r(h),\alpha^*(h)}-(A+1)ht')\big)\\
		& = d_{\R_{\geq 0}}\big((A+1)ht,(A+1)ht'\big)<(A+1)R.
	\end{align*}
	Next suppose $ht \leq \frac{a_{r(h),\alpha^*(h)}}{(A+1)}$ and $ht' \geq \frac{a_{r(h),\alpha^*(h)}}{(A+1)}$, then
	\begin{align*}
		d_T\big(\varphi(ht,h),\varphi(ht',h)\big) & = d_T\big(u_{r(h),\alpha^*(h)}(a_{r(h),\alpha^*(h)}-(A+1)ht),\alpha^*(h)\big)\\
		& \leq d_{\R_{\geq 0}}((A+1)ht,(A+1)ht')<(A+1)R.
	\end{align*}
	Finally suppose $ht,ht' \geq \frac{a_{r(h),\alpha^*(h)}}{(A+1)}$, then
	\begin{align*}
		d_T\big(\varphi(ht,h),\varphi(ht',h)\big) = d_T\big(r(h),r(h))\big)=0 <(A+1)R,
	\end{align*}
	and thus $\varphi\times \varphi$ preserves $U_R$, and hence by Proposition \ref{dimatatime}, $\varphi$ is controlled and we are done.
\end{proof}

Let $\textbf{lfGTree}^\text{Crs}$ be the category of locally finite geometric trees and coarse maps.

\begin{theorem}\label{tree_ends_pi0_thm}
	There exists a natural isomorphism 
	$$\ends(-)|_{\textbf{lfGTree}^{\text{Crs}}} \cong \pi_0^{\text{Crs}}(-)|_{\textbf{lfGTree}^{\text{Crs}}} : \textbf{lfGTree}^{\text{Crs}} \to \textbf{Set}.$$
\end{theorem}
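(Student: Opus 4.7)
The plan is to apply Lemma \ref{leinster_nat_iso} to the natural surjection $\eta : \pi_0^{\text{Crs}}(-) \twoheadrightarrow \ends(-)$ established in Theorem \ref{ends_natural_surjection}. Since surjectivity of each component $\eta_T$ is already in hand, the restriction to $\textbf{lfGTree}^{\text{Crs}}$ will be a natural isomorphism as soon as each $\eta_T$ is also injective. All of the hard work toward that goal has in fact already been done in the preceding preliminary results; the proof will amount to assembling these into a clean argument using a common choice of canonical representatives.

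Concretely, I would fix an arbitrary locally finite geometric tree $T$ and a vertex $v_0 \in \text{Vert}(T)$, and exploit the fact that both invariants admit the same distinguished representatives, namely the $v_0$-rooted geodesic rays. On the $\ends(T)$ side this is Lemma \ref{bridsonendslemma}$(2)$, and on the $\pi_0^{\text{Crs}}(T)$ side this is Proposition \ref{tree_crs_to_geo_ray} together with Corollary \ref{any_root}: every coarse path component contains a coarse ray close to an $\alpha^*$ of the form produced by Lemma \ref{every_crs_ray_close_to_plip_ray}, and Proposition \ref{tree_crs_to_geo_ray} then supplies a coarse $1$-path from $\alpha^*$ to some $v_0$-rooted geodesic ray underlying it.

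To argue injectivity, suppose $[\alpha]_0^{\text{Crs}}, [\alpha']_0^{\text{Crs}} \in \pi_0^{\text{Crs}}(T)$ satisfy
\begin{equation*}
\eta_T([\alpha]_0^{\text{Crs}}) = \eta_T([\alpha']_0^{\text{Crs}}).
\end{equation*}
Choose $v_0$-rooted geodesic rays $r, r' : \R_{\geq 0} \to T$ with $[r]_0^{\text{Crs}} = [\alpha]_0^{\text{Crs}}$ and $[r']_0^{\text{Crs}} = [\alpha']_0^{\text{Crs}}$, as above. Since $\eta_T$ is well-defined on coarse path components and sends a Lipschitz geodesic ray $r$ to $\text{end}(r^*) = \text{end}(r)$, the standing hypothesis yields $\text{end}(r) = \text{end}(r')$. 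Proposition \ref{geo_pi0_iff_equal} (or equivalently the rigidity statement for geodesic rays with the same end established just before it) then forces $r = r'$, hence $[\alpha]_0^{\text{Crs}} = [r]_0^{\text{Crs}} = [r']_0^{\text{Crs}} = [\alpha']_0^{\text{Crs}}$, as desired.

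I do not anticipate any serious obstacle in carrying this out: the bulk of the effort is already absorbed into Proposition \ref{tree_crs_to_geo_ray}, which provides the nontrivial step of straightening an arbitrary coarse ray to an underlying $v_0$-rooted geodesic ray inside a tree, and into the tree-rigidity results which show that distinct $v_0$-rooted geodesic rays have distinct ends and distinct coarse path components. The only minor points to be careful about are that when applying $\eta_T$ to a Lipschitz geodesic ray $r$ the continuous-representative construction $r \mapsto r^*$ returns (up to an end-preserving modification) $r$ itself, so that the equality of ends transfers cleanly, and that the choice of a common basepoint $v_0$ for representatives of both classes is legitimate by Corollary \ref{any_root} and the basepoint-freedom of ends from Lemma \ref{bridsonendslemma}.
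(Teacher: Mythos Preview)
Your proposal is correct and mirrors the paper's own argument: reduce to showing each component $\eta_T$ is bijective via Lemma \ref{leinster_nat_iso}, represent every coarse path component by a $v_0$-rooted geodesic ray using Proposition \ref{tree_crs_to_geo_ray}, and then invoke the rigidity of $v_0$-rooted geodesic rays in trees. One small point: the implication ``$\text{end}(r)=\text{end}(r')\Rightarrow r=r'$'' is precisely the unnamed proposition immediately preceding Proposition \ref{geo_pi0_iff_equal}, not Proposition \ref{geo_pi0_iff_equal} itself (which handles the $\pi_0^{\text{Crs}}$-side rigidity), so your parenthetical is the operative citation there.
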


\begin{proof}
	By Proposition \ref{tree_crs_to_geo_ray} each $\eta_T: \pi_0^{\text{Crs}}(T) \to \ends(T)$ in Theorem \ref{ends_natural_surjection} is an injection when restricted to any locally finite geometric tree $T$, and hence a natural isomorphism when restricted to the category $\textbf{lfGTree}^{\text{Crs}}$ by Lemma \ref{leinster_nat_iso}.
\end{proof}

In particular, the above Theorem can be extended to any subcategory of $\textbf{PGeo}^{\text{Crs}}$ whose objects have the coarse type (further coarse homotopy type) of a locally finite geometric tree.

Now, recall that a group is virtually \textit{blank} if it contains a subgroup of finite index with that property. For example, a group is \textbf{virtually free} if it contains the free group $F_n$, for some $n$, as a subgroup of finite index. Further, a finitely generated group is quasi-isometric (and hence coarse equivalent) to all of its finite index subgroups (see Corollary 8.47 of \cite{dructu2018geometric} for example). Then, we get the following corollary of Theorem \ref{tree_ends_pi0_thm}.

\begin{corollary}
	Let $\textbf{fgVirtFrGrp}^{\text{Crs}}$ be the category of Cayley graphs of finitely-generated virtually-free groups and coarse maps (seen as a subcategory of $\textbf{PGeo}^{\text{Crs}}$). Then, there exists a natural isomorphism $$\mathcal{E}\text{nds}(-)|_{\textbf{fgVirtFrGrp}^{\text{Crs}}} \cong \pi_0^{\text{Crs}}(-)|_{\textbf{fgVirtFrGrp}^{\text{Crs}}} : \textbf{fgVirtFrGrp}^{\text{Crs}} \to \textbf{Set}.$$
\end{corollary}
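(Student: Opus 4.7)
The plan is to reduce the statement to Theorem \ref{tree_ends_pi0_thm} by realising each Cayley graph of a finitely-generated virtually-free group as coarse equivalent to a locally finite geometric tree, and then chase the naturality square of the transformation $\eta : \pi_0^{\text{Crs}}(-) \Rightarrow \ends(-)$ provided by Theorem \ref{ends_natural_surjection}.

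First I would fix a finitely-generated virtually-free group $\Gamma$ with a finite generating set $S$, and choose a finite-index free subgroup $F_n \leq \Gamma$ equipped with a free generating set $S'$. The Cayley graph $\text{Cay}(F_n, S')$ is a $2n$-regular tree, and thus an object of $\textbf{lfGTree}^{\text{Crs}}$. The cited Corollary 8.47 of \cite{dructu2018geometric} together with Corollary \ref{qiiffcrseq} furnishes a coarse equivalence $f : \text{Cay}(\Gamma, S) \to \text{Cay}(F_n, S')$. By Lemma \ref{leinster_nat_iso} it then suffices to show that each component $\eta_{\text{Cay}(\Gamma, S)}$ is a bijection, which follows from the commutative naturality square
\begin{center}
\begin{tikzcd}
\pi_0^{\text{Crs}}(\text{Cay}(\Gamma, S)) \arrow[r, "f_*"] \arrow[d, "\eta_{\text{Cay}(\Gamma, S)}" swap] & \pi_0^{\text{Crs}}(\text{Cay}(F_n, S')) \arrow[d, "\eta_{\text{Cay}(F_n, S')}"] \\
\ends(\text{Cay}(\Gamma, S)) \arrow[r, "f_{\mathcal{E}}" swap] & \ends(\text{Cay}(F_n, S'))
\end{tikzcd}
\end{center}
since $f_{\mathcal{E}}$ is a bijection by Corollary \ref{ends_quasi_inv}, $f_*$ is a bijection because coarse equivalences are coarse homotopy equivalences (Example \ref{close_is_crs_hom}), and $\eta_{\text{Cay}(F_n, S')}$ is a bijection by Theorem \ref{tree_ends_pi0_thm}. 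Solving the square yields $\eta_{\text{Cay}(\Gamma, S)} = f_{\mathcal{E}}^{-1} \circ \eta_{\text{Cay}(F_n, S')} \circ f_*$, a composite of bijections.

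The only point that needs care — really the content of the remark immediately following Theorem \ref{tree_ends_pi0_thm} — is that the coarse equivalence of word metric spaces $|\Gamma|_S \simeq |F_n|_{S'}$ actually realises as a coarse equivalence of the corresponding Cayley graphs in $\textbf{PGeo}^{\text{Crs}}$, rather than merely as abstract metric spaces. This is standard: each vertex of $\text{Cay}(\Gamma, S)$ lies within a uniformly bounded distance of a coset representative of $F_n$, giving the required coarse map at the level of geodesic spaces. I do not anticipate any genuine obstacle beyond this routine check, as the work of the corollary has already been done in Theorem \ref{tree_ends_pi0_thm}; the statement here is essentially the observation that both $\ends(-)$ and $\pi_0^{\text{Crs}}(-)$ are coarse invariants and so the natural isomorphism over trees pulls back along any coarse equivalence.
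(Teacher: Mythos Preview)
Your proposal is correct and follows essentially the same approach as the paper: reduce to Theorem \ref{tree_ends_pi0_thm} via the coarse equivalence $\text{Cay}(\Gamma,S) \overset{\text{Crs}}{\simeq} \text{Cay}(F_n,S')$ coming from the finite-index free subgroup. Your version is in fact more careful than the paper's, which simply writes the chain $\pi_0^{\text{Crs}}(\Gamma) \cong \pi_0^{\text{Crs}}(F_n) \cong \ends(F_n) \cong \ends(\Gamma)$ without explicitly verifying that this composite is the component $\eta_\Gamma$; your use of the naturality square and Lemma \ref{leinster_nat_iso} makes that point precise.
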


\begin{proof}
	If $\Gamma$ is a finitely generated virtually free group, then it contains $F_n$ as a subgroup of finite index for some $n\in \Z_{\geq 1}$. Then $\Gamma \overset{\text{Crs}}{\simeq} F_n$ and $\pi_0^{\text{Crs}}(\Gamma) \cong \pi_0^{\text{Crs}}(F_n) \cong \ends(F_n) \cong \ends(\Gamma)$, since finitely generated free groups may be viewed as locally finite geometric trees. The result then follows from Theorem \ref{tree_ends_pi0_thm}.
\end{proof}

We have begun to answer the question of what suitable subcategory we may restrict to in order for the natural surjection $\pi_0^{\text{Crs}}(-) \twoheadrightarrow \ends(-)$ seen in Theorem \ref{ends_natural_surjection} to become a natural isomorphism. There is, however, more work to be done.

A key difference between how the constructions of ends and coarse path components identify rays is that, when computing the set of ends, we do not care about the `area' of the holes a space may posses. However, we saw in Theorem \ref{ends_neq_pi0} that, when computing $\pi_0^{\text{Crs}}$, holes with increasing diameter have an effect on the calculation. The problem appears when we begin to consider how one defines the `size' of a hole. 

A place to start could be the class of finitely-presented groups where the length of the relations should bound the area of the holes in the space. There are notions of areas that exist in the literature that may be useful here (for example that of the algebraic area of a word in Definition $7.93$ of \cite{dructu2018geometric}). More sophisticated techniques would likely be needed than those present in this paper, where we took a relatively brute-force approach in order to extend Theorem \ref{tree_ends_pi0_thm} to a more general class of spaces.

\subsubsection*{Acknowledgements}
The work presented here makes up part of a doctoral thesis. As such, many thanks go to Paul Mitchener as the supervisor of this project. His guidance, patience, and encouragement were invaluable for the completion of this work. Further, the author would like to thank Alex Corner, for his feedback on an early draft of this article.

%%%%%%%%%%%%%%%%%%%%%%%%%%%%%%%%%%%%%%%%%%%%%%%%%%%%%%%%%%%%%%%%%%%%%%%%%%%%%%%%%%%%%%%%%%%%%%%%%%%%%%%%%%%%%%%%%%%%%%%%%%%%%%%%%%%%%%%%%%%%%%%%%%%%%%%%%%%%%%%%%%%%%%%%%%%%%%%%%%%%%%%%%%%%%%%%%%%%%%%%%%%%%%%%%%%%%%%%%%%%%%%%%%%%%%%%%%%%%%%%%%%%%%%

\bibliographystyle{alpha}
\bibliography{ref_ends.bib}

\end{document}